\documentclass[11pt]{amsart}
\usepackage[utf8]{inputenc}     % pour l'encodage des caractères spéciaux
\usepackage[T1]{fontenc}    % encodage des polices 
\usepackage{lmodern}    % police de caractères Computer modern
\usepackage[english]{babel}  % gestion de plusieurs langues
\usepackage{csquotes}   % gestion des guillemets

\usepackage[dvipsnames]{xcolor}     % colorer le texte
\usepackage{comment}
\usepackage[a4paper]{geometry}   % modifier les marges

\usepackage{amsmath}     % environnement et commandes
\usepackage{amssymb}    % symboles
\usepackage{amsthm}      % environnements de théorèmes
\usepackage{mathtools}   % autres symboles

\makeatletter
\def\@tocline#1#2#3#4#5#6#7{\relax
	\ifnum #1>\c@tocdepth % then omit
	\else
	\par \addpenalty\@secpenalty\addvspace{#2}%
	\begingroup \hyphenpenalty\@M
	\@ifempty{#4}{%
		\@tempdima\csname r@tocindent\number#1\endcsname\relax
	}{%
		\@tempdima#4\relax
	}%
	\parindent\z@ \leftskip#3\relax \advance\leftskip\@tempdima\relax
	\rightskip\@pnumwidth plus4em \parfillskip-\@pnumwidth
	#5\leavevmode\hskip-\@tempdima
	\ifcase #1
	\or\or \hskip 1em \or \hskip 2em \else \hskip 3em \fi%
	#6\nobreak\relax
	\dotfill\hbox to\@pnumwidth{\@tocpagenum{#7}}\par
	\nobreak
	\endgroup
	\fi}
\makeatother
\theoremstyle{plain}    % pour les thm
\newtheorem{Thm}{Theorem}[subsection] % numéroté par section
\newtheorem{Lem}[Thm]{Lemma} % numéroté comme thm
\newtheorem{Prop}[Thm]{Proposition} % numéroté comme thm
\newtheorem{Cor}[Thm]{Corollary}

\theoremstyle{definition}
\newtheorem{Def}[Thm]{Definition} % numéroté comme thm
\newtheorem{Ex}[Thm]{Example}
\newtheorem{Not}[Thm]{Notation}

\theoremstyle{remark}
\newtheorem{Rq}[Thm]{Remark}

\newtheorem*{Conv}{Convention}

\newtheoremstyle{component}{}{}{}{}{\itshape}{.}{.5em}{\thmnote{#3}#1}
\theoremstyle{component}
\usepackage{enumitem} % Changer comportement des listes 

\usepackage[noadjust]{cite} 

\usepackage[
unicode=true,
pdfstartview=FitV,
colorlinks=true,
citecolor=Orange,
linkcolor=NavyBlue,
urlcolor=BlueViolet,
linktoc=page,
hyperindex=true,
pdfcreator={},
]{hyperref} % liens hypertexte

\usepackage{graphicx} % to import images
\graphicspath{{./Images/}}
\usepackage[hypcap=false]{caption}  % légender figures
\usepackage{subcaption}

\usepackage{tikz}   
\usepackage{tikz-cd}

\usetikzlibrary{arrows}

\tikzstyle{stuff_fill}=[rectangle,fill=white,minimum size=1em]

\tikzstyle{stuff_fillc}=[circle,fill=white,minimum size=1em]

\tikzstyle{stuff_fillg}=[rectangle,fill=black!10,minimum size=1em]

\tikzstyle{stuff_nofill}=[rectangle,draw]

\newcommand{\N}{\mathbf{N}} %entiers
\newcommand{\Z}{\mathbf{Z}} %entiers relatifs
\newcommand{\Q}{\mathbf{Q}} %rationnels
\newcommand{\R}{\mathbf{R}} %réels
\newcommand{\A}{\mathcal{A}} 
\renewcommand{\H}{\mathbf{H}} %espace hyperbolique
\renewcommand{\S}{\mathbf{S}}
\newcommand{\Sc}{\mathcal{S}}
\newcommand{\1}{\mathbf{1}}

\DeclareMathOperator{\Ad}{Ad}
\DeclareMathOperator{\ad}{ad}
\DeclareMathOperator{\Diag}{Diag}
\DeclareMathOperator{\Ho}{H} %Homologie

\newcommand{\defeq}{\vcentcolon=} %définition

\newcommand{\length}{|\!\cdot\!|}

\newcommand{\sg}[1][t]{\langle #1\rangle} %sous-groupe engendré par #1
\newcommand{\lb}{[} % Pour commutateurs de groupes : (\!(
\newcommand{\rb}{]} %

\newcommand{\lev}{\{} % Pour évaluer les mots en chemins : évaluation
\newcommand{\rev}[1][G]{\}_{#1}}
\DeclareMathOperator{\ev}{ev}

\newcommand{\overbar}[1]{\mkern 1.5mu\overline{\mkern-1.5mu#1\mkern-1.5mu}\mkern 1.5mu}

\newcommand{\n}{\mathfrak{n}}

\DeclareMathOperator{\Cone}{Cone}
\DeclareMathOperator{\Precone}{Precone}

\DeclareMathOperator{\SOLV}{SOL} %Groupe SOL
\newcommand{\SOL}[1][\alpha]{\SOLV_{#1}} %Sol_{\alpha} de poids 1,-\alpha
\newcommand{\Osc}[1][\alpha]{\mathrm{Osc}_{#1}}
\newcommand{\Heis}{\mathrm{Heis}}

\newcommand{\cone}[1][\SOLV_{\alpha}]{\Cone_{\omega}(#1)} % Cône de SOL_alpha
\newcommand{\Rexp}[1][G]{\mathrm{R}_{\exp} #1}

\newcommand{\Earr}{\mathrm{Earr}} %Boucles d'oreilles hawaiennes
\newcommand{\SqEarr}{{\mathrm{SqEarr}}} % Boucles d'oreilles carrées
\newcommand{\Cocoa}[1][k]{\mathcal{C}_{#1}} %Cabosse de cacao à $k+1$ branches

\hypersetup{ %Propriétés du PDF
	pdfauthor={Antoine Velut},
	pdftitle={Fundamental groups of asymptotic cones of Lie groups with the SOL obstruction},
	pdfsubject={},
	pdfkeywords={Asymptotic cones, Lie groups, Quasi-isometries, Dehn function, 
	Fundamental group, Hawaiian earring}
}

\title[Fundamental groups of asymptotic cones]{Fundamental groups of asymptotic cones of Lie groups with the SOL obstruction}
\author{Antoine Velut}
\address{Unité de Mathématiques Pures et Appliquées, UMR 5669 CNRS, 
	École normale supérieure de Lyon, 46 allée d’Italie, 69364 Lyon cedex 07, France}
\email{antoine.velut@ens-lyon.fr}
\date{\today}

\begin{document}

\begin{abstract}
	We study asymptotic cones of Lie groups, presenting a link between the geometry of the weights 
	and the highly non-simply-connected nature of asymptotic cones. We show that a Lie group 
	that admits a group of SOL-type as a quotient
	is such that the fundamental group of its asymptotic cone contains 
	the fundamental group of the Hawaiian earring space. We also obtain a strong non-vanishing 
	result for the first homology group of the asymptotic cone. 
	
	The assumption on the Lie group, introduced by Abels and called ``the SOL obstruction'', is 
	equivalent to an explicit geometric condition on the weights.
	Our work builds upon the results of Burillo, who proves the statement on asymptotic cones in 
	the case of SOL, and on those of Cornulier and Tessera who show that the SOL obstruction 
	implies exponential growth of the Dehn function.	
\end{abstract}

\maketitle

\tableofcontents

\section{Introduction}

\subsection{Motivations}

This paper deals with the large-scale geometry of Lie groups. More precisely, we are interested in computing quasi-isometry invariants of Lie groups. The quasi-isometry type plays here the role of homeomorphisms in topology, or diffeomorphisms in differential geometry.

We study one large-scale invariant in particular, the \emph{asymptotic cone}. Informally speaking, 
it is the metric space obtained when looking at a given metric space ``from infinitely far away''. 
It is bi-Lipschitz invariant under quasi-isometry, so one may use its usual topological invariants 
as large-scale invariants for the metric space being studied. More precisely, one associates to a 
metric space a collection of asymptotic cones which depend on a technical choice (of a 
non-principal \emph{ultrafilter}), to then study properties independent of this choice.

We will be mostly interested in the fundamental group of asymptotic cones. This has already 
proven to be a fruitful object of study, linked with another classical quasi-isometry invariant, 
the \emph{Dehn function}. In particular, having all asymptotic cones simply connected is a natural 
geometric property, in view of the following delightful interpretation not relying on asymptotic 
cones.

\begin{Thm}[{\cite{gromov1993asymptotic}}, see {\cite[Section 4]{drutu2002quasi-isometry}}] \label{thm:cone_dehn}
	Let $X$ be a geodesic metric space.  All asymptotic cones of $X$ are simply connected if and 
	only if $X$ satisfies the ``loop division property'', meaning roughly that every loop 
	in $X$ can be divided into a uniformly bounded number of loops of half its length.
	
	Whenever this holds, the Dehn function of $X$ grows at most polynomially. 
\end{Thm}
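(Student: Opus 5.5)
We argue in two halves. The first half is the equivalence: all asymptotic cones of $X$ are simply connected if and only if $X$ has the loop division property. The second half shows that loop division forces a polynomial Dehn function.

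Precisely, we take the loop division property to mean the following. There are constants $N \geq 1$ and $0<\lambda<1$ such that for every $\delta>0$ there is $L_0(\delta)$ with this property: every loop of length $L \geq L_0$ bounds a "diagram" in which it is decomposed into at most $N$ loops of length at most $\lambda L$, or into loops of length at most $\delta L$.

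\emph{Loop division implies simply connected cones.} Fix a loop $\gamma$ in $\Cone_\omega(X)$. Since the cone is geodesic, we may take $\gamma$ to be a geodesic polygon with finitely many vertices. We lift it to a sequence of loops $\gamma_n$ in $X$ of length $O(d_n)$, where $d_n$ is the scaling sequence. We then apply the division iteratively. After $k$ steps we obtain at most $N^k$ loops, each of length at most $\lambda^k$ times the original. Passing to the ultralimit, each stage gives a finite subdivision of $\gamma$ into loops of diameter at most $C\lambda^k$. The homotopy is the ultralimit of this nested family of subdivisions, parametrized on the disc by a dyadic-type filling. Continuity at infinitely fine scales holds because diameters shrink geometrically. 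This is the standard Papasoglu/Drutu argument: one builds the filling disc as a limit of finite simplicial approximations.

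\emph{Simply connected cones imply loop division.} We argue by contradiction. Suppose there are loops $\gamma_n$ of length $L_n \to \infty$ that cannot be divided into $N$ loops of half their length, for any fixed $N$. By a diagonal choice over $N$, we may assume the number of pieces needed tends to infinity. Rescaling by $L_n$, the ultralimit $\gamma_\omega$ is a loop of length at most $1$ in a cone. By hypothesis it bounds a continuous disc. By uniform continuity we triangulate that disc finely, so that each triangle has image of diameter below $1/10$. This triangulation has a fixed number $M$ of triangles. We pull the vertices back to $X$ for $\omega$-almost every $n$ and connect them by geodesics. This yields a division of $\gamma_n$ into $M$ loops of length at most roughly $(3/10)L_n + o(L_n) < L_n/2$, which is a contradiction.

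The main obstacle is uniformity over all cones. Simple connectivity must be assumed for every ultrafilter and scaling sequence, so the contradicting sequence can be realized inside some cone. I would handle this by choosing the ultrafilter to contain the index set where the failures occur.

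\emph{Polynomial Dehn function.} Given a loop of length $L$, we divide recursively $k = \log_2(L/L_0)$ times until all pieces have length at most $L_0$. This produces at most $N^{k} = (L/L_0)^{\log_2 N}$ pieces. Each piece has area bounded by a constant, because $X$ is (coarsely) simply connected at scale $L_0$, as quasi-isometric to a finitely presented or compactly presented group. The area is therefore $O(L^{\log_2 N})$, which is polynomial.
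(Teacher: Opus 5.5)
The paper does not prove this statement; it quotes it from Gromov and Drutu. Your contradiction argument for ``simply connected cones $\Rightarrow$ division'' is the standard one and is fine. It triangulates a null-homotopy finely in the cone rescaled by the lengths of the bad loops. Your $N^{\lceil \log_2(L/L_0)\rceil}$ count for the Dehn function is also standard and fine; as you note, it needs $X$ coarsely simply connected for the Dehn function to make sense.

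\textbf{The gap} is in ``division $\Rightarrow$ simply connected cones'', at the sentence ``since the cone is geodesic, we may take $\gamma$ to be a geodesic polygon''. Being geodesic does not allow this reduction. Take the Hawaiian earring with circles of circumference $2^{-\ell}$ and its length metric, which is a compact geodesic space. Each geodesic segment there lies in at most two circles, so no finite geodesic polygon is homotopic to the loop running once around every circle. Homotoping an arbitrary continuous loop to a polygon is precisely the local half of simple connectivity, and your lifting argument does not reach it. It only treats loops that are ultralimits of loops of length $O(d_n)$ in $X$. That excludes non-rectifiable loops, whose polygonal lifts have superlinear length, so the division gives no control. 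It also excludes geodesic polygons whose sides are not limits of geodesics of $X$.

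You need two further ingredients.
\begin{enumerate}
\item A quantitative filling. A division diagram of a loop of length $L$ is a connected graph of total length $O(NL)$. Iterating this shows that every ultralimit of loops of length $\le \ell d_n$ bounds a disc of diameter at most $C\ell$ in the cone.
\item An approximation step. Let $\gamma$ be continuous with modulus of continuity $\mu$, and let $\gamma_k$ be the polygon through the points $\gamma(j2^{-k})$, with sides that are limits of geodesics of $X$ between fixed lifts. Consecutive $\gamma_k,\gamma_{k+1}$ differ by $2^k$ limit triangles of perimeter at most $3\mu(2^{-k})$, which you fill using (a). Gluing these annuli onto a filling of $\gamma_1$ gives a map of $D^2$. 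It is continuous at $\partial D^2$ because $\mu(2^{-k})\to 0$.
\end{enumerate}
Even (a) needs more than ``diameters shrink geometrically''. To take ultralimits of the stage-$k$ diagrams you must bound their combinatorial type, for example by Euler characteristic after deleting vertices of degree at most $2$. To assemble a continuous disc you must control the faces in the domain, not only their images.

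Finally, your formulation of the loop division property is wrong. The clause ``or into loops of length at most $\delta L$'' puts no bound on the number of loops. It therefore holds in every compactly presented group: take a van Kampen diagram with cells of bounded perimeter. In particular it holds in $\SOL$, whose cones are not simply connected, so with that clause the equivalence would be false. Drop it. The property is that there exist $N$ and $L_0$ such that every loop of length $L\ge L_0$ splits into at most $N$ loops of length at most $L/2$ (any fixed $\lambda<1$ gives an equivalent property). This is the form you actually use.
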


We study here a class of groups that is known to have exponential Dehn function, namely groups with 
the \emph{SOL obstruction}. For real Lie groups, it is the property of having a group $\SOL$ as a 
quotient for some $\alpha>0$, where $\SOL=\R^2\rtimes_{(1,-\alpha)} \R$ is the semidirect product 
with law given by 
\[
(x,y,t).(x',y',t') = (x+e^{t}x', y+e^{-\alpha t}y', t+t').
\]
The unimodular group $\SOL[1]$ is more commonly called $\SOLV$.

The SOL obstruction was first introduced by Abels in \cite{abels1987finite}, using an equivalent 
condition involving weigths of the Lie algebra, as an obstruction for 
$p$-adic Lie groups to be compactly presented. In \cite{cornulier2017geometric}, 
Cornulier and Tessera restate this condition as having $\SOL$ as a quotient, and show that 
for real Lie groups (which are always compactly presented) this property implies exponential growth 
of the Dehn function. 

This implies, in general,  
non-simple-connectedness of \emph{at least one} asymptotic cone. However, this does not apply to 
\emph{all} asymptotic cones, and it does not give any more information on the fundamental groups of 
the asymptotic cones. For example, the results of \cite{cornulier2017geometric} do not tell whether 
these fundamental  groups are abelian or not.
\smallskip

In the case of the classical group $\SOLV$, Burillo shows in \cite{burillo1999dimension}  that the 
fundamental group of the asymptotic cone 
is ``highly non-trivial'', meaning here that it contains the fundamental group of the Hawaiian earring. In particular, it is neither abelian nor free.

\subsection{Main result}
To state our results, we need two ingredients: groups to study, and topological spaces to exhibit large fundamental groups.

First, the topological space. We denote by $\Earr$ the \emph{Hawaiian earring space}, meaning the 
union of countably many circles that are all tangent at the same point and whose radii tend to 
zero. It can also be constructed as the one-point compactification of the product $\R\times \N$. 
This 
space will be our source of pathological fundamental groups. Indeed, its fundamental group 
$\pi_1(\Earr)$
is uncountable, contains a free group over countably many generators, and is not free; 
see Section \ref{subsec:hawaiian_earring} for statements and references. 
\smallskip

Let us present now the class of groups we study.
The following definition appears for instance in \cite[Lemma 2.5]{cornulier_dimension_2008}.

\begin{Def}
	A \emph{real triangulable group} is a Lie group isomorphic to a closed connected group of real triangular matrices. 
	Equivalently, it is a simply connected solvable group 
	such that the spectrum of the adjoint action of every element is purely real.
\end{Def}

For a proof of the equivalence, see \cite[Lemma 3.1]{conversano2018solvable}\footnote{This reference proves the equivalence in the case of Lie algebras. To prove this for Lie groups, use the following two ingredients. First, the simple connectedness of the group which implies that Lie algebra morphisms integrate at the level of groups. Second, the relationship $\exp \circ \ad= \Ad \circ \exp_G $ which shows that the condition that the adjoint spectrum is purely real is the same at the group and at the algebra level.}. For example, the group $\SOL$ is real triangulable.

Considering only this class of Lie groups is not really a restriction, since every connected Lie group is quasi-isometric to a real triangulable group; see for instance \cite[Lemma 3.A.1]{cornulier2017geometric}.

Our main result, to which this paper is devoted, is the following. 
\begin{Thm} \label{thm:main}
	Let $G$ be a real triangulable Lie group. Assume $G$ has the \emph{SOL obstruction}, meaning it admits $\SOL$ as a quotient for some $\alpha>0$. 
	Then for every non-principal ultrafilter $\omega$, the fundamental group of $\cone[G]$ contains 
	the group $\pi_1(\Earr)$. In particular, it is uncountable, nonabelian and nonfree.
\end{Thm}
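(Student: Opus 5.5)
The plan is to reduce to a statement about loops in $G$ itself and then follow Burillo's strategy for $\SOLV$. Let $\pi\colon G\to\SOL$ be the quotient map. Since $\pi$ is a Lipschitz surjective homomorphism (after choosing left-invariant Riemannian metrics), it induces a Lipschitz map $\cone[G]\to\cone[\SOL]$. To show that $\pi_1(\cone[G])$ contains $\pi_1(\Earr)$, I will construct an embedding $\Earr\hookrightarrow\cone[G]$ that is $\pi_1$-injective. The first step is to build, inside $G$, a family of loops $\gamma_n$ of length comparable to $n$. Each $\gamma_n$ should be the lift of a standard ``SOL-type'' loop: a commutator-shaped quadrilateral using a one-parameter subgroup projecting to the $\R$-factor of $\SOL$ together with the two root directions of weights $1$ and $-\alpha$. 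By the weight characterization of the SOL obstruction (two weights that are negatively proportional on a suitable element), these directions can be lifted to $G$. The lifts are horocyclic segments $\exp(sX)$ of exponential length in $s$ but logarithmic word length, so the loop closes up in $G$ with length $O(n)$.

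In the cone, a sequence of scales $\lambda_k\to\infty$ (chosen in a fixed way, so that it works for every $\omega$), each with a loop of size proportional to $\lambda_k/\lambda$, gives countably many loops $c_k$ in $\cone[G]$. These loops are based at the same point and have diameters tending to zero. Concatenating them along the Hawaiian earring pattern defines a continuous map $f\colon\Earr\to\cone[G]$; continuity at the wedge point follows because the diameters tend to zero. Alternatively, I can take a single sequence of loops parametrized by a ratio $r\in(0,1]$ and realize loops of every size $r$ in the cone, choosing $r_k=2^{-k}$.

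The key step is injectivity of $f_*$. Here I compose with $\pi$ and use Burillo's result for $\SOL$, after checking that his argument for $\SOLV$ extends verbatim to $\SOL$ with $\alpha\neq1$. His argument shows that the images of the $c_k$ in $\cone[\SOL]$ form a $\pi_1$-injective Hawaiian earring. Since $f_*$ followed by $\pi_*$ is then injective, $f_*$ is injective, which proves the theorem. Uncountability, non-commutativity and non-freeness are then inherited from the known properties of $\pi_1(\Earr)$ recalled in Section \ref{subsec:hawaiian_earring}.

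The main obstacle is the $\SOL$ case with general $\alpha$, that is, adapting Burillo's proof. In $\cone[\SOL]$ the natural tool is the height map to $\R$, whose fibers are $\R$-trees: the asymptotic cone of $\SOL$ sits inside a product of two $\R$-trees as a horocyclic product. I would show that a homotopy of a loop $c_k$ must cross a separating configuration, namely a branching point of the trees at the correct height. I would then use an algebraic-intersection or retraction argument onto a tree to define, for each $k$, a homomorphism from $\pi_1(\cone[\SOL])$ to a free group detecting $c_k$. Combining these maps over all $k$ yields a homomorphism to the inverse limit of free groups, into which $\pi_1(\Earr)$ embeds, and this gives injectivity.
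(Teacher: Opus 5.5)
Your proposal breaks at the first step. The claim that the lifted commutator quadrilateral ``closes up in $G$'' is false in general, and your length estimate does not imply it. Lifting $x,y$ to $\tilde x,\tilde y$ in the exponential radical $H$, or to root vectors of weights $1$ and $-\alpha$, does not make them commute, because $H$ is only nilpotent.

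Take the oscillator group $\mathrm{Osc}_\alpha=\mathrm{Heis}_3\rtimes\R$ with $[X,Y]=Z$. The kernel $\exp(\R Z)$ of $G\to\SOL$ is central in $H=\mathrm{Heis}_3$. Hence for every choice of lifts you get $[\tilde t^n\tilde x\tilde t^{-n},\tilde y]=\exp(e^nZ)$, an element at distance $\simeq n$ from $1$. So the lifted path has distinct endpoints even in $\cone[G]$, and you have no loop $c_k$ to which Burillo's argument could apply. Your argument only works when commuting lifts exist, e.g. for split extensions $H\rtimes\SOL$, where the theorem already follows from Burillo via a retraction.

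Repairing this needs two further ideas, and this is where the paper's work lies.

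\emph{Closing the loops.} Replace the commutator by the iterated commutator $\Theta_k=[\dots[[U,Y],Y],\dots,Y]$, with $k$ the nilpotency class of $H$. Its evaluation on $U=\tilde t^n\tilde x\tilde t^{-n}$, $Y=\tilde y$ has endpoint in $\mathrm{C}^{k+1}H=\{1\}$, so it is a loop in $G$.

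\emph{Injectivity for non-embedded loops.} The projections $\theta_k$ of these loops to $\cone$ are no longer embedded circles: for $k\ge 2$ they retrace arcs and fill a cocoa pod $\Cocoa$. So neither Burillo's loops nor his embedded-earring argument covers them. The paper handles this in three steps:
\begin{enumerate}
\item It rewrites $\Theta_k=w_k(A_0,\dots,A_{k-1})$ as a positive word in conjugates $A_i$ of $\Theta_1^{\pm1}$.
\item It proves that the arcs $b_0,\dots,b_k$ give a bi-Lipschitz embedding of $\Cocoa$ into $\cone$, and, across scales, of square earrings.
\item It shows that the induced map on $\pi_1$ sends each circle generator to a nontrivial positive word on its own disjoint set of free generators. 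Such a map is injective on the inverse limit of free groups.
\end{enumerate}
Only after this factorization do Theorem~\ref{thm:dim_cone_SOL} and Corollary~\ref{cor:embedded_covdim_one} yield $\pi_1$-injectivity.
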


Many groups admit the SOL obstruction, and for real triangulable Lie groups it is equivalent to an 
explicit condition on the weights; see Example \ref{ex:groups_SOL_obstruction}.
The above theorem is a generalization of Corollary 22 in \cite{burillo1999dimension}, which states 
this result in the case $G=\SOLV$. 
In this case, Burillo considers large loops representing commutators. In our general setting, we 
need to modify these because of the more complicated algebraic structure, by, roughly speaking, 
considering suitable iterated commutators.
To achieve this, we show that the combinatorial loops defined in \cite[Section 
12]{cornulier2017geometric}, which lift to groups with the SOL obstruction, provide subspaces 
homotopically equivalent to wedges of circles inside 
the asymptotic cone of $\SOL$. 
Another difficulty not encountered in the case $G=\SOLV$ is that the loops need not be embeddings 
themselves, hence the need to study the combinatorics of these iterated commutators to ensure that 
they remain homotopically injective.
This allows us to produce a nontrivial loop in the asymptotic cone.
We then produce similar loops at smaller and smaller scales, 
so as to produce Hawaiian earring spaces inside these cones.

Moreover, we obtain a similar result of strong non-vanishing for homology.

\begin{Thm} 
	\label{thm:main_homology}
	Let $G$ be a real triangulable Lie group with the SOL obstruction. 
	Then for every non-principal ultrafilter $\omega$, the first homology group 
	$\Ho_1(\cone[G])$ 
	contains the Baer-Specker group $\Z^\N$. 
	In particular, $\Ho_1(\cone[G],\Q)$ has continuum dimension over $\Q$.
\end{Thm}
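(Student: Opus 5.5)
The plan is to deduce Theorem~\ref{thm:main_homology} from the same construction that proves Theorem~\ref{thm:main}. That construction gives a Hawaiian earring inside $\cone[G]$ whose inclusion is injective on $\pi_1$. What we need here is stronger: the map $\Ho_1(\Earr)\to \Ho_1(\cone[G])$ should be injective on a suitable subgroup. Recall that $\Ho_1(\Earr)$ surjects onto $\Z^\N$ via the winding numbers $w_n$ around each circle $C_n$. For a sequence $a=(a_n)\in\Z^\N$, one can realise a loop $\gamma_a$ in $\Earr$ running $a_n$ times around $C_n$ in order; this makes sense because the circles shrink. The map $a\mapsto[\gamma_a]$ is then a homomorphism $\Z^\N\to \Ho_1(\Earr)$, since concatenations and reorderings agree in homology by the standard shuffling argument. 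The goal is to show that its composite with the inclusion into $\Ho_1(\cone[G])$ is injective.

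For this I will build, for each $n$, a ``detecting'' map out of the cone. First reduce to $\SOL$: the quotient map $G\to\SOL$ is Lipschitz and induces a continuous map $\cone[G]\to\cone[\SOL]$, so it is enough to detect classes in $\Ho_1(\cone[\SOL])$. The lifted combinatorial loops of \cite[Section 12]{cornulier2017geometric} are built at scales $r_n\to 0$. In the proof of Theorem~\ref{thm:main} their images are shown to span subspaces homotopy equivalent to wedges of circles. For each $n$ I want a continuous map $f_n\colon \cone[\SOL]\to S^1$, or a map to a graph followed by a homomorphism to $\Z$, such that the following holds: $f_n$ has degree $\pm 1$ on the $n$-th loop, is null-homotopic on the $m$-th loop for $m\neq n$, and is constant near the base point on the small loops with $m>n$. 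With such maps, $(f_n)_*[\gamma_a]=\pm a_n$, which gives injectivity, and by additivity the image is a copy of $\Z^\N$.

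The natural candidate for $f_n$ comes from the geometry of $\cone[\SOL]$. It is a subset of the product of two real trees (the cones of the two hyperbolic planes $\R\rtimes\R$), and the Burillo-type loops encircle ``holes'' coming from pairs of branches. I would take $f_n$ to be a composite of projections to the tree factors, followed by retractions onto the finite subtrees spanned by the branches used at scale $r_n$, and then a map from the complement to a circle. In the simplest version the $n$-th loop crosses a chosen branch point pair exactly once transversally, and $f_n$ records this crossing as an intersection number with a ``wall''. The main obstacle will be making this continuous and well defined on the whole cone. In particular, I need to check that the iterated-commutator loops at other scales cross each wall algebraically zero times; this is exactly the combinatorial cancellation of iterated commutators that gave homotopical injectivity in Theorem~\ref{thm:main}. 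Here it has to be redone at the level of abelian invariants. If that fails for the iterated commutators, I would instead replace each loop by one whose winding in a single transverse coordinate is an explicit nonzero integer $k_n$. This still yields an injective map from $\Z^\N$, which is the main claim. The statement about $\Ho_1(\cone[G],\Q)$ then follows by tensoring with $\Q$, since $\Z^\N\otimes\Q$ has continuum dimension.
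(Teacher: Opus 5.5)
Your key step, the existence of the detecting maps $f_n\colon\cone\to\S^1$, is not established, and the route you sketch does not obviously work. A single tree factor is contractible, so projecting to it kills $\Ho_1$. In a product of two finite subtrees (again contractible) you would need the image of the \emph{whole} cone to miss the ``hole'' that the $n$-th loop goes around. Nearest-point retractions do not respect the horocyclic relation, and nothing in the proposal controls this.

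The mechanism you invoke for the other scales (``combinatorial cancellation of iterated commutators'') is also not what happens. At scale $1/\ell$ the loop is the \emph{positive} word $\nu(c_1^{(\ell)},\dots,c_k^{(\ell)})$ in the generators of its cocoa pod, so nothing cancels. The separation between scales is a metric fact.

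You are missing the two ingredients the paper uses:
(a) the estimates of Proposition~\ref{prop:arcs_bj_change_of_scale_embedding_Earr}, showing that the arcs $b_j^{(\ell)}$, $0\le j\le k$, $\ell\ge 1$, form a bi-Lipschitz ``earring of cocoa pods'' homotopy equivalent to $\Earr^{\vee k}$, with $\Phi=\beta\circ W$ as in Proposition~\ref{prop:image_of_rescaled_loops};
(b) the fact that $\cone$ has covering dimension $1$ (Theorem~\ref{thm:dim_cone_SOL}), so maps from the compact, hence closed, set $\Phi(\Earr)$ to $\S^1$ or $\Earr$ extend to the whole cone (Theorem~\ref{thm:extension_towards_Earr_dim1}).

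With these no walls are needed. Compose a homotopy equivalence $\Phi(\Earr)\to\Earr^{\vee k}$ with the collapse onto the $k$ circles of index $n$, send each of them to $\S^1$ with degree $1$, and extend. Then $(f_n\circ\Phi)_*=L\,\sigma_n$ on all of $\Ho_1(\Earr)$, where $L>0$ is the length of $\nu$ and $\sigma_n$ is the winding number around the $n$-th circle. The paper packages the same content differently: $\beta$ is a retract up to homotopy (Corollary~\ref{cor:embedded_covdim_one}), and $W_*$ acts through winding numbers as a matrix with nonzero, disjointly supported columns.

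Your final step is false as stated: $a\mapsto[\gamma_a]$ is not additive, because shuffling only works for finitely many factors. Take $a=(1,0,1,0,\dots)$ and $b=(0,1,0,1,\dots)$. The image of $\gamma_a\gamma_b\gamma_{a+b}^{-1}$ in $F_{2N}$ is the quadratic word $(c_1c_3\cdots c_{2N-1})(c_2c_4\cdots c_{2N})(c_1c_2\cdots c_{2N})^{-1}$. Gluing the paired sides of its polygon gives a closed surface of genus $N-1$, and for quadratic words this genus equals the commutator length. Since the retractions $\pi_1(\Earr)\to F_{2N}$ are homomorphisms, $\gamma_a\gamma_b\gamma_{a+b}^{-1}$ is not a finite product of commutators. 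Hence $[\gamma_a]+[\gamma_b]\neq[\gamma_{a+b}]$ in $\Ho_1(\Earr)$, and ``by additivity the image is a copy of $\Z^\N$'' does not follow. The section $s$ written in the paper is given by the same formula, so it needs the same repair.

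What you actually obtain is $F\circ\Phi_*=L\,\sigma$ with $F=((f_n)_*)_{n\ge1}\colon\Ho_1(\cone)\to\Z^\N$. So $\Phi_*$ (and $\widetilde{\Phi}_*$) is injective on every subgroup of $\Ho_1(\Earr)$ on which $\sigma$ is injective. To conclude you need such a subgroup isomorphic to $\Z^\N$, that is, a homomorphic splitting of $\sigma$. This is a genuine input about $\Ho_1(\Earr)$ (cf.\ \cite{eda2000singular}), not a consequence of naive infinite products. The statement over $\Q$ survives without it, since $F\otimes\Q$ is onto $\Z^\N\otimes\Q$, which has continuum dimension.
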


The case $G=\SOL$, while not explicitly stated, can be deduced from \cite{burillo1999dimension}. 
The general SOL obstruction case, however, is new.

\begin{Rq}
	Note that it is shown in \cite[Section 12]{cornulier2017geometric} that the loops used in this paper have exponential area. This is the key to one of the results of Cornulier and Tessera which states that groups with the SOL obstruction have an exponential Dehn function. 
	
	Thus Theorem \ref{thm:main} establishes yet another link between exponential growth of the area and non-contractibility in the asymptotic cone.
	Indeed, recall that by Theorem \ref{thm:cone_dehn}, a connected Lie group whose asymptotic 
	cones are simply connected has a polynomial Dehn function. 
	In view of the results of \cite{cornulier2017geometric}, every group with the SOL obstruction 
	has some asymptotic cone which is not simply connected. Our result can be seen as an 
	improvement of this statement for two reasons. 
	First, it holds for all ultrafilters. 
	And second, the fundamental group is not only nontrivial but even ``large'', in the sense that 
	it is uncountable and contains a nonabelian free group.
	
	\begin{Rq}
		\label{rq:obstruction_homologique}
		Beware that exponential growth of the Dehn function does not always imply the existence of 
		nonabelian free subgroups in the fundamental group of the asymptotic cone. 
		Hence Theorem \ref{thm:main} is not a direct 
		consequence of \cite[Section 12]{cornulier2017geometric}.
		
		Indeed, in \cite{abels1987finite}, Abels introduces not only the SOL obstruction, but also 
		the \emph{homological obstruction}. It is proven in \cite{cornulier2017geometric} that for 
		real Lie groups, it also implies exponential growth of the Dehn function. 
		Hence groups with the homological obstruction have non-simply-connected cones as well. 
		However, 
		a quotient of Abel's first group, considered in \cite[Cor. 
		1.4]{cornulier2013dehn}, has 
		the homological obstruction but is such that the fundamental group of its asymptotic cone 
		is nontrivial abelian.
	\end{Rq}
\end{Rq}

\subsection{Sketch of proof}

Let us give an overview of the proof of Theorem \ref{thm:main}. 

We are given a real triangulable Lie group $G$ and, for some $\alpha>0$, 
a surjective homomorphism $p:G \to \SOL$. This induces a continuous surjection 
\[
	p_{\omega} : \cone[G] \to \cone
\] 
between asymptotic cones. 

Let $\lb u,v \rb =uvu^{-1}v^{-1}$ denote the group commutator. Roughly speaking, the loops 
considered by Burillo in $\SOLV$ have the form $\lb t^nxt^{-n},y \rb$, where $x,y,t$ are the basis 
vectors. These words are usually not relations in $G$, having in mind that the exponential radical 
might fail to be abelian (see Section \ref{subsec:exp_rad}). 
So we use $k$-fold iterated commutators of the 
form $\lb \dots \lb t^nxt^{-n},y\rb , \dots,y]$
as introduced in \cite[Section 12]{cornulier2017geometric}. 
Indeed, for some large enough $k$, fixed and depending only on $G$, 
these are relations in $G$. 

Following Cornulier-Tessera, we take a combinatorial point of view
and define loops using the words appearing in iterated commutators, defined in Section 
\ref{sec:setup}, evaluated on 
suitable group elements. This framework allows for a clear distinction between geometric and 
combinatorial arguments. So we start in Section \ref{sec:combinatorial} by gathering some 
combinatorial properties of these words.

We then evaluate these words into loops in Section \ref{sec:geom_loops}. 
We build a loop inside the asymptotic cone 
$\cone$ by considering a sequence of loops of linear length in $\SOL$ and 
taking the ultralimit. 
We then want to ensure that the lifts of these loops to $G$ remain loops, so in Section 
\ref{sec:higher_order_loops} we use the nilpotency 
of the exponential radical of $G$ and iterate commutators up to a fixed rank.

The difficulty then is that there is no classification of general solvable Lie groups, 
so it is not obvious to study the geometry of such loops in $G$. 
This is where the quotient group $\SOL$ is of use, 
as we are able to describe the metric properties of the base loops inside its asymptotic cone. 

While iterating commutators is crucial for the lifts to still be loops, it creates overlaps inside 
the base loops, preventing them from being embedded inside $\cone$. However, careful 
computation shows that the image of each of the base loops is a space that we call 
``cocoa pod'', which is homotopically equivalent to a finite wedge of circles.
We therefore need an extra step of factorization through the cocoa 
pod of the maps at the $\SOL$-level, 
in order to decompose them into a map that contains all the overlap and an actual bi-Lipschitz 
embedding into $\cone$. The key is that the overlap map sends generators of the fundamental group 
to \emph{positive} words over a well-chosen alphabet, so it remains $\pi_1$-injective.

Once this is done, in Section \ref{sec:change_scale} we iterate this construction with sequences of 
loops of length approximately $n, 
\frac{n}{2},\frac{n}{3},\dots$ and so on. This gives rise to a sequence of loops in $\cone[G]$ 
whose lengths tend to zero, yielding a map $\Phi : \Earr \to \cone$ that lifts to a map
$\widetilde{\Phi} :\Earr\to \cone[G]$ satisfying $\Phi = p_{\omega} \circ \widetilde{\Phi}$.

Again, the change-of-scale creates overlaps, and we show that the subspace drawn by 
the loops in the asymptotic cone is a ``cocoa pod of square earrings'', which is homotopically 
equivalent to the Hawaiian earrings.
So we factor the map $\Phi$ from $\Earr$ to $\cone$ 
through this metric space to regain an embedding. 
Here, showing that the map containing all the overlap is $\pi_1$-injective uses again the
positivity of the image of generators, as well as an explicit 
description of $\pi_1(\Earr)$ as a subgroup of an inverse limit of free groups.

To put the final nail in the coffin, we apply two results from \cite{burillo1999dimension} stating 
that $\cone$ has covering dimension equal to $1$ (note that this is not the case for 
$\cone[G]$ in general), and that Hawaiian earrings in spaces of dimension $1$ are homotopically 
embedded. This entails $\pi_1$-injectivity of the map $\Phi:\Earr \to \cone[G]$, hence of its lift 
$\Tilde{\Phi} : \Earr \to \cone[G]$; whence the theorem.

\subsection{Outline of the paper}
Section \ref{sec:preliminaries} recalls the algebraic and geometric background used in this work, 
including length estimates in $\SOL$. 
The groundwork being laid, we give in Section \ref{sec:setup} a precise statement of the main 
result, namely Theorem \ref{thm:technical_main}, which implies Theorem \ref{thm:main}. The 
rest of the paper is then devoted to the proof of Theorem \ref{thm:technical_main}, up to the end 
of Section \ref{sec:change_scale} which deals with Theorem \ref{thm:main_homology}. 

In Section \ref{sec:combinatorial}, we gather the results we need concerning the purely 
combinatorial properties of the words arising as iterated commutators. 
We then explain in Section \ref{sec:geom_loops} how to construct loops inside asymptotic cones by 
evaluating these words and taking ultralimits, and show that the first loop obtained is an 
embedding of the circle into $\cone$.

Next, we study in Section \ref{sec:higher_order_loops} loops coming from higher order commutators, 
which are no longer embedded in the asymptotic cone but yield spaces homotopically 
equivalent to finite wedge of circles.
Finally, Section \ref{sec:change_scale} deals with changing the scale of the loops in order to 
construct (subspaces homotopically equivalent to) Hawaiian earrings.
We obtain $\pi_1$-injectivity of the map to $\cone$, as well as $\Ho_1$-injectivity on a subgroup 
isomorphic to $\Z^\N$, completing the proof.

\subsection{Open questions}
\label{sec:open_questions}
We show that the SOL obstruction implies the existence of nonabelian free groups inside the 
fundamental group of the asymptotic cones; a natural question is then the following. Let $G$ be a 
Lie group without the SOL obstruction. Is it true that for every non-principal ultrafilter 
$\omega$, the fundamental group $\pi_1(\cone[G])$ is abelian?
And is this fundamental group nontrivial if  $G$ satisfies the homological obstruction (see Remark 
\ref{rq:obstruction_homologique})?
\smallskip

Moreover, the SOL obstruction can be restated, in the case of a real 
triangulable group $G$, as the property that the origin lies in the segment joining two weights of 
$H/[H,H]$, where $H$ is the exponential radical of $G$ 
(see Remark \ref{rq:SOL_obstruction_with_weights}). 
Similarly, define the ``$k$-SOL obstruction'' by asking that zero lies in the convex hull 
of $k+1$ weights of $H/[H,H]$, where $k\geq 1$; this is the negation of being $(k+1)$-tame as 
defined 
in \cite{cornulier2017geometric}.  
A natural generalization of the questions studied here is as follows: if $k$ is minimal such that 
$G$ has the $k$-SOL obstruction, is it true that the homotopy and homology groups of degree $k$ of 
$\cone[G]$ are ``very large'', say having continuum cardinality? 
Also, does the higher filling function of degree $k$ of $G$ have exponential growth?

\subsection{Acknowledgements}
This work is part of my PhD thesis under the supervision of Yves Cornulier and Bertrand Rémy.
I would like to express my deepest thanks to Yves Cornulier for introducing me to this delightful 
topic and for guiding me along the way.
I also thank Bertrand Rémy for his invaluable day-to-day support, and for many insightful 
discussions. Many thanks to Gabriel Pallier for introducing me to Figure \ref{fig:metric_view_SOL} 
and for discussions on the open questions.
I am grateful to the Laboratoire Jean Leray of Nantes Université for welcoming me during the start 
of this work. 

This work has been done entirely by myself, without the help of any large langage model or other 
artificial intelligence tools.

\section{Preliminaries} \label{sec:preliminaries}

\subsection{Asymptotic geometry of metric spaces} 
We describe here the background of our study. We study metric spaces under the lens of quasi-isometry, which we interpret as large-scale geometry.

Given a metric space $(X,d)$ and a positive scalar $\lambda$, we denote by $\lambda X$ the 
rescaled metric space $(X,\lambda d)$.

\subsubsection{Rough comparison}
Let us introduce here some notation for rough comparison, which will be of great use to state 
estimates of lengths and distances.

\begin{Def}[Rough comparison] \label{def:rough_comparison}
	Let $f, g :X \to \R_+$ be nonnegative functions on a set $X$. We write $f\preceq g$ and say 
	that $f$ is \emph{roughly bounded by $g$} if there exists a constant $C>0$ such that 
	\[
	f(x) \leq Cg(x)+C,~ \forall x \in X.
	\]
	We write $f \simeq g$ and say that $f$ and $g$ are \emph{roughly equal }whenever $f \preceq g$ 
	and $g \preceq f$.
\end{Def}

To simplify writing, we write (abusively) $f(x) \simeq g(x)$ to denote $f \simeq g$. For example, 
if $x,y \in \R_+$, then $\max(x,y) \simeq x+y$;  here the set $X$ is $\R_+ \times \R_+$.

\subsubsection{Quasi-isometries}
Let us now recall the following notion of large-scale equivalence between metric spaces.

\begin{Def} 
	Let $(X,d_X)$ and $(Y,d_Y)$ be metric spaces. A map $f:X\to Y$ is called a 
	\emph{quasi-isometry} if there exists a constant $C$ such that the following holds:
	\begin{enumerate}
		\item $\frac{1}{C} d_X(x_1,x_2) - C \leq d_Y(f(x_1),f(x_2)) \leq Cd_X(x_1,x_2) +C, ~ 
		\forall x_1,x_2 \in X ;$
		\item $\forall y \in Y, ~ \exists x \in X, ~ d_Y(y,f(x))\leq C.$ 
	\end{enumerate}
	If only Item $(i)$ is satisfied, meaning that $d_Y(f(x_1),f(x_2)) \simeq d_X(x_1,x_2)$, we say 
	that $f$ is a \emph{quasi-isometric embedding.} 
	If only the right-hand inequality holds, meaning that $d_Y(f(x_1),f(x_2)) \preceq 
	d_X(x_1,x_2)$, we say that $f$ is \emph{quasi-Lipschitz}. A quasi-isometric embedding of a 
	segment is called a \emph{quasi-geodesic segment}.
\end{Def}

The context of this paper is the study of \emph{quasi-isometry invariants of Lie groups}. There are 
many ways to endow a connected Lie group with a metric, using either a left-invariant Riemannian 
metric or a 
word metric coming from a compact generating subset. The  key point, which illustrates the 
flexibility of quasi-isometry, is that two choices of such metrics on a given Lie group yield 
quasi-isometric metric spaces by the Milnor-Schwarz lemma (see \cite[Theorem 
4.C.5]{cornulier2016metric}). Hence {Lie groups have a well-defined quasi-isometry type}.

\subsubsection{Asymptotic cones} 

Let us introduce here our main object of study, namely asymptotic cones. We are given a metric 
space $(X,d)$ whose large-scale geometry we want to study. To do this, we ``zoom out'' of $X$ by 
considering the sequence of rescaled metric spaces 
$(\frac{1}{n}X)_{n\in \N}=((X,\frac{d}{n}))_{n\in 
\N}$ and define the asymptotic cone of $X$ to be a ``limit'' of this sequence, in a sense described 
below. For more on asymptotic cones, see for example \cite{drutu2002quasi-isometry} or 
\cite[I.5]{bridson1999metric}. On the specific case of asymptotic cones of Lie groups, see 
\cite{cornulier2011asymptotic}.

Let us introduce the technical tool we use to take limits of metric spaces. By an 
\emph{ultrafilter} we mean a finitely additive probability measure $\omega$ on $\N$ taking values 
in $\{0,1\}$. It is \emph{non-principal} if it gives measure zero to all finite subsets of $\N$.
Their existence is ensured by Zorn's lemma, since non-principal ultrafilters are precisely maximal 
filters containing the Fréchet filter of co-finite sets (see \cite[I.5.48]{bridson1999metric}). 
Ultrafilters, working as ``universal extracting functions'', allow to make \emph{all} sequences in 
compact sets converge at the same time, in a consistent manner. 

\begin{Lem}[{see \cite[Lemma I.5.49]{bridson1999metric}}] 
	Let $(x_n)$ be a sequence taking values in a compact Hausdorff metric space $X$ and $\omega$ be 
	a non-principal ultrafilter. Then there exists a unique point $x_{\omega}$ in $X$, called the 
	\emph{ultralimit of $(x_n)$ along $\omega$} and denoted $\lim_{\omega}(x_n)$, such that
	\[
		\omega\left(\left\{n \in \N : d(x_n,x_{\omega})\leq \frac{1}{m}\right\}\right) =  1, 
		~\forall m\geq 1.
	\] 
	In particular, $x_{\omega}$ is an accumulation point of the sequence $(x_n)$. 
\end{Lem}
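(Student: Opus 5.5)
The plan is the standard compactness argument: existence by a nested-sets construction, uniqueness from the fact that $\omega$ takes only the values $0$ and $1$ and is finitely additive. Since $X$ is a compact metric space, it is totally bounded. So for every $m\geq 1$ there is a finite cover of $X$ by closed balls of radius $\frac{1}{2m}$.

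\emph{Existence.} First fix $m=1$. The sets $\{n : x_n \in B\}$, with $B$ ranging over this finite cover, form a finite family whose union is $\N$. By finite additivity and $\omega(\N)=1$, at least one of these sets has $\omega$-measure $1$; otherwise their union would have measure $0$. Choose such a ball $B_1$ and put $A_1=\{n : x_n\in B_1\}$. Next cover the compact set $\overline{B_1}$ by finitely many balls of radius $\frac{1}{4}$ and intersect with $A_1$. The same argument yields a ball $B_2$ whose index set $A_2\subset A_1$ still has measure $1$. Note that two sets of measure $1$ intersect in a set of measure $1$, because the complement of the intersection has measure at most $0+0$. Iterating gives closed sets $K_j=\overline{B_1}\cap\dots\cap\overline{B_j}$ that are nested, nonempty, and have diameters tending to $0$, and each $\{n : x_n\in K_j\}$ has measure $1$. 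Each $K_j$ is nonempty because its index set has measure $1$, hence is nonempty. By compactness, $\bigcap_j K_j$ is a single point $x_\omega$. For a given $m$, choose $j$ with $\operatorname{diam} K_j\leq \frac1m$. Then $\{n : x_n \in K_j\}\subset\{n : d(x_n,x_\omega)\leq \frac1m\}$, so the latter set has measure $1$.

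\emph{Uniqueness.} Suppose $x_\omega\neq y_\omega$ both satisfy the defining property. Choose $m$ with $\frac{2}{m}<d(x_\omega,y_\omega)$. The two index sets $\{n : d(x_n,x_\omega)\leq\frac1m\}$ and $\{n : d(x_n,y_\omega)\leq\frac1m\}$ are disjoint by the triangle inequality, yet both have measure $1$. Finite additivity would then force $\omega(\N)\geq 2$, a contradiction.

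\emph{Accumulation point.} Every set $\{n : d(x_n,x_\omega)\leq\frac1m\}$ has measure $1$. Since $\omega$ is non-principal, such a set cannot be finite, so it is infinite. Hence infinitely many terms of the sequence lie within $\frac1m$ of $x_\omega$ for every $m$, which means $x_\omega$ is an accumulation point.

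There is no serious obstacle. The only point needing care is the pigeonhole step, namely that finitely many sets covering $\N$ cannot all have measure $0$; this is exactly where the $\{0,1\}$-valued finite additivity of $\omega$ is used.
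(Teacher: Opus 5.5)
Your proof is correct. The pigeonhole step, the nested closed sets $K_j$ with $\operatorname{diam} K_j\to 0$ and index sets of measure $1$, the uniqueness argument via disjoint index sets, and the use of non-principality only for the accumulation-point clause are all sound. The pigeonhole step uses finite subadditivity of $\omega$: among finitely many index sets covering $\N$, one must have measure $1$.

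The paper gives no proof of its own; it only cites \cite[Lemma I.5.49]{bridson1999metric}. The standard argument there goes by contradiction. If no point had the property, every $x\in X$ would have an open neighbourhood $U_x$ with $\omega(\{n : x_n\in U_x\})=0$. Finitely many of the $U_x$ cover $X$, so $\N$ would be a finite union of $\omega$-null sets, which is absurd.

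Your route is a Bolzano--Weierstrass-style nested construction instead. It relies on the metric structure: total boundedness, diameters, and Cantor's intersection theorem. It is somewhat longer, but it exhibits $x_\omega$ directly as $\bigcap_j K_j$ rather than obtaining it by contradiction. The open-cover argument is shorter, and it works verbatim in any compact Hausdorff space once balls are replaced by neighbourhoods, which is the general setting for ultralimits. Your write-up also makes visible that existence and uniqueness hold for any ultrafilter, and that non-principality is only needed for the final clause.
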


Let $(X,d)$ be a metric space and $\omega$ be a non-principal ultrafilter. We consider the set 
\[
\Precone(X) = \left\{ (x_n) \in X^{\N}:\exists C>0, d(x_0,x_n) \leq Cn, \forall n \in \N \right\} 
\]
and introduce the following equivalence relation on $\Precone(X)$:
\[
	(x_n)\sim_{\omega} (y_n) \iff \lim_{\omega}\frac{d(x_n,y_n)}{n}=0.
\]

\begin{Def}[Asymptotic cone] 
	The \emph{asymptotic cone of $X$ along $\omega$} is the metric space obtained as the quotient
	\[
		\cone[X] = \Precone(X)/\sim_{\omega}
	\]
	 endowed with the metric $d_{\omega}$ given by 
	 $
	 d_{\omega}([x_n],[y_n])=\lim_{\omega}\frac{d(x_n,y_n)}{n}
	 $
	 , where $[x_n]_n$ denotes the equivalence class of a sequence $(x_n)_n$.
\end{Def}

\begin{Rq}
	More generally, one could perform a similar construction using any sequence of pointed metric 
	spaces $(X_n,d_n,o_n)_{n\geq 0}$.
	The space obtained this way, endowed with the metric $d_{\omega}=\lim_{\omega}(d_n)$, is called the \emph{ultralimit of the sequence $((X_n,d_n,o_n))_n$ along $\omega$} and denoted $\lim_{\omega}(X_n,d_n,o_n)$. 
	In the case of asymptotic cones, it is implicit that the sequence of basepoints is constant, and the resulting metric space does not depend on the choice of the constant.
\end{Rq}

\begin{Ex}
	Let us give some well-known examples of asymptotic cones.
	\label{ex:asymptotic_cones}
	\begin{enumerate}
		\item If $X$ is $\R^k$ endowed with a norm, then the spaces $(X,\frac{1}{n}d)$ 
		are all isometric, and all of its asymptotic cones are isometric to $\R^k$ with the same 
		norm.
		\item If $X$ has strictly negative curvature, say $X$ is a tree or the hyperbolic space 
		$\H^k$, then all of its cones have infinitely negative curvature. This means that they are 
		(geodesic) metric spaces where every triangle is a tripod; these are called \emph{real 
		trees}. In fact, it is shown in \cite{dyubina2001explicit} that every complete simply 
		connected manifold of dimension $\geq 2$ and curvature bounded above by a negative constant 
		has all of its asymptotic cones isometric to the universal real tree of degree 
		$2^{\aleph_0}$.
		\item If $X$ is $\Z^k$ endowed with the word metric associated to its standard generating 
		subset, then all of the asymptotic cones of $X$ are isometric to $\R^k$ with the $l^1$ 
		metric. This is a special case of the fact that asymptotic cones of discrete groups of 
		polynomial growth are Carnot-nilpotent Lie groups endowed with sub-Finsler metrics, 
		as established by Pansu in \cite{pansu1983croissance}. For more on this, see 
		\cite{ledonne2025metric}.
	\end{enumerate} 
\end{Ex}

\begin{Rq}
	Asymptotic cones were introduced by Gromov in \cite{gromov1981groups} for the proof of his theorem on groups of polynomial growth, but using Gromov-Hausdorff convergence. The use of ultrafilters, mandatory to get any kind of convergence in the case of exponential growth, came later in \cite{vandendries1984gromovs}.
\end{Rq} 

Maps between metric spaces naturally induce maps between asymptotic cones, in the following way.

\begin{Prop}[{see \cite[Lemma 10.48]{drutu2018geometric}}] 
	\label{prop:maps_induced_between_cones}
	Let $f :(X,d_X) \to (Y,d_Y)$ be a quasi-Lipschitz map between metric spaces. Let $\omega$ be a non-principal ultrafilter and $X_{\omega}$ (resp. $Y_{\omega}$) be the asymptotic cone of $X$ (resp. $Y$) along $\omega$.
	
	Then $f$ induces a continuous map $f_{\omega}$ between asymptotic cones, defined by
	\[
		f_{\omega} : X_{\omega} \to Y_{\omega},~ [x_n] \mapsto [f(x_n)].
	\]
	With this construction, quasi-isometries (resp. quasi-isometric embeddings, resp. quasi-Lipschitz maps) induce bi-Lipschitz homeomorphisms (resp. bi-Lipschitz embeddings, resp. Lipschitz maps) between asymptotic cones. 
\end{Prop}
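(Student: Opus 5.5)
The plan is to check four things in turn: the formula for $f_\omega$ sends $\Precone(X)$ into $\Precone(Y)$, it respects $\sim_\omega$, it is Lipschitz (hence continuous), and the stronger hypotheses give the stronger conclusions. Fix a constant $C$ with $d_Y(f(x),f(x')) \leq C d_X(x,x') + C$ for all $x,x'\in X$.

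\emph{Preimage of precones.} Let $(x_n)\in\Precone(X)$, with $d_X(x_0,x_n)\leq Kn$. Then
\[
d_Y(f(x_0),f(x_n)) \leq CKn + C .
\]
This is at most $(CK+C)n$ for $n\geq 1$, and the case $n=0$ is trivial. So $(f(x_n))\in\Precone(Y)$.

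\emph{Well-definedness and the Lipschitz bound.} For $(x_n),(x'_n)\in\Precone(X)$ we have
\[
\frac{d_Y(f(x_n),f(x'_n))}{n} \leq C\,\frac{d_X(x_n,x'_n)}{n} + \frac{C}{n}.
\]
Since $\omega$ is non-principal, $\lim_\omega C/n = 0$. Taking $\omega$-limits, which preserve non-strict inequalities and sums, gives $d_\omega(f_\omega[x_n],f_\omega[x'_n]) \leq C\, d_\omega([x_n],[x'_n])$. Applied to $\sim_\omega$-equivalent sequences, this shows that $f_\omega$ does not depend on the representative. It also shows that $f_\omega$ is $C$-Lipschitz, hence continuous.

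\emph{Quasi-isometric embeddings.} If moreover $\frac1C d_X - C \leq d_Y\circ(f\times f)$, the same limit argument gives $d_\omega(f_\omega[x_n],f_\omega[x'_n]) \geq \frac1C d_\omega([x_n],[x'_n])$. So $f_\omega$ is bi-Lipschitz onto its image, in particular injective and a homeomorphism onto its image.

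\emph{Quasi-isometries.} If in addition $f$ is $C$-coarsely surjective, take $[y_n]\in Y_\omega$ and choose $x_n$ with $d_Y(f(x_n),y_n)\leq C$. Then $[f(x_n)]=[y_n]$, because $C/n\to 0$. It remains to see that $(x_n)\in\Precone(X)$; this is the only step needing care, and it uses the lower bound. Fix $z\in X$. By the previous step the class of the constant sequence $(z)$ is well defined, and its image is the class of the constant sequence $(f(z))$. Since $(y_n)$ has linear growth from $y_0$, we get
\[
d_Y(f(z),f(x_n)) \leq d_Y(f(z),y_0) + d_Y(y_0,y_n) + C \leq K'n + K''.
\]
The lower quasi-isometric bound then gives $d_X(z,x_n)\leq C(K'n+K''+C)$. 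Comparing with $x_0$ by the triangle inequality yields $d_X(x_0,x_n)\leq K'''n$ for $n\geq1$. (The base-point-independence of the cone, noted in the Remark above, is what lets us measure growth from $z$ and then switch back to $x_0$.) Hence $f_\omega$ is a surjective bi-Lipschitz map, i.e. a bi-Lipschitz homeomorphism.
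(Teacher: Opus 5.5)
Your proof is correct. The paper gives no argument of its own for this statement; it only cites \cite[Lemma 10.48]{drutu2018geometric}. What you wrote is the standard direct verification behind that reference. You divide the quasi-Lipschitz inequality by $n$ and pass to the $\omega$-limit, where the additive constant disappears because $\omega$ is non-principal. This gives well-definedness and the $C$-Lipschitz bound at once. The lower bound then yields the bi-Lipschitz embedding, and coarse surjectivity together with the lower bound yields surjectivity.

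Three small remarks. First, in the last step the auxiliary point $z$, the constant sequences and the appeal to base-point independence are unnecessary. Take $z=x_0$ directly: $d_Y(f(x_0),f(x_n))\leq d_Y(f(x_0),y_0)+d_Y(y_0,y_n)+C\leq 2C+K'n$, and the lower quasi-isometric bound immediately gives linear growth of $d_X(x_0,x_n)$. Second, the ultralimits you take exist because $d_X(x_n,x'_n)/n$ is bounded for $n\geq 1$. This follows from $d_X(x_n,x'_n)\leq d_X(x_n,x_0)+d_X(x_0,x'_0)+d_X(x'_0,x'_n)$ and the precone condition on both sequences, and is worth stating. Third, your first heading should read ``image of precones'' rather than ``preimage''.
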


\begin{Rq}
	In practice, we will often consider sequences of maps $(f_n :X_n \to Y)_{n \geq 0}$ and 
	consider the 
	induced map between rescaled ultralimits $f_{\omega} : \lim_{\omega}(X_n,d_n/n) \to \cone[Y]$ 
	defined by $f_{\omega}([x_n]) = [f_n(x_n)]$. The map $f_\omega$ is well-defined and 
	Lipschitz-continuous if the sequence 
	$(f_n)$ is \emph{uniformly} quasi-Lipschitz, 
	and is a bi-Lipschitz embedding whenever $f_n$ is a 
	quasi-isometric embedding for all $n\geq 0$ with constants uniform in $n$.
\end{Rq}

A direct corollary of this proposition, and the main feature of interest of asymptotic cones, is 
that \emph{the bi-Lipschitz class of the asymptotic cone is a quasi-isometry invariant of metric 
spaces}. In particular, the topological invariants of the cone are large-scale invariants of the 
initial metric space. 

\begin{Rq}
	In the sequel, we fix once and for all a non-principal ultrafilter $\omega$.
	 While it is known that the asymptotic cone can depend on the choice of an ultrafilter, we will 
	 almost only deal with statements holding for \emph{every} non-principal ultrafilter.
	
	Note moreover that, in the case of connected Lie groups, if the continuum hypothesis holds then 
	the bi-Lipschitz class of the asymptotic cone does not depend on the non-principal ultrafilter 
	(see \cite[Corollaire 1.9]{cornulier2014aspects}).
\end{Rq}

\subsection{Groups with the SOL obstruction } 

Recall that we study here real triangulable groups, which form a class of groups that contains a representative for every quasi-isometry class of connected Lie groups. In particular, they are simply connected solvable Lie groups.

\subsubsection{The groups $\SOL$} \label{subsec:SOL_alpha}
As stated before, our base groups all come from a family of deformations of the classical group $\SOLV$.

\begin{Def}
	Given $\alpha>0$, we denote by $\SOL$ the Lie group given by the semidirect product $\R^2 \rtimes_{(1,-\alpha)} \R$, where $\R$ acts on $\R^2$ via the one-parameter subgroup
	\[ t \mapsto  \begin{pmatrix}
							e^t & 0 \\
							0 & e^{-\alpha t}
					   \end{pmatrix}.
	\]
\end{Def}

The key assumption here is that the so-called \emph{weights} $1$ and $-\alpha$ have opposite signs, so that $t>0$ expands the first coordinate $x$ and contracts the second coordinate $y$ at the same time.

Notice that $\SOL$ is homeomorphic to $\R^3$. Thus, the algebraic difference does not appear at the 
topological level; but it does at the geometric level. In particular, the left-invariant Riemannian 
metric that is given by the identity matrix at the point $(0,0,0)$ is given by the formula
\[
 	ds^2=dt^2 + e^{-2t}dx^2 + e^{2\alpha t}dy^2.
\] 
Notice how the expanding action of $t$ on $x$, for $t>0$, is reflected geometrically by the 
contracting factor $e^{-2t}$ in front of $dx^2$. 
See Figure \ref{fig:metric_view_SOL} for a metric view of $\SOL$.

\begin{Rq}
	Notice that the $xt$ plane in $\SOL$, $\sg[x]\rtimes \sg[t]$, is a {totally geodesic copy of 
	the hyperbolic plane $\H^2$}. 
	Indeed, it is the Lie subgroup $\R \rtimes\R$, where $t$ acts by $e^t$ and with induced metric 
	$dt^2+e^{-2t}dx^2$, which is the $\log$-model for the hyperbolic plane: an explicit isometry 
	with the upper half-plane model of $\H^2$ is merely given by $(x,t) \mapsto (x,e^t)$.
	Moreover, it is the fixed point set of the isometry $(x,y,t) \mapsto (x,-y,t)$,
	hence this submanifold is totally geodesic.
	
	Similarly, the $yt$ plane is a totally geodesic submanifold in $\SOL$ which is bi-Lipschitz 
	homeomorphic to $\H^2$. 
	The main difference is that it is ``flipped upside-down'', meaning one has to change the sign 
	of $t$ (and rescale it by $\alpha$) to get back to the $\R \rtimes\R$ model of $\H^2$.
	For more on the geometry of $\SOL$, see \cite[Section 3]{eskin2012coarseI}.
\end{Rq}
The key thing to note is that we have a double foliation of $\SOL$ by hyperbolic planes, one 
contracting ``upwards'' and the other contracting ``downwards''. 
This observation is crucial to our metric computations in Section \ref{subsec:image_SOL}, helping 
us construct many quasi-geodesics inside $\SOL$.

\begin{figure}
	\centering
	\raisebox{20pt}{
	 \begin{tikzpicture}[line cap=round,line join=round,>=angle 45,x=0.4cm,y=0.4cm, 
	 scale=.5, ]
		%\draw[help lines, color=gray, dashed] (-15,-15) grid (15,10);
		\clip(-9,-15) rectangle (14,7); %découpe au délà d'une certaine zone
		% Verticales
		\draw[line width=1pt] (-6,0)-- (-6,-10); 
		\draw[line width=1pt] (8,2)-- (8,-8);
		\draw[line width=1pt] (0,6)-- (0,-4); 
		\draw[line width=1pt] (2,-14)-- (2,-4);
		%Horizontales
		\draw (-6,0)-- (0,6); %Pente (+6,-6) = 1
		\draw (0,6)-- (8,2); %Pente (+8,-4) = -1/2
		\draw (-6,-10)-- (0,-4); % (+6,+6) =1
		\draw (8,-8)-- (0,-4); % (-8,+4) = -1/2
		%\draw[dashed] (-6,-10) -- (2,-14); % (+8,-4) = -1/2
		%\draw[dashed] (2,-14) -- (8,-8) ; % (+6,+6) =1
		%\draw[dashed] (-6,0) -- (2, -4); % (+8,-4) =-1/2
		%Labels
		\draw (-6,-8.5) node[anchor=south east] {$ \sg[t] $};
		\draw (0,0) node[anchor=south west] {$ x \sg[t] $};
		\draw (2,-9) node[anchor=north east] {$ y \sg[t] $};
		\draw (8,-4) node[anchor=south west] {$ xy\sg[t] $};
		\draw [dash pattern=on 3pt off 3pt] (0,0)-- (-6,-6)-- (2,-10)-- (8,-4) -- cycle;
		%Vecteurs
		\draw [->] (-6,-6) -- (-5,-5) node[above]{$x$};
		\draw [->] (-6,-6) -- (-4,-7) node[above]{$y$};
		\draw [->] (-6,-6) -- (-6,-4) node[left]{$t$};
	\end{tikzpicture}
	}
	\qquad
	\begin{tikzpicture}[line cap=round,line join=round,>=angle 45,x=0.5cm,y=0.5cm, ]
		\clip(-2,-6) rectangle (8,6);
		%Côtés
		\draw [color=black, line width=1pt,domain=-1.5:1.5, samples = 200, variable = \t] 
		plot({5-tanh(\t)},{-2*\t});
		\draw [color=black, line width=1pt,domain=-1.5:2, samples = 200, variable = \t] 
		plot({5-(3*tanh(\t)+2)},{-2*\t});
		\draw [color=black, line width=1pt,domain=-2:-0.1, samples = 200, variable = \t] 
		plot({5-(2-tanh(\t))},{-2*\t});
		\draw [color=black, line width=1pt,domain=0.1:1.5, samples = 200, variable = \t] 
		plot({5-(2-tanh(\t))},{-2*\t});
		\draw [color=black, line width=1pt,domain=-2:2, samples = 200, variable = \t] 
		plot({5-(1.1*tanh(\t)+4.15)},{-2*\t});
		%Rectangles
		\draw [shift={(1.25,2.5)}] (0,0) -- (1,0.25) -- (5,-0.75) -- (4,-1) -- cycle;
		\draw [shift={(-0.5,-2.75)}] (0,0) -- (1,-0.25) -- (5,0.75) -- (4,1) -- cycle;
		%Labels
		\draw (0,0) node[anchor=center, rotate=65] {\small $\sg[t]$};
		\draw (1.5,-0.5) node[anchor=center, rotate=45] {\small $y\sg[t]$};
		%$\{(0,1)\}\times \R$
		\draw (5.5,-0.5) node[anchor=center, rotate=70] {\small $xy\sg[t]$};
		%$\{(1,1)\}\times \R$
		\draw (3,1) node[anchor=center, rotate=-60] {\small $x\sg[t]$};
		%$\{(1,0)\}\times \R$
	\end{tikzpicture}
	\vspace*{-10pt}
	\caption{
	On the right, metric view of the tube $[0,1]^2 \times \R$ inside $\SOL$, viewed on the left in 
	coordinates. 
	Each of the four faces is a portion of $\H^2$, oriented alternatively up or 
	down, and the edges are left translates of $\sg[t]$ that are geodesics in $\SOL$ and in $\H^2$. 
	Two edges of a face either converge or 
	diverge exponentially when $t$ goes to $\pm \infty$.
	The rectangles are the metric view of squares $[0,1]^2\times \{T\}$, with $T$ either 
	positive or negative.
	Figure adapted from \cite[Fig. 11]{ledonne2024rough}.}
	\label{fig:metric_view_SOL}
\end{figure}

\subsubsection{The SOL obstruction}
The following condition was introduced by Abels in \cite{abels1987finite} in a $p$-adic context and 
in terms of weights, and then made explicit by Cornulier-Tessera in \cite{cornulier2017geometric}.

\begin{Def}
	A real triangulable Lie group is said to have the \emph{SOL obstruction} if it admits a group $\SOL$ as a quotient for some $\alpha>0$.
\end{Def}
Loosely speaking, the SOL obstruction is the property of having both dilating and contracting 
directions, just like what happens inside $\SOL$; see Figure \ref{fig:metric_view_SOL}.
The negation of this condition is sometimes 
called being \emph{$2$-tame}.

\begin{Rq} \label{rq:SOL_obstruction_with_weights}
	The SOL obstruction was originally defined in \cite{abels1987finite} using the weights of the 
	Lie algebra. Those are 
	linear functionals on a Cartan subalgebra encoding the dynamical properties of the conjugation 
	action. 
	
	More precisely, in the case of real triangulable groups the SOL obstruction is equivalent to 
	having $0$ lie on the segment between two weights of the action on the 
	abelianization of the exponential radical; 
	see \cite[Proposition 4.D.9]{cornulier2017geometric}.
	In particular, this holds for any Lie group of the form $N\rtimes A$, with $N$ nilpotent and 
	$A$ abelian, where the action of $A$ on $N/[N,N]$ has two nonzero weights $\lambda,\mu$ such 
	that $0$ 
	lies on the segment $[\lambda,\mu]$. 
\end{Rq} 

\begin{Ex} \label{ex:groups_SOL_obstruction}
	Aside from the groups $\SOL$, groups with the SOL obstruction include for example the so-called 
	\emph{oscillator} groups
	\[
		\Osc=\Heis_3\rtimes_{(1,-\alpha,1-\alpha)}\R, ~\alpha>0,
	\]
	where $\Heis_3$ is the $3$-dimensional Heisenberg group and $t\in \R$ acts via the matrix 
	$\Diag(e^t,e^{-\alpha t},e^{(1-\alpha)t})$. Modding out $\Osc$ by the center of $\Heis_3$ 
	indeed yields $\SOL$.
	
	Another class of examples consists of the semi-direct products $H \rtimes \SOL$, where $H$ is 
	any real triangulable group.  
\end{Ex}

Note that, in the case of semi-direct products, Theorem \ref{thm:main} follows from 
\cite{burillo1999dimension}, since $\cone[H \rtimes \SOL]$ retracts onto $\cone$. However, this is 
not the case for non-split extensions of $\SOL$ such as the oscilators $\Osc$, for which our 
results are new.

As exposed in the introduction, our interest in the SOL obstruction comes from the following 
result. 

\begin{Thm}[{\cite[Thm 12.C.1]{cornulier2017geometric}}] \label{thm:SOL_obs_dehn_exp}
	Let $G$ be a real triangulable group with the SOL obstruction. Then the Dehn function of $G$ is at least exponential.
	
	In particular, by Theorem \ref{thm:cone_dehn}, there exists an asymptotic cone of $G$ whose fundamental group is non-trivial.
\end{Thm}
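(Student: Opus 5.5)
The ``in particular'' part is formal once the first part is known. If all asymptotic cones of $G$ were simply connected, Theorem \ref{thm:cone_dehn} would make the Dehn function of $G$ polynomially bounded, contradicting exponential growth. So the whole plan is aimed at the lower bound on the Dehn function, which I would prove in three steps.

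\textbf{Step 1: reduce the area estimate to $\SOL$.} Fix a surjection $p:G\to\SOL$. Since $\SOL$ is compactly presented, a van Kampen diagram for a word in $G$ maps under $p$ to a diagram in $\SOL$. Each relator of $G$ maps to a loop of bounded length, which can be filled with bounded area. Hence
\[
\mathrm{Area}_G(w)\succeq \mathrm{Area}_{\SOL}(p(w))
\]
for every null-homotopic word $w$ of $G$. It therefore suffices to produce words $w_n$ of length $\preceq n$ that are relations \emph{in $G$}, and whose images have area $\succeq e^{cn}$ in $\SOL$.

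\textbf{Step 2: build the relations.} Choose lifts $\tilde t,\tilde x,\tilde y\in G$ of the basis elements $t,x,y$, taking $\tilde x,\tilde y$ in the exponential radical $H$. This should be possible because $x,y$ have the nonzero weights $1$ and $-\alpha$, so they come from $H/[H,H]$. The radical $H$ is nilpotent of some class $c$. Set
\[
w_n=\lb\cdots\lb \tilde t^n\tilde x\tilde t^{-n},\tilde y\rb,\dots,\tilde y\rb
\]
with $k$ brackets, where $k$ is large compared to $c$ and fixed. Its length is $\simeq n$, since $k$ is fixed and $\tilde t^n\tilde x\tilde t^{-n}$ has length about $2n$.

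To see that $w_n$ is trivial in $G$, I would argue with weights. The element $\tilde t^n\tilde x\tilde t^{-n}$ lies in $H$, and each bracket with $\tilde y$ adds the weight $-\alpha$. If necessary, I would replace $\tilde y$ by its $\tilde t$-conjugates, choosing the words so that the accumulated weights leave the finite set of weights of $H$, or so that the bracket length exceeds $c$. This is where I would follow the choice of words in Cornulier--Tessera, adjusting the pattern if needed.

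\textbf{Step 3: the area lower bound in $\SOL$.} In $\SOL$ the word $p(w_n)$ is an iterated commutator in the abelian group $\R^2$ spanned by $e^n x$ and $y$. It bounds a ``rectangle'' of width $e^n$ in the $x$-direction and width $1$ in the $y$-direction, while staying at heights $t\in[0,n]$.

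To bound its area I would use a Stokes argument. I would look for a closed $2$-form $\beta$ on $\SOL$ with bounded comass on the relevant region. The natural candidate is a rescaling of $dx\wedge dy$, possibly combined with the coarse retractions onto the two hyperbolic foliations. The integral of $\beta$ over any filling disc equals the integral of its primitive over the loop, and the primitive should be of order $e^{n}$. This would give area $\succeq e^{cn}$.

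\textbf{Main obstacle.} The hard part is Step 3. The comass of $dx\wedge dy$ behaves like $e^{(1-\alpha)t}$, which is not bounded on all of $\SOL$. I would first try a cutoff argument: show that a minimal-area filling can be pushed, by a Lipschitz retraction, into a slab of height $O(n)$ without increasing its area by more than a constant factor, and then apply the Stokes argument inside that slab. If this fails, the fallback is the coarse separation argument: the loop links the two hyperbolic planes. Because the two foliations contract in opposite directions, any disc crossing between them must spend area exponential in the height gap.
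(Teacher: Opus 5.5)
Your deduction of the ``in particular'' part is exactly the paper's, and the paper proves nothing more than that: the Dehn function bound itself is quoted from \cite[Thm 12.C.1]{cornulier2017geometric}. Your Steps 1 and 2 are fine and match the setup the paper uses later (Lemmas \ref{lem:image_exp_rad} and \ref{lem:theta_k_is_a_loop_in_G}). No weight bookkeeping or conjugation of $\tilde y$ is needed there, since a $k$-fold commutator of elements of $H$ lies in $\mathrm{C}^{k+1}H$.

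The genuine gap is Step 3, which carries the whole content of the cited theorem, and it fails as stated. For $k\geq 2$ the loop $p(w_n)=\theta_{k,n}$ does not bound a single rectangle. Up to backtracking it is $w_k(a_{0,n},\dots,a_{k-1,n})$ (Proposition \ref{prop:word_Theta_k}). Again up to backtracking, $a_{i,n}$ projects in the $xy$-plane onto the boundary of $[0,e^n]\times[i,i+1]$, traversed with orientation $(-1)^i$. Since $w_{k+1}=w_kw_k'$ and the index shift flips parity, $w_k$ has as many even-indexed as odd-indexed letters once $k\geq 2$ (e.g.\ $w_2=s_0s_1$, $w_3=s_0s_1s_2s_1$). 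Hence $\int_{p(w_n)}x\,dy=0$, and $\int_D c\,dx\wedge dy=0$ for every filling $D$: your primitive is not of order $e^n$, it vanishes.

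This is structural, not accidental. $\Theta_k\in\mathrm{C}^{k+1}F_{U,Y}$ dies in the free $2$-step nilpotent quotient, which is what records signed area. For $\alpha=1$, $dx\wedge dy$ is the cocycle of the central extension $\mathrm{Osc}_1\to\SOLV$, to which these loops lift as closed loops. So the very property that makes the loops lift to $G$ kills your invariant, and you cannot retreat to $k=1$ in general (e.g.\ $G=\mathrm{Osc}_\alpha$).

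What is missing is a closed form that separates the levels $y\in[i,i+1]$. One choice is $h(y)\,dx\wedge dy$ with $h$ a smooth bump close to $\mathbf{1}_{[0,1]}$, which is closed for any $h$. Since $s_0$ occurs exactly once in $w_k$, its integral over any filling is at least $e^n/2$. For $\alpha=1$ its comass is at most $\sup|h|$, which settles that case.

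For $\alpha\neq 1$ the comass $|h(y)|e^{(1-\alpha)t}$ is unbounded, and your remedies do not work as stated:
\begin{itemize}
\item clamping $t$ to a slab is not Lipschitz, since it expands the $x$-direction above the slab and the $y$-direction below it;
\item even inside a slab $|t|\le Cn$, the factor $e^{|1-\alpha|Cn}$ may swamp the gain $e^n$;
\item the separation fallback is only a heuristic.
\end{itemize}
One needs genuine correction terms. For $\alpha>1$, take the primitive $-H(y)\phi(t)\,dx$ with $H'=h$ and a cutoff $\phi$ equal to $1$ for $t\geq 0$ and to $e^{(\alpha-1)t}$ for $t\ll 0$. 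For $\alpha<1$, apply the same idea to the mirrored loops $\Theta_k(U=t^{-n}yt^n,Y=x)$. Alternatively, use the argument of \cite[Section 12]{cornulier2017geometric}, which is what the paper relies on.
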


This theorem motivates the further study of fundamental groups of asymptotic cones of groups with 
the SOL obstruction. This is what we deal with here, as Theorem \ref{thm:main} 
gives a more precise result about these fundamental groups, showing that they are far from 
being trivial as they contain nonabelian free groups.

\begin{Rq}[Asymptotic cone of the groups $\SOL$]
	Although this will not be of use here, note that the asymptotic cone $\cone$ can be explictly 
	described, as a horocyclic product. 
	It is a ``Diestel-Leader $\R$-graph'', meaning the ``hypersurface'' of 
	equation $b(x) + \alpha b(y)=0$ inside the product of two homogeneous real trees, where $b$ is 
	a Busemann function. In particular this does not depend on $\omega$, and 
	up to bi-Lipschitz equivalence this does not depend on $\alpha$ either.
	
	This description comes from the quasi-isometric embedding of  $\SOL$ 
	inside the product of two hyperbolic planes as a subspace with a similar equation. 
	The real tree appears as the asymptotic cone of $\H^2$, see Example \ref{ex:asymptotic_cones}. 
	For more on this, see \cite[Section 9]{cornulier_dimension_2008} ; for the link 
	with lamplighters and Diestel-Leader graphs, see \cite[Section 3]{eskin2012coarseI}.
\end{Rq}

\subsubsection{The exponential radical} \label{subsec:exp_rad}
To make use of the solvability of our groups, we consider a normal nilpotent subgroup defined as 
follows.

\begin{Def}
	Let $G$ be a simply connected solvable Lie group. The \emph{exponential radical} of $G$, denoted $\Rexp$, is the subgroup of elements that are \emph{exponentially distorted}, meaning
	\[
		\Rexp \defeq \left\{ g \in G :  |g^n|  \preceq \log(1+n) \right\}.
	\]
\end{Def}

Defined by Guivarc'h in \cite{guivarch1980loi} and studied by Osin in \cite{osin2002exponential}, it has in our context the following characterization.

\begin{Lem}[{\cite[Lemma 2.5]{cornulier_dimension_2008}}]   
	Let $G$ be a real triangulable group. Then the exponential radical $\Rexp$ is a nilpotent normal subgroup of $G$ equal to the stable term of the descending central series of $G$, meaning that
	\[
		\Rexp = \bigcap_{i\geq 1}\mathrm{C}^iG.
	\]
	Equivalently, $\Rexp$ is the smallest normal subgroup of $G$ such that $G/\Rexp$ is nilpotent. 
\end{Lem}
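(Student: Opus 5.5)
The plan is to work at the Lie algebra level and prove the algebraic statements before the metric one. Let $\mathfrak g$ be the Lie algebra of $G$ and set $\mathfrak g^\infty=\bigcap_i \mathrm C^i\mathfrak g$. The descending central series of a finite-dimensional algebra stabilizes, so $\mathfrak g^\infty=\mathrm C^{i_0}\mathfrak g$ for some $i_0$.

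\textbf{Algebraic part.} First I establish the following facts about $\mathfrak g^\infty$.
\begin{enumerate}
\item It is a characteristic, hence normal, ideal.
\item It is contained in $[\mathfrak g,\mathfrak g]$. That algebra is nilpotent by Lie's theorem, since $\mathfrak g$ is solvable and triangulable over $\R$; hence $\mathfrak g^\infty$ is nilpotent.
\item If $\mathfrak n$ is an ideal with $\mathfrak g/\mathfrak n$ nilpotent of step $c$, then $\mathrm C^{c+1}\mathfrak g\subset\mathfrak n$, so $\mathfrak g^\infty\subset\mathfrak n$. Conversely, $\mathfrak g/\mathfrak g^\infty$ is nilpotent.
\end{enumerate}
I then pass to groups. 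In a simply connected solvable Lie group, connected normal subgroups are closed and simply connected, and $\exp$ restricts to the corresponding subalgebra. Hence $G^\infty\defeq\exp(\mathfrak g^\infty)=\bigcap_i\mathrm C^iG$ is the smallest normal subgroup with nilpotent quotient, and it is nilpotent. It remains to show that $\Rexp=G^\infty$.

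\textbf{Inclusion $\Rexp\subset G^\infty$.} The quotient map $G\to G/G^\infty$ is Lipschitz for left-invariant word metrics, so it sends exponentially distorted elements to exponentially distorted elements. The quotient is a simply connected nilpotent Lie group. In such a group every nontrivial one-parameter subgroup is at most polynomially distorted: $|g^n|\succeq n^{1/c}$, where $c$ is the step (Guivarc'h; Osin). Therefore an exponentially distorted element has trivial image, that is, it lies in $G^\infty$.

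\textbf{Inclusion $G^\infty\subset\Rexp$.} This is where real triangulability enters.
\begin{enumerate}
\item Fix a Cartan subalgebra $\mathfrak h$. The Fitting decomposition gives $\mathfrak g=\mathfrak h\oplus\bigoplus_{\lambda\neq0}\mathfrak g_\lambda$. A standard argument identifies $\mathfrak g^\infty$ with the ideal generated by the nonzero weight spaces, and in fact with $\bigoplus_{\lambda\ne0}\mathfrak g_\lambda+[\mathfrak g_\lambda,\mathfrak g_{-\lambda}]$-type terms.
\item For a weight vector $X\in\mathfrak g_\lambda$ with $\lambda\neq0$, choose $H\in\mathfrak h$ with $\lambda(H)=1$. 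The weights are real, and $\mathrm{ad}H$ acts on $\mathfrak g_\lambda$ as $1$ plus a nilpotent. Hence $\exp(sH)\exp(X)\exp(-sH)=\exp(e^{s}P(s)X)$, where $P(s)$ is a polynomial in $\mathrm{ad}H$ with unipotent leading behaviour.
\item From this, $\exp(nX)$ is a conjugate of a bounded element by $\exp(\pm sH)$ with $s\approx\log n$, up to a bounded correction. This gives $|\exp(nX)|\preceq\log(1+n)$.
\end{enumerate}

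\textbf{Main obstacle and plan for it.} The difficulty is passing from weight vectors to an arbitrary element of the possibly nonabelian group $G^\infty$. A one-parameter subgroup of $G^\infty$ is not a product of weight-vector one-parameter subgroups, so a direct reduction does not work. I would argue by induction along the lower central series $G^\infty=N_1\supset N_2\supset\dots\supset N_{c+1}=1$ of $N=G^\infty$. Each $N_j$ is characteristic in $N$, hence normal in $G$. The steps are as follows.
\begin{enumerate}
\item Prove the stronger uniform statement that every $u\in N$ satisfies $|u|_G\preceq\log(1+\|u\|)$ for a fixed norm $\|\cdot\|$ in exponential coordinates. This is stable under the inductive steps, unlike distortion of single elements.
\item On the abelian quotient $N_j/N_{j+1}$, write the class of $u$ as a sum of boundedly many weight vectors and apply the weight-vector computation above.
\item The resulting product differs from $u$ by an element of $N_{j+1}$. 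By Baker--Campbell--Hausdorff its norm is polynomial in $\|u\|$, so its length is still $\preceq\log(1+\|u\|)$ by the induction hypothesis.
\end{enumerate}
Applying the uniform bound to $u=g^n$, whose norm grows polynomially in $n$, yields $|g^n|\preceq\log(1+n)$. Hence $G^\infty\subset\Rexp$, and the lemma follows.
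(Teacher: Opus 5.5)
\textbf{Verdict: there is a genuine gap, in the abelian-layer step of your induction on the lower central series of $N=G^\infty$.} The paper gives no proof of its own; it only cites Cornulier.

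Most of your plan is sound. This includes the algebraic part, the identification of $\bigcap_i\mathrm C^iG$ with the analytic subgroup $G^\infty$, and the inclusion $\Rexp\subset G^\infty$ via polynomial distortion in the nilpotent quotient. Proving a uniform bound $|u|_G\preceq\log(1+\|u\|)$ on all of $N$, rather than distortion of single elements, is also a good idea. It avoids needing to know in advance that exponentially distorted elements form a subgroup (Guivarc'h, Osin).

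\textbf{Where it fails.} Write $\mathfrak n=\mathfrak g^\infty$ and $\mathfrak n_j$ for the Lie algebra of $N_j$. For $j\geq 2$, the $\mathfrak h$-module $\mathfrak n_j/\mathfrak n_{j+1}$ can have a nonzero weight-zero component. Your weight-vector computation only works for $\lambda\neq 0$. On weight zero, $e^{s\ad H}$ is unipotent for every $H\in\mathfrak h$, so conjugation compresses nothing. You note yourself that $\mathfrak g^\infty$ contains the zero-weight terms $[\mathfrak g_\lambda,\mathfrak g_{-\lambda}]$; these are exactly what appears in the deeper layers.

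\textbf{Concrete instance.} Take the paper's group $\Osc[1]=\Heis_3\rtimes_{(1,-1,0)}\R$. There $N=\Heis_3$, and $N_2=Z(\Heis_3)$ is central in $G$. So $N_2$ has weight zero and is fixed by every conjugation, and your layer step gives no bound at all on $|\exp(rZ)|$. Nevertheless $\exp(rZ)=[\exp(\sqrt r\,X),\exp(\sqrt r\,Y)]$ has length $\preceq\log(1+r)$. The distortion comes from commutators of distorted elements, not from conjugation.

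\textbf{How to repair it within your descending induction.}
\begin{enumerate}
\item The first layer is fine. Since $[\mathfrak g_\lambda,\mathfrak g_{-\lambda}]\subset[\mathfrak n,\mathfrak n]$, the quotient $\mathfrak n/\mathfrak n_2$ is an $\mathfrak h$-equivariant quotient of $\bigoplus_{\lambda\neq0}\mathfrak g_\lambda$, so it has no zero weight.
\item For $j\geq 2$, do not decompose the class of $u$ into weight vectors of the layer. Instead fix nonzero-weight vectors $X_1,\dots,X_m$ spanning $\mathfrak n$ modulo $\mathfrak n_2$. Then $\mathfrak n_j/\mathfrak n_{j+1}$ is spanned by the iterated brackets $[X_{i_1},[X_{i_2},\dots,X_{i_j}]]$.
\item By Baker--Campbell--Hausdorff, the iterated group commutator of $\exp(a_1X_{i_1}),\dots,\exp(a_jX_{i_j})$ equals $\exp(a_1\cdots a_j[X_{i_1},[\dots,X_{i_j}]])$ modulo $N_{j+1}$.
\item Take $|a_k|=|c|^{1/j}$ with suitable signs. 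This realizes a coefficient $c$ with $G$-length $\preceq\log(1+|c|)$, by the first-layer estimate.
\item The error lies in $N_{j+1}$ and has norm polynomial in $\|u\|$, so your induction hypothesis absorbs it.
\end{enumerate}

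\textbf{A minor point.} When $\ad H$ has a nontrivial nilpotent part on $\mathfrak g_\lambda$, taking $s=\log n$ leaves a conjugated element whose size is a power of $\log n$, not bounded. Take $s=\log n+O(\log\log n)$ instead; this does not affect the logarithmic bound.
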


\begin{Ex}
	Inside $\SOL$, the exponential radical is the $xy$ plane. In the oscillator group 
	$\mathrm{Osc}_{\alpha}$ (see Example \ref{ex:groups_SOL_obstruction}), it is the subgroup 
	$\mathrm{Heis}_3$.
\end{Ex} 

\begin{Lem} \label{lem:image_exp_rad}
	Let $p:G\to H$ be a surjective homomorphism between real triangulable groups. Then we have 
	\[
		p(\Rexp) =\Rexp[H].
	\] 
\end{Lem}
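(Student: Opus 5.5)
We use the characterization of Lemma [Lemma 2.5 of cornulier_dimension_2008] recalled above: for a real triangulable group, $\Rexp$ is the stable term $\bigcap_{i\geq 1}\mathrm{C}^iG$ of the descending central series, and it is the smallest normal subgroup with nilpotent quotient. The plan is to prove both inclusions separately.

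\emph{Inclusion $p(\Rexp)\subseteq\Rexp[H]$.} A surjective homomorphism maps commutators onto commutators, so $p(\mathrm{C}^iG)=\mathrm{C}^iH$ for every $i$; this follows by induction on $i$, since $\mathrm{C}^{i+1}G$ is generated by the $[g,c]$ with $c\in\mathrm{C}^iG$. Hence $p(\Rexp)\subseteq\bigcap_i p(\mathrm{C}^iG)=\bigcap_i\mathrm{C}^iH=\Rexp[H]$.

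There is also a metric route, as a sanity check. Since $p$ is a continuous homomorphism of compactly generated groups, it is Lipschitz for word metrics, so $|p(g)^n|_H\preceq|g^n|_G\preceq\log(1+n)$ for $g\in\Rexp$.

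\emph{Inclusion $\Rexp[H]\subseteq p(\Rexp)$.} Since $\Rexp$ is normal and $p$ is surjective, $N\defeq p(\Rexp)$ is a normal subgroup of $H$. Moreover, $p$ induces a surjection $G/\Rexp\to H/N$. The group $G/\Rexp$ is nilpotent, and a quotient of a nilpotent group is nilpotent, so $H/N$ is nilpotent.

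By minimality of $\Rexp[H]$ among normal subgroups with nilpotent quotient, we then get $\Rexp[H]\subseteq N$. The only point to check is whether minimality is meant among \emph{closed} normal subgroups. Two facts settle this. First, $\Rexp$ is a connected closed normal subgroup of a simply connected solvable group, so its image is closed. Second, the descending-central-series description avoids the issue anyway: nilpotency of $H/N$ of class $c$ gives $\mathrm{C}^{c+1}H\subseteq N$, hence $\Rexp[H]=\bigcap_i\mathrm{C}^iH\subseteq N$ directly.

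\emph{Main obstacle.} The main obstacle is minor: making sure the stable term is reached at a finite stage. Since $G$ is finite-dimensional, the terms $\mathrm{C}^iG$ stabilize, so the intersection equals some $\mathrm{C}^{i_0}G$. For $H$, taking $i\geq\max(i_0,c+1)$ gives $\Rexp[H]=\mathrm{C}^iH=p(\mathrm{C}^iG)=p(\Rexp)$, which yields both inclusions at once.
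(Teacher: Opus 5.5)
Your proof is correct and follows the paper's argument. The identity $p(\mathrm{C}^iG)=\mathrm{C}^iH$ gives $p(\Rexp)\subseteq\Rexp[H]$, and nilpotency of $H/p(\Rexp)$, as a quotient of $G/\Rexp$, gives the reverse inclusion via $\mathrm{C}^{c+1}H\subseteq p(\Rexp)$, so no closedness issue arises. Your closing stabilization remark is also valid: the lower central series of $G$ is constant from some $i_0$ on, so $\bigcap_i\mathrm{C}^iH=\mathrm{C}^{i_0}H=p(\mathrm{C}^{i_0}G)$, which yields both inclusions at once.
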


\begin{proof}
	Since $p$ preserves commutators, we get by induction that $p(\mathrm{C}^iG)=\mathrm{C}^iH$ for 
	all $i\geq 0$. Taking the direct image yields the inclusion $p(\Rexp) \subset \Rexp[H]$.
	
	For the reverse inclusion, see that $p(\Rexp)$ is normal in $H$ since $\Rexp$ is normal 
	in $G$ 
	and $p$ is surjective. Moreover, notice that $H/p(\Rexp)$ is a quotient of $G/\Rexp$. The 
	latter being nilpotent, $H/p(\Rexp)$ is nilpotent. This implies that $\Rexp[H] \subset 
	p(\Rexp)$, completing the proof.
\end{proof}

\begin{Rq}
	This lemma is what allows us to work with the exponential radical in Section 
	\ref{sec:geom_loops}. What it means in practice is that, given a real triangulable group $G$ 
	quotienting onto $\SOL$, the elements $x,y$ of $\SOL$ can be lifted to elements 
	$\Tilde{x},\Tilde{y}$, not only in $G$, but \emph{in the exponential radical of $G$}. Hence, by 
	nilpotency of this subgroup, iterated commutators of $\Tilde{x}$ with $\Tilde{y}$ eventually 
	vanish; see Lemma \ref{lem:theta_k_is_a_loop_in_G}.
\end{Rq}

\subsection{Combinatorial paths and loops} 
\label{subsec:combinatorial_paths}

The bulk of this paper consists in constructing the loops we need to get non-trivial fundamental groups and checking their metric properties. To achieve this, we take a combinatorial point of view. Instead of directly defining loops as maps from the circle, we construct words in a free group that we then evaluate into our groups. This idea appears already in \cite[Section 12]{cornulier2017geometric}, but we make it more explicit for our needs.

While the following formalism might seem cumbersome at first, it will allow us to highlight in our 
proof the distinction between combinatorial arguments (in Sections \ref{sec:combinatorial}) and 
geometric ones (in Sections \ref{sec:geom_loops} to \ref{sec:change_scale}).

\subsubsection{Combinatorial paths in groups}

Let $G$ be a locally compact group, generated by a symmetric compact subset $S$. We endow $G$ with the word metric relative to the generating subset $S$.

By a \emph{combinatorial path} of length $\ell$ in $G$ we mean a finite sequence of points 
$\theta=(\theta(i))_{0\leq i \leq \ell}$ in $G$ such that for all $i\leq \ell-1$, the elements 
$\theta(i)$ and $\theta(i+1)$ are adjacent in the Cayley graph of $G$ with respect to $S$; this 
simply means that there exists $s_i \in S$ such that $\theta(i+1)=\theta(i)s_i$. 

\begin{Not}
	Let $\theta$ be a combinatorial path. We denote by $\overline{\theta}$ the reverse path.
	If $g \in G$, $g\theta$ denotes the left \emph{translate of $\theta$ by $g$}, meaning the path 
	defined by left translation of all points of $\theta$. 
	
	Two paths $\theta$ and $\theta'$ of  respective lengths $\ell$ and $\ell'$ are 
	\emph{concatenable} if $\theta(\ell)$ is equal to  $\theta'(0)$. In that case, we denote by 
	$\theta \theta'$ their concatenation, which is a path of length $\ell+\ell'$.
	
\end{Not}

\begin{Rq}[Paths and loops as maps on segments and circles] \label{rq:paths_as_maps_on_segments}
	From our definition, a combinatorial path $\theta$ of length $\ell$ in a group $G$ is a map 
	$\theta:\{0,\dots, \ell\} \to G$. We view the discrete metric space $\{0,\dots, \ell\}$ as the 
	integer points of the segment $[0,\ell]$, and abusively write $\theta$ as a map 
	$\theta:[0,\ell] \to G$. 
	
	This is justified by the fact that we wish to deal with asymptotic geometry of paths. In particular, if $\ell_n$ is a sequence of positive integers satisfying $\ell_n \simeq n$, then the sequence of spaces $\frac{1}{n}  \{0,\dots, \ell_n\}$ has the same limit (in the Gromov-Hausdorff metric, and hence the same ultralimit) as the sequence $\frac{1}{n} [0,\ell_n]$, namely the unit segment $[0,1]$.
	
	Similarly, a combinatorial loop of length $\ell$ in $G$ is a map $\Z/\ell\Z \to G$. We view 
	this discrete metric space as the integer points of the circle of length $\ell$, which we 
	denote by $\ell \S^1 = \R/\ell \Z$. Again we allow ourselves to write $\theta$ as a map $\ell 
	\S^1 \to G$, as the maps induced between asymptotic cones are unchanged.
\end{Rq}

From our definition of combinatorial paths, we see that consecutive points on a path must be at distance $1$ with respect to the word metric relative to the generating subset $S$. Hence the following.

\begin{Lem} \label{lem:paths_are_lipschitz}
	Let $\theta$ be a combinatorial path of length $\ell$ in a locally compact group $G$. Then the 
	map $\theta :[0,\ell] \to G$ is $1$-Lipschitz. 
\end{Lem}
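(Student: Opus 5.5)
The plan is a direct estimate from the definition of the word metric followed by the triangle inequality. Write $d_S$ for the word metric on $G$ relative to $S$, and view $\theta$, as in Remark \ref{rq:paths_as_maps_on_segments}, as a map defined on the integer points $\{0,\dots,\ell\}$ of $[0,\ell]$. We must show that $d_S(\theta(i),\theta(j)) \leq |i-j|$ for all integers $0 \leq i,j \leq \ell$.

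The first step handles consecutive points. By the definition of a combinatorial path, for every $i \leq \ell-1$ there is $s_i \in S$ with $\theta(i+1)=\theta(i)s_i$. By left-invariance of the word metric,
\[
d_S(\theta(i),\theta(i+1)) = d_S(1,s_i) = |s_i|_S \leq 1,
\]
since any element of $S$ is a word of length at most one (it is exactly one unless $s_i=1$).

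The second step treats arbitrary pairs. By symmetry we may assume $i \leq j$. The triangle inequality along the chain $\theta(i),\theta(i+1),\dots,\theta(j)$ then gives
\[
d_S(\theta(i),\theta(j)) \leq \sum_{k=i}^{j-1} d_S(\theta(k),\theta(k+1)) \leq j-i,
\]
which is the $1$-Lipschitz property on $\{0,\dots,\ell\}$.

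There is no real obstacle here. The only point needing care is the interpretation of ``map on $[0,\ell]$''. If one insists on a map on the whole real segment, one can extend $\theta$ to $[0,\ell]$ by sending $[i,i+1)$ to $\theta(i)$. That extension is $1$-Lipschitz only up to an additive constant $1$, which is harmless after rescaling in the asymptotic cone. The cleaner choice is to extend along the unit-length edges of the Cayley graph. This yields a genuinely $1$-Lipschitz map into the Cayley graph, with $\theta(i)$ joined to $\theta(i+1)$ by the edge labelled $s_i$, by the same chain-and-triangle-inequality argument applied to these edges.
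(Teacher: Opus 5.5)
Your proof is correct and is the same argument the paper gives in the sentence preceding the lemma. Consecutive points of a combinatorial path are at word distance at most $1$, and the triangle inequality along the chain then gives the $1$-Lipschitz bound on the integer points, which is how Remark~\ref{rq:paths_as_maps_on_segments} interprets $[0,\ell]$. You are right to write ``at most'' $1$ where the paper says ``exactly'' $1$, since $S$ may contain the identity.
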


In particular, a sequence $(\theta_n)_{n\geq 0}$ of paths of lengths $\ell_n \simeq 
n$ induces 
a $1$-Lipschitz map $\theta = \lim_{\omega}\theta_n : [0,1] \to \cone[G]$, where the segment 
$[0,1]$ is (up to a bi-Lipschitz homeomorphism) the ultralimit of the rescaled sequence
$(\frac{1}{n}[0,\ell_n])_{n\geq 0}$.

\subsubsection{Evaluating words on generators}
Consider now $F_S$, the free group over $S$, so that there is a canonical surjective homomorphism $\ev_S : F_S \to G$.

\begin{Def} \label{def:evaluation_mots_générateurs}
	Let $w=s_1s_2\dots s_{\ell}$ be an element of $F_S$. We call the \emph{evaluation of $w$ in $G$}, denoted $\lev w \rev$, the combinatorial path $(\lev w \rev(i))_{0\leq i \leq \ell}$ based at $1$ in $G$ defined by the sequence
	\begin{align*}
		1, ~ \ev_S(s_1), ~ \ev_S(s_1s_2), ~ \dots ~ \ev_S(s_1s_2\dots s_{\ell}),
	\end{align*}
	meaning that $\lev w \rev = \ev_S(s_1s_2\cdots s_i)$ for $0\leq i \leq \ell$. 
\end{Def}

We will often view the $s_i$ as elements of $G$ and omit writing the projection from $F_S$ to $G$. We also drop the subscript $G$ whenever there is no ambiguity on the group of evaluation.

\begin{Rq}
	The endpoint of $\lev w \rev[]$ is equal to the projection of $w$ in $G$.
	In particular, the path $\lev w \rev[]$ is a \emph{combinatorial loop} whenever $s_1\dots 
	s_{\ell} =1$ in $G$, i.e. when $w$ is a relation in $G$. 
	In that case, the evaluation of the word $w^{-1}$ in $G$ is the reverse loop $\overline{\lev 
	w\rev[]}$. 
\end{Rq}

To get paths based at different points, we will often consider, for $g\in G$, the left translate 
$g\lev w \rev[]$, which is a path based at $g$.

As we wish to evaluate products of words in $F_S$, we introduce the following terminology.

\begin{Def} \label{def:cancellation_length}
	Let $w,w'$ be two words in $F_S$, and let $| \cdot |$ denote the word length. We call 
	\emph{cancellation length} of $w$ with $w'$ the non-negative quantity $|w|+|w'|-|ww'|$. 
\end{Def}

Here, $ww'$ is the product of $w$ and $w'$ in $F_S$, meaning the reduced concatenation of these two words. In particular, the cancellation length is always an even integer, and is zero if and only if the concatenation of $w$ with $w'$ is already a reduced word.

Now the following proposition and its corollary allows us to study metric properties of evaluations of products of words. 

\begin{Prop}[Evaluation of a product] \label{prop:evaluation_product}
	Let $G$ be a locally compact group with a compact generating set $S$. Let $w$ and $w'$ be two words in the free group $F_S$, and let $g$ and $g'$ be their respective projections in $G$. 
	Denote by $\delta$ their cancellation length. Consider the following two paths from $1$ to $g'$ in $G$ :
	\begin{itemize}
		\item $p=(\lev w \rev[]) (g\lev w' \rev[]) $, the concatenation of the evaluations of $w$ and $w'$ in $G$;
		\item $q=\lev ww' \rev[]$, the evaluation in $G$ of the product of $w$ and $w'$ in $F_S$.
	\end{itemize}
	Then $p$ and $q$ 
	differ only by a path of length $\delta/2$ run back and forth.
\end{Prop}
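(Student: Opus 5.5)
We work directly with the words. We may assume $w$ and $w'$ are reduced, since elements of $F_S$ are identified with their reduced representatives. Write $\delta = 2m$, so that $m$ is the number of letters cancelled on each side. By the definition of the cancellation length (Definition \ref{def:cancellation_length}), $m$ is the length of the maximal common piece where the end of $w$ cancels against the beginning of $w'$. Concretely, we write
\[
w = u\,c, \qquad w' = c^{-1} v,
\]
with $|c| = m$, and $uv$ a reduced word, so that $ww' = uv$ in $F_S$ and $|ww'| = |u| + |v| = |w| + |w'| - 2m$.

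Next we compute both paths in terms of $u$, $c$ and $v$. Let $h$ be the projection of $u$ in $G$. Then $q = \lev uv \rev[]$ is the concatenation $(\lev u\rev[])(h\lev v \rev[])$, which is immediate from Definition \ref{def:evaluation_mots_générateurs}: the prefixes of $uv$ are the prefixes of $u$, followed by $u$ times the prefixes of $v$. In the same way, $\lev w\rev[] = (\lev u \rev[])(h\lev c\rev[])$. Since $g = hc$ in $G$, we have $g\lev c^{-1}\rev[] = h\,c\,\lev c^{-1}\rev[]$. The prefixes of $c^{-1}$ are the inverses of the suffixes of $c$, so this path is exactly the reverse path $\overline{h\lev c\rev[]}$. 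Consequently
\[
p = (\lev u\rev[])\,(h\lev c \rev[])\,\bigl(\overline{h\lev c\rev[]}\bigr)\,(h\lev v\rev[]).
\]

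Comparing the two expressions, $p$ is obtained from $q$ by inserting, at time $|u|$ (the point $h$), the path $h\lev c\rev[]$ of length $m = \delta/2$ followed by its reverse. This is precisely the statement. In particular, $p$ and $q$ are homotopic relative to their endpoints, and their images differ by at most the image of $h\lev c\rev[]$.

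No step here is a real obstacle. The only point requiring care is bookkeeping: verifying that the translate $g\lev c^{-1}\rev[]$ coincides pointwise with the reversed translate of $\lev c\rev[]$, including the correct indexing of the points. We check this directly from $c = s_1\cdots s_m$, since $c\,(s_m^{-1}\cdots s_{m-j+1}^{-1}) = s_1\cdots s_{m-j}$.
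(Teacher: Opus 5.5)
Your proof is correct and is the same argument as the paper's. Your decomposition $w=uc$, $w'=c^{-1}v$, with $h$ the projection of $u$, is exactly the paper's $w=w_1z$, $w'=z^{-1}w_2$ with $g_1$ the projection of $w_1$, and it yields the same identity showing $p$ is $q$ with the path $h\lev c\rev[]$ and its reverse inserted at the point $h$; your explicit index check that $g\lev c^{-1}\rev[]=\overline{h\lev c\rev[]}$ just spells out a step the paper leaves implicit.
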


\begin{proof}
	Write $w=w_1z$ and $w'=z^{-1}w'$ with $z$ of maximal length. Then $ww'=w_1w_2$ and the concatenation of $w_1$ with $w_2$ is reduced, hence $\delta=2|z|$. 
	
	Let $g_1$ be the projection of $w_1$ in $G$. 
	Denote $c_1=\lev w_1 \rev[]$, $c_2=g_1 \lev w_2\rev[]$ and $s=g_1\lev z \rev[]$.
	Then the paths $p$ and $q$ satisfy $q=c_1c_2$ and $p=c_1s\overline{s}c_2$.
	In particular, they differ only be the concatenation $s\overline{s}$, where $s$ is a path of length $|z|=\delta/2$.
\end{proof}

\begin{Cor}[Filiform vanishing] \label{cor:filiform_vanishing}
	Let $G$ be a locally compact group with a compact generating set $S$.
	Let $w_n, w_n'$ be two sequences of words in $F_S$ of lengths $\simeq n$ such that the cancellation lengths $\delta_n$ of $w_n$ with $w_n'$ satisfy $\delta_n/n \xrightarrow{n \to \infty} 0$. Let $g_n$ denote the projection of $w_n$ in $G$.
	
	Then the corresponding paths $p_n=\lev w_n \rev[] (g_n\lev w_n' \rev[]) $ and $q_n=\lev w_n w_n' \rev[]$ induce the same continuous map $[0,1] \to \cone[G]$ between asymptotic cones.
\end{Cor}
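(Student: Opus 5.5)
Corollary \ref{cor:filiform_vanishing} should follow from Proposition \ref{prop:evaluation_product} applied at each $n$, together with a parametrization argument.

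First, fix $n$ and write $w_n = w_{1,n}z_n$ and $w_n' = z_n^{-1}w_{2,n}$ as in the proof of Proposition \ref{prop:evaluation_product}, so that $|z_n|=\delta_n/2$. With $c_1, c_2, s$ the corresponding subpaths, we have $q_n = c_1c_2$ and $p_n = c_1 s\overline{s}c_2$. The two paths have lengths $\ell_n = |w_n|+|w_n'|$ and $\ell_n-\delta_n$ respectively, both $\simeq n$ because $\delta_n = o(n)$. After rescaling by $1/n$, both domains converge to the same segment, which after affine reparametrization we take to be $[0,1]$.

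Next, to compare the induced maps pointwise, build a monotone surjection $\phi_n:\{0,\dots,\ell_n\}\to\{0,\dots,\ell_n-\delta_n\}$. It is the identity on the $c_1$ part, is constant equal to the junction index on the $s\overline{s}$ part, and is a shift by $\delta_n$ on the $c_2$ part. Then $d(p_n(i), q_n(\phi_n(i)))\le \delta_n/2$ for every $i$, since along $s\overline{s}$ the path $p_n$ stays within $|z_n|$ of the junction point $g_1$. We also have $|i/\ell_n - \phi_n(i)/(\ell_n-\delta_n)| = O(\delta_n/n)$. The paths are $1$-Lipschitz by Lemma \ref{lem:paths_are_lipschitz}, and the lengths are $\simeq n$. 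Hence for $\tau\in[0,1]$ the points $p_n(\lfloor \tau\ell_n\rfloor)$ and $q_n(\lfloor\tau(\ell_n-\delta_n)\rfloor)$ are at distance $O(\delta_n)+O(\delta_n)=o(n)$.

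Finally, dividing by $n$ and taking the $\omega$-limit gives distance $0$ in $\cone[G]$, so the two ultralimit maps $[0,1]\to\cone[G]$ coincide. The only delicate point is making the identification of domains explicit, that is, choosing a consistent normalization of the segment as the ultralimit of $\frac1n[0,\ell_n]$ for both sequences. This is handled by the $O(\delta_n/n)$ reparametrization estimate and the uniform Lipschitz bound.
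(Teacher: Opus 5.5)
Your proposal is correct and takes the same route as the paper: you apply Proposition~\ref{prop:evaluation_product} for each $n$, so that $p_n$ and $q_n$ differ only by the segment $s$ of length $\delta_n/2=o(n)$ run back and forth, and this segment collapses to a point in $\cone[G]$. Your explicit monotone reparametrization $\phi_n$ just makes rigorous the identification of the two parameter domains, which the paper leaves implicit. One small slip: it is $p_n$, not $q_n$, that has length $\ell_n$.
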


We call \emph{filiform vanishing} this phenomenon that relates concatenation of evaluations to 
evaluation of concatenations, since it consists of segments run back and forth which vanish
asymptotically.

\begin{proof}
	The combinatorial paths $p_n$ and $q_n$ have respective lengths $|w_n|+|w_n'|$ and $|w_n|+|w_n'|-\delta_n$, which are both $\simeq n$ by our assumptions. Viewing them as maps from integer segments to $G$, both are $1$-Lipschitz. 
	Hence they induce continuous maps from $[0,1]=\lim_{\omega} \left(\frac{1}{n}[0,n]\right)$ to $\cone[G]$, say $p_{\omega}$ and $q_{\omega}$. 
	
	Now by Proposition \ref{prop:evaluation_product}, $p_n$ and $q_n$ differ only by a segment 
	$s_n$ of length $\delta_n/2$. Since $\delta_n/n$ tends to zero, the ultralimit of the sequence 
	$(s_n)$ is a single point. 
	Therefore $p_{\omega}=q_{\omega}$. 
\end{proof}

\subsubsection{Evaluating words on elements} 
We wish to evaluate words not only on generators, but on arbitrary elements of the group.  For $n\geq 1$, consider the free group $F_n$ over $n$ generators $C_1,\dots C_n$. For any $n$-uple $\alpha =(\alpha_1,\dots \alpha_n) $ of elements of $F_S$, we have a natural homomorphism $\ev_{\alpha} : F_n\to F_S$ that sends each generator $C_i$ to the word $\alpha_i$.

\begin{Def} \label{def:evaluation_mots_elements}
	Let $\alpha_1,\dots \alpha_n$ be elements of $F_S$ and
	let $W$ be an element of $F_n$. 
	By the \emph{evaluation of $W$ on  $\alpha=(\alpha_1,\dots ,\alpha_n)$ in $G$}, 
	denoted $\lev W(C_1=\alpha_1,\dots C_n=\alpha_n) \rev$, 
	we mean the combinatorial path $\lev \ev_{\alpha}(W) \rev$ in $G$ obtained by first evaluating $C_i$ on $\alpha_i$ and then evaluating the word $\ev_{\alpha}(W)$ in $G$. 
\end{Def}

\begin{Rq}
	Note that the length $\ell$ of the path $\lev W((C_1=\alpha_1,\dots C_n=\alpha_n)) \rev$ satisfies
	\[
		\ell \leq |W|. \max_i |\alpha_i|_S,
	\]
	where $|\cdot|$ (resp. $|.|_S$) denotes the word length in $F_n$ (resp. $F_S$).
\end{Rq}
For convenience, we will often identify the words $\alpha_i$ with their projections $g_i$ in $G$, 
and abusively write $\lev W(C_1=g_1,\dots C_n=g_n) \rev$.

\begin{Ex}[Evaluating words] 
	Let $H=\R\rtimes\R$ be the Lie group model of the hyperbolic plane $\H^2$. Let $x=(1,0)$ and $t=(0,1)$ be basis elements of $H$. Say we consider the group word $V$ over generators $U,X$ defined by 
	\[
		V = UXU^{-1}.
	\]
	Its evaluation on $U=t^n,X=x$ in $H$, namely $\lev V(U=t^n,X=x) \rev[H] $, is the path 
	$v_n=\lev t^nxt^{-n}\rev[H]$ of length $\ell=2n+1$.
	Explicitly, it is the following sequence of points:
	\[
		v_n = (1,t,t^2, \dots t^n, t^nx, t^nxt^{-1}, \dots, t^nxt^{-n}),
	\]
	where the group words over $t$ and $x$ are viewed here as elements of $H$. 
	Formally, we may view $v_n$ as a map $\{0, \dots 2n+1\} \to \H^2$. In fact, we will see in Section \ref{sec:geom_loops} that this path is a quasi-geodesic segment, with quasi-isometry constants independent of $n$.
\end{Ex}	

\begin{Not} \label{not:capitals_for_words_lowercase_for_evaluations}
	Our convention, whenever possible, is to use capital letters for words in the free group and 
	the corresponding lowercase for their evaluations, adding a subscript whenever there is a 
	dependence in the evaluation. For example, see the preceding example with the word $V$ and its 
	evaluations $v_n$, or $\Theta_k$ and the paths $\theta_{k,n}$ in Section \ref{sec:geom_loops}.
\end{Not}

\subsection{Estimates of length in the groups $\SOL$}\label{sec:length}

We deal here with estimates of length in solvable Lie groups. To turn them into metric spaces, we view them as compactly generated groups and use word-length metrics, just like with discrete groups. Any two left-invariant quasi-geodesic metrics on such a group are quasi-isometric, so the following estimates do not depend on our choices. In particular, these statements also hold when considering distances coming from left-invariant Riemannian metrics.

Recall that in a pointed metric space $(X,d,o)$ , we denote by $|x| \defeq d(x,o)$ the 
\emph{length} of $x$. For example, in a group $G$ endowed with a left-invariant metric, $|g|\defeq 
d(g,1_G)$.

The following length estimate is a key ingredient in the metric computations of Sections 
\ref{sec:geom_loops} to \ref{sec:change_scale}.

\begin{Prop}[{\cite[Thm. 6.B.2]{cornulier2017geometric}}] \label{prop:dist_SOL} 
	In $\SOL=\R^2 \rtimes_{(1,-\alpha)} \R$, we have the following estimate:
	\[
	|(x,y,t)| \simeq |t| + \log\left(1+|x|+|y|\right), ~ \forall (x,y,t) \in \SOL.
	\]
\end{Prop}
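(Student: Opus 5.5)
We prove the two inequalities $|(x,y,t)| \preceq |t| + \log(1+|x|+|y|)$ and $|t| + \log(1+|x|+|y|) \preceq |(x,y,t)|$ separately. We use the word metric for a compact generating set such as $S=\{(x,y,t): |x|,|y|,|t|\le 1\}$; by the Milnor--Schwarz lemma any other choice gives a roughly equal length.

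\emph{Upper bound.} Write $(x,y,t) = (x,0,0)\,(0,y,0)\,(0,0,t)$. The last factor has length $\preceq |t|$, so by the triangle inequality it suffices to show $|(x,0,0)| \preceq \log(1+|x|)$, and likewise for $y$. The $x$-coordinate lies in the subgroup $\sg[x]\rtimes\sg[t]$, which is a copy of $\H^2$. There the classical distortion estimate is obtained by conjugation. Set $m=\lceil \log(1+|x|)\rceil$. Then $t^m (e^{-m}x,0,0) t^{-m} = (x,0,0)$, and $|e^{-m}x|\le 1$, so this word has length at most $2m+1$.

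For $y$ we argue in the same way, conjugating by $t^{-m'}$ with $m'=\lceil \log(1+|y|)/\alpha\rceil$, since $t^{-s}$ expands $y$ by $e^{\alpha s}$. This gives $|(0,y,0)| \preceq \log(1+|y|)$. Combining the three estimates with $\log(1+|x|)+\log(1+|y|) \le 2\log(1+|x|+|y|)$ yields the upper bound.

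\emph{Lower bound.} Let $g=(x,y,t)$ be a product $s_1\cdots s_\ell$ of generators, with $s_i=(a_i,b_i,\tau_i)$. The projection $\SOL\to\R$, $(x,y,t)\mapsto t$, is a homomorphism, which gives $|t|\le \ell$. For the other coordinates, expand the product using the group law:
\[
x=\sum_i e^{\tau_1+\dots+\tau_{i-1}} a_i, \qquad y=\sum_i e^{-\alpha(\tau_1+\dots+\tau_{i-1})} b_i .
\]
Each partial sum of the $\tau_j$ has absolute value at most $\ell$. Hence $|x|\le \ell e^{\ell}$ and $|y|\le \ell e^{\alpha\ell}$, so $\log(1+|x|+|y|) \le C\ell + C$ with $C$ depending only on $\alpha$. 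Therefore $|t|+\log(1+|x|+|y|) \preceq \ell$, which is the required bound.

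Neither direction is a real obstacle; this is the standard exponential-distortion computation. The step needing the most care is the lower bound. There one must check that the partial-sum expansion is written in the right order for the semidirect product law $(x,y,t)(x',y',t')=(x+e^tx',\,y+e^{-\alpha t}y',\,t+t')$, and that the constants stay uniform.
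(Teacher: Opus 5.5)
Your proof is correct and follows the paper's argument essentially step for step. For the lower bound, both expand the coordinates of a product of $\ell$ generators to get $|t|\le \ell$ and $|x|,|y|\le \ell e^{C\ell}$; for the upper bound, both treat each coordinate separately via conjugation by $t^{\pm m}$ with $m$ logarithmic in $|x|$ or $|y|$. Your explicit decomposition $(x,y,t)=(x,0,0)(0,y,0)(0,0,t)$ and your choice $m'=\lceil \log(1+|y|)/\alpha\rceil$ only spell out what the paper leaves implicit.
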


\begin{Rq}
	Note that for nonnegative numbers $u$ and $v$, we have the rough equality 
	\[
	\log(1+u+v) \simeq \log(1+u) + \log(1+v), 
	\]
	allowing us to consider the $x$ and $y$ coordinates separatly in the estimate from the 
	proposition. The former estimate can be obtained by taking logarithms in the following 
	inequality, holding for $u,v\geq 0$: $ 1+u+v \leq (1+u)(1+v) \leq (1+u+v)^2$.
\end{Rq}

\begin{Rq}
	This proposition is a restatement of \cite[Thm. 6.B.2]{cornulier2017geometric} in the case of 
	the group $\SOL$. The general theorem gives estimates of length in solvable Lie groups of the 
	form $G=U\rtimes A$ with $U$ unipotent and $A$ abelian, using contracting actions in so-called 
	\emph{tame subgroups} which have nonpositive curvature properties.
	
	Here, $U=\R^2$ is the $xy$-plane and $A=\R$ is the $t$-line. The tame subgroups are 
	$U_1=\sg[x]\rtimes \sg[t]$ and $U_2=\sg[y]\rtimes \sg[t]$ which are both quasi-isometric to 
	$\H^2$, so indeed of negative curvature.
\end{Rq}

We give the proof of Proposition \ref{prop:dist_SOL} to highlight the key ideas in this simple case.

\begin{proof}
We use the word metric associated to the compact generating subset $S=[-1,1]^3$ of $G=\SOL$. We 
then establish the two rough inequalities between $|(x,y,t)|_S$ and $|t|+\log(1+|x|+|y|)$.

\emph{Inequality $\succeq$.} We take $g=(x,y,t) \in G$ of length $n\geq 1$, and want to show that its coordinates satisfy $\log(1+|x|+|y|) \preceq n$ and $|t|\preceq n$. Since $g$ has length $n$, it can be written as $g=s_1\dots s_n$ with $s_i \in S$. We write $s_i=(x_i,y_i,t_i)$ with $x_i,y_i,t_i$ in $[-1,1]$. 
Using the group law of $\SOL$, we then compute
\[
	s_1s_2=(x_1+e^{t_1}x_2, y_1+e^{-\alpha t_1}y_2,t_1+t_2).
\]
Repeating this computation, we get by induction the equalities
\begin{align*}
	&t = t_1 + \dots t_n;\\
	&x = x_1 + e^{t_1}x_2 + e^{t_1+t_2}x_3 + \dots + e^{t_1+\dots +t_{n-1}}x_n; \\
	&y = y_1 + e^{-\alpha t_1}y_2 + e^{-\alpha(t_1+t_2)}y_3 + \dots + e^{-\alpha(t_1+\dots +t_{n-1})}y_n. 
\end{align*} 

Hence we have $|t|\leq n$,  and $|x|$ and $|y|$ are bounded by $ne^{Cn}$, where 
$C=\max(\alpha,1)$. Taking the logarithm 
of the sum, we get that $\log(1+|x|+|y|)\preceq Cn+\log(n) \preceq n$. 

\emph{Inequality $\preceq$.} Let now $g=(x,y,t) \in G$. 
The goal is to write $g$ as a product of $n$ elements of $S$, where $n\preceq |t| +\log(1+|x|+|y|)$. It is enough to do it separatly for each coordinate. 
First, $(0,0,t).(0,0,1)^{-\lfloor t \rfloor} = (0,0,t-\lfloor t \rfloor)$ lies in $S$, so we have 
$|(0,0,t)|_S \preceq |t|$. 

Next, we use the contracting action of $t$ on $x$ to deal with this coordinate. Indeed, conjugating 
$(x,0,0)$ by $(0,0,1)^k$ yields the element $(e^{-k}x,0,0)$. 
This lies in $S$ for $k = \lceil\log(1+|x|) \rceil$, 
hence $|x|_S \leq 2k+1 \preceq \log(1+|x|)$. By using negative powers of $(0,0,1)$, 
the same works for $y$.

Having established these two rough inequalities completes the proof.
\end{proof}

\subsection{Hawaiian earrings and spaces of covering dimension $1$}

\subsubsection{The Hawaiian earring space and its fundamental group} 
\label{subsec:hawaiian_earring}

Recall that we denote by $\Earr$ the Hawaiian earring space, meaning the union of countably many 
circles that are all tangent to a given basepoint and whose radii tend to 
zero. Note that since the 
lengths of the circles tend to zero, the fundamental group $\pi_1(\Earr)$ is \emph{much larger} 
than the free group on countably many generators. Indeed, it also contains loops that go around 
infinitely many circles, as long as each circle appears only a finite number of times ; this can be 
made precise as follows.

By collapsing all but finitely many circles onto the basepoint, the Hawaiian earring retracts 
onto finite wedge of circles. 
This defines, for every $n\geq 1$, a surjective morphism from $\pi_1(\Earr)$ to the free group 
$F_n=\langle c^{(1)}, \dots, c^{(n)} \rangle$, where $c^{(l)}$ is a generator of the fundamental 
group of 
the $\ell$-th circle.
The restriction of the retraction induces also a surjective homomorphism $F_n \to F_{n-1}$ for 
every 
$n\geq 2$, which sends $c^{(n)}$ to $1$. This makes the groups $(F_n)_{n\geq 1}$ into a projective 
system, and we get a natural homomorphism $\pi_1(\Earr) \to \varprojlim_n F_n$.
Recall that this projective limit is the subgroup of $\prod_{n \geq 1} F_n$ consisting of sequences 
$(f_n)_{n \geq 1}$ where, for every $n\geq 2$, the element $f_{n-1}$ is the image of $f_n$ under 
the natural 
map $F_n \to F_{n-1}$.
This discussion leads to the following characterization, which will be of use in Section 
\ref{sec:change_scale}.

For $\ell\geq 1$ and an element $x \in F_n$, define $w_{\ell}(x)$ to be the number of 
occurrences of $c^{(\ell)}$ and of its inverse in the reduced word representation of $x$.
\begin{Thm}[\cite{morgan1986van}] \label{thm:description_pi_1_Earr}
	The map $\pi_1(\Earr) \to \varprojlim_n F_n$ is injective, with image the subgroup 
	\[
		\left\{ 
		(f_i) \in \varprojlim_{n \geq 1} F_n : 
		\text{for all } \ell \geq 1, \text{ the sequence } 
	 	(w_{\ell}(f_n))_{n\geq 1}
		\text{ is bounded}
		\right\}.
	\]
\end{Thm}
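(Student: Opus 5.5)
Throughout, $\Earr_n$ denotes the finite union of the first $n$ circles of $\Earr$, a wedge of $n$ circles, so that $F_n=\pi_1(\Earr_n)$; this notation is local to this plan.

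The proof splits into injectivity and identification of the image. For injectivity, I would take a loop $\gamma:[0,1]\to\Earr$ based at the wedge point whose image in every $F_n$ is trivial, and show it is null-homotopic. The open set $\gamma^{-1}(\Earr\setminus\{*\})$ is a countable disjoint union of open intervals $I_j$. On each $I_j$, $\gamma$ stays inside a single punctured circle, and so it is either a closed excursion that is homotopic rel endpoints to the constant path, or a path winding some integer number of times around one circle $c^{(\ell)}$. Since $\gamma$ is uniformly continuous and the circles have diameters tending to zero, only finitely many intervals $I_j$ wind around circles of index $\leq n$, for each $n$. First I would homotope away all non-winding excursions. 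This is done simultaneously for all of them, and the homotopy is continuous because the excursions have diameters tending to zero. After this step, $\gamma$ is coded by a ``word'' indexed by a countable linear order: each letter is some $(c^{(\ell)})^{\pm1}$, and each $\ell$ appears only finitely often.

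The retraction to $\Earr_n$ deletes the letters of index $>n$ and gives a finite word that reduces to $1$ in $F_n$. The key step, which I expect to be the main obstacle, is to produce a coherent cancellation across all $n$. For each $n$, choose a cancellation pairing of the finite word, that is, a noncrossing matching of inverse letters. By a compactness argument, we can arrange these pairings to be compatible under deletion of higher-index letters. Concretely, for each fixed finite set of letters there are only finitely many possible pairings, so a König's lemma or diagonal argument gives a single noncrossing matching of the whole countable word that restricts correctly. I would then build the null-homotopy as a disc map: each matched pair bounds a region of the disc, and the noncrossing structure defines a dendrite-like quotient of $[0,1]$ through which $\gamma$ extends. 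Continuity at the wedge point follows because only finitely many pairs involve circles of index $\leq n$, so all but finitely many regions map into small neighborhoods of $*$. This is Morgan--Morrison's argument, and the delicate points are handling the compatibility of the matchings and checking the continuity of the extension.

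For the image, I would first note that a genuine loop $\gamma$ has only finitely many winding intervals on circle $\ell$. So $w_\ell(f_n)$ is bounded by the number of letters of index $\ell$, independently of $n$, and the image lies in the stated subgroup. Conversely, take a compatible sequence $(f_n)$ with bounded $w_\ell$. Boundedness together with compatibility shows that, for each $\ell$, the occurrences of $c^{(\ell)}$ in $f_n$ eventually stabilize in a way compatible with the order of the letters of index $\leq\ell$. Passing to a limit (again by a diagonal argument), this gives a countable linearly ordered word with finitely many letters of each index. I would realise this word as a loop by placing the letters in nested subintervals of $[0,1]$, with lengths decaying so that the letters of index $\ell$ are traversed in time at most $2^{-\ell}$; continuity then follows from the radii tending to zero. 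Its image in each $F_n$ is $f_n$ by construction, which proves surjectivity onto the subgroup.
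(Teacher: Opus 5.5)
The paper does not prove this statement. It is imported from Morgan--Morrison, with a pointer to de Smit's short proof and to the Cannon--Conner transfinite-word description, and then used as a black box in two places: injectivity in Corollary~\ref{cor:Phi_pi_1_injective}, and the description of the image in the homology argument, to know that the sequences $\bigl((c^{(1)})^{x_1}\cdots (c^{(n)})^{x_n}\bigr)_n$ come from $\pi_1(\Earr)$. So there is no internal argument to compare with. Your outline is a sound, self-contained reconstruction of the standard transfinite-word proof. Its advantage is that it shows the theorem rests only on uniform continuity (finitely many winding intervals per circle, shrinking excursions) and a compactness argument; the paper instead buys brevity with a citation.

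A few steps should be made explicit when you write it out.

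\emph{The K\"onig step.} The sets $\mathcal{P}_n$ of cancellation matchings of $W_n$ are finite and nonempty. What makes them an inverse system is that restricting a matching of $W_{n+1}$ to the letters of index $\le n$ gives a matching of $W_n$. This holds because matched letters are mutually inverse, hence of the same index, and noncrossingness passes to sub-matchings.

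\emph{The null-homotopy.} Before building the dendrite, reparametrize each winding interval as a standard traversal. This simultaneous homotopy is continuous because each circle carries only finitely many intervals, and afterwards partner intervals are exact reverses of each other. A clean construction of the tree is then as follows. Pick weights $\lambda_P>0$ with $\sum_P \lambda_P<\infty$. Let $h\ge 0$ be the continuous ``Dyck path'' that rises linearly by $\lambda_P$ across the first interval of each pair $P$ and falls across its partner. Take the real tree coded by $h$, identifying $s\le s'$ when $h(s)+h(s')=2\min_{[s,s']}h$. One checks that $\gamma$ is constant on the fibres, and $h(0)=h(1)=0$, so $\gamma$ factors through a loop in a contractible space.

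\emph{Surjectivity.} No diagonal argument is needed. Since $f_n$ is obtained from $f_{n+1}$ by deleting letters and freely reducing, $w_\ell(f_n)$ is nondecreasing in $n$. Boundedness therefore makes it eventually constant, after which the subsequence of letters of index $\le\ell$ in $f_n$ is literally independent of $n$. The letters of the limit word should be placed in pairwise disjoint (not nested) intervals in the correct order. For example, give letter $x$ length $\lambda_x$, with $\sum_x\lambda_x<\infty$, and start it at $\sum_{y<x}\lambda_y$.
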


See \cite{desmit1992fundamental} for a short proof of this result. In fact, elements of 
$\pi_1(\Earr)$ are precisely the \emph{transfinite words} 
over the countable alphabet $\{c^{\ell} :\ell \geq 1\}$, see \cite[Thm. 
3.8]{cannon2000combinatorial}.

Using the previous theorem, one shows the following.

\begin{Cor}[see \cite{desmit1992fundamental}]
	The group $\pi_1(\Earr)$ is uncountable, nonfree, and
	contains a free group on uncountably many generators.
\end{Cor}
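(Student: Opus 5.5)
The plan is to derive all three properties from Theorem \ref{thm:description_pi_1_Earr}, which identifies $\pi_1(\Earr)$ with the subgroup $E \subset \varprojlim_n F_n$ of coherent sequences $(f_n)$ in which each letter $c^{(\ell)}$ occurs a bounded number of times.

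\emph{Uncountability.} For each subset $A \subset \N_{\geq 1}$, define $f^A_n \in F_n$ as the product, in increasing order, of the letters $c^{(\ell)}$ with $\ell \in A$ and $\ell \leq n$. The retraction $F_n \to F_{n-1}$ deletes $c^{(n)}$, so these elements are compatible. Each letter occurs at most once, so $f^A \in E$. Distinct subsets give distinct elements, which yields $2^{\aleph_0}$ elements.

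\emph{Free subgroup on uncountably many generators.} I would refine the construction so that reduced words at finite levels stay long and never cancel. Fix an uncountable almost disjoint family $\{A_i\}_{i \in I}$ of infinite subsets of $\N$, for instance branches of the binary tree encoded in $\N$. Set $g_i = f^{A_i'}$, where $A_i'$ is built from $A_i$ by using pairs of letters $c^{(2\ell)}c^{(2\ell+1)}$ as markers, so that different generators begin differently. Now take a nontrivial reduced word $u = g_{i_1}^{\epsilon_1}\cdots g_{i_k}^{\epsilon_k}$. Since the $A_i$ are pairwise almost disjoint, there is a level $N$ beyond which each pair of consecutive distinct indices has already separated. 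In $F_n$ for large $n$, the images of the $g_i$ then have a long tail part specific to each index. Any cancellation between consecutive factors only consumes a bounded prefix or suffix, so the projection of $u$ in $F_n$ is nontrivial. The main obstacle is arranging this non-cancellation cleanly. The simplest fix is to make each $g_i$ a palindrome-free positive word whose first and last letters encode $i$ at arbitrarily deep levels, and then check that the free reduction of $u$ leaves a letter $c^{(m)}$ unmatched for some large $m$.

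\emph{Non-freeness.} Following de Smit, I would use the fact that in a free group every element has only finitely many roots in the sense of being divisible by infinitely many integers: no nontrivial element of a free group is an $n$-th power for infinitely many $n$, and more precisely there is no infinitely divisible tower. Instead I would exhibit a nontrivial homomorphism obstruction. Consider elements $x_\ell = c^{(\ell)}$ and define the infinite product $y_m = [x_m, x_{m+1}][x_{m+1}, x_{m+2}]\cdots \in E$; this is allowed because each letter occurs boundedly often. These satisfy $y_m = [x_m, x_{m+1}]\, y_{m+1}$. If $E$ were free, then the abelianization map to $\Z$ sending $x_1$ to $1$ and killing the others would give one obstruction. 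More robustly, I would use Higman's argument: in a free group $F$, every element outside $[F,F]$, or every nontrivial element, has bounded ``commutator length-type'' complexity along such recursive chains, so some element $y_m$ would have to be a product of fewer commutators than forced by its image in each $F_n$. Concretely, I expect to show that $y_1$ is a product of $k$ commutators in $E$ for every $k$, while its image in each $F_n$ needs about $n$ commutators. This contradicts freeness, since in a free group the images under all homomorphisms to free groups have bounded commutator length once the element itself is fixed. This last step is the delicate part; if it fails, I would fall back on citing \cite{desmit1992fundamental} directly.
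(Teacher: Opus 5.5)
\textbf{The gap is in the non-freeness part.} Your commutator-length argument cannot detect freeness. The inequality $\mathrm{cl}(h(g))\le \mathrm{cl}(g)$ holds for every homomorphism $h$ between arbitrary groups, so it is not a special feature of free groups.

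Write $p_n\colon E\to F_n$ for the projections. Your computation actually shows that $p_n(y_1)=[x_1,x_2]\cdots[x_{n-1},x_n]$ has commutator length growing like $n/2$ in $F_n$. Hence $y_1$ is not a finite product of commutators at all, i.e.\ $y_1\notin[E,E]$. So the other half of your plan, that $y_1$ is a product of boundedly many commutators in $E$, is false. The identity $y_1=[x_1,x_2]\cdots[x_k,x_{k+1}]\,y_{k+1}$ does not give it, since $y_{k+1}\notin[E,E]$ either. And $y_1\notin[E,E]$ is perfectly compatible with $E$ being free. Your preliminary remarks on divisibility and on a map to $\Z$ are never turned into an argument.

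What is missing is a genuinely new input. One example is Higman's theorem that every homomorphism from $\pi_1(\Earr)$ to a free group factors through one of the retractions $\pi_1(\Earr)\to F_N$. Applied to the identity of a hypothetically free $E$, it would make $E$ countable. It is also what would rescue your computation: it forces every homomorphism $E\to\Z$ to kill $y_1$, because $p_N(y_1)\in[F_N,F_N]$. That is impossible if $E^{ab}$ is free abelian and $y_1\notin[E,E]$.

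As written, the only sound option in your proposal is the fallback of citing \cite{desmit1992fundamental}. That is exactly what the paper does: it proves only the existence of the uncountable free subgroup, which also gives uncountability, and refers to de Smit for non-freeness.

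\textbf{Free subgroup: your route differs from the paper's and is heavier than necessary.} The paper takes an almost disjoint family $(I_x)$. For distinct $x_1,\dots,x_n$ it chooses $\ell_j\in I_{x_j}\setminus\bigcup_{k\neq j}I_{x_k}$ and retracts $\Earr$ onto the circles $c^{(\ell_1)},\dots,c^{(\ell_n)}$. This sends $u_{x_j}$ to the basis element $c^{(\ell_j)}$ of a free group, so every nontrivial reduced word in the $u_{x_j}$ survives, with no cancellation analysis at all.

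Your deep-level argument can be completed, but the markers and palindrome conditions are unnecessary. For the plain products $g_i=f^{A_i}$:
\begin{enumerate}
\item junctions $g_ig_{i'}$ and $g_i^{-1}g_{i'}^{-1}$ never cancel;
\item $g_ig_{i'}^{-1}$ with $i\neq i'$ cancels only the common final segment of $A_i\cap[1,n]$ and $A_{i'}\cap[1,n]$, which is empty for $n$ large;
\item $g_i^{-1}g_{i'}$ cancels only the finite common initial segment of $A_i$ and $A_{i'}$.
\end{enumerate}
Since $|A_i\cap[1,n]|\to\infty$, for $n$ large each factor keeps a nonempty middle, so $p_n(u)\neq 1$. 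Your uncountability argument is correct.
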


To see that it contains a free group on uncountably many generators, take $(I_x)_{x \in \R}$ a 
continuum of infinite subsets of $\N$ such $I_x\cap I_y$ is finite for $x\neq y$:
for example, identify $\N$ with $\Q$ and take $I_x$ to be an infinite subset having $x$ as its 
unique accumulation point. 
Then, for $x \in \R$, the elements $u_x=\prod_{\ell \in I_x} c^{(\ell)}$, 
where the product is done in increasing order, generate a free group in $\pi_1(\Earr)$. 
Indeed, given distinct $x_1,\dots,x_n$ in $\R$, 
choose $\ell_j \in I_{x_j} - \bigcup_{k\neq j} I_{x_k}$ and retract $\Earr$ onto the union of the 
circles $c^{(\ell_1)}, \dots c^{(\ell_n)}$. This maps $u_{x_j}$ onto the circle $c^{(\ell_j)}$, 
showing that $u_{x_1}, \dots, u_{x_n}$ generate a free group. For a much more general proof of 
this, see  {\cite[Thm 5.3]{cannon2000combinatorial}}.

\begin{Rq}
	Although $\pi_1(\Earr)$ is not free, it is locally free, meaning that all of its finitely 
	generated subgroups are free. This is a consequence of the fact that inverse limits of free 
	groups are locally free, see \cite[Thm 2.5]{cannon2000combinatorial}.
\end{Rq}

\begin{Rq}
	The first homology group of the Hawaiian earrings, namely the abelianization of $\pi_1(\Earr)$,
	can also be computed using the theory of algebraically 
	compact abelian groups, see \cite[Thm 3.1]{eda2000singular}. 
	But the resulting isomorphism is not explicit and will not be of use here.
	However, $\Ho_1(\Earr)$ contains an explicit copy of the Baer-Specker group $\Z^\N$, made out 
	of 
	the image of the loops which have a non-zero winding number around at least one circle. This 
	subgroup plays a role in Section \ref{sec:change_scale} as our source of big homology groups 
	(see Theorem \ref{thm:main_homology}).
\end{Rq}

Our goal is to produce homotopically embedded copies of $\Earr$. Let us present the tools we use to 
achieve this.

\subsubsection{Covering dimension} \label{subsec:cov_dim}

We rely on dimension arguments, namely covering dimension. This topological invariant is defined, for a normal topological space $X$, as the smallest integer $n$ such that every open cover of $X$ has a refinement of 
order at most $n+1$. 
Here, the \emph{order} of an open cover $\mathcal{U}$ is the largest number (possibly infinite) of 
sets in $\mathcal{U}$ that have nonempty intersection. 
For example, the euclidian space of dimension $n$ has covering dimension $n$, and ultrametric 
spaces have covering dimension $0$.
For more on this, see \cite{engelking1978dimension}.

We make use of the following result, due to Burillo.

\begin{Thm}[{\cite[Section 7.3]{burillo1999dimension}}] \label{thm:dim_cone_SOL}
	Every asymptotic cone of $\SOL$ has covering dimension $1$.
\end{Thm}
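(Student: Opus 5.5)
The plan is to prove the two inequalities $\dim \geq 1$ and $\dim \leq 1$ separately. The lower bound is the easy direction. The path $s \mapsto (0,0,s)$ is a geodesic in $\SOL$: by Proposition \ref{prop:dist_SOL} we have $|(0,0,s)| \simeq |s|$, and the projection $(x,y,t)\mapsto t$ is Lipschitz. So the $t$-axis is a quasi-geodesic line, and by Proposition \ref{prop:maps_induced_between_cones} it induces a bi-Lipschitz embedding $\R \to \cone$. Covering dimension is monotone on closed subsets of metric spaces, and the image of this embedding is closed since it is complete. Hence $\dim \cone \geq 1$, and in particular the cone is not $0$-dimensional.

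For the upper bound I would use the horocyclic-product picture. The map $(x,y,t)\mapsto ((x,t),(y,t))$ is a quasi-isometric embedding of $\SOL$ into $\H^2\times\H^2$, with image of the form $\{b_1+\alpha b_2=0\}$ up to bounded error, where the $b_i$ are Busemann functions. Passing to cones, $\cone$ is bi-Lipschitz to the subset $X=\{(p,q)\in T_1\times T_2 : \beta_1(p)+\alpha\beta_2(q)=0\}$. Here $T_1,T_2$ are the real trees $\Cone_\omega(\H^2)$ and the $\beta_i$ are the limiting Busemann functions, which are $1$-Lipschitz. Let $h=\beta_1|_X : X\to\R$. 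This height map is Lipschitz, and on a fibre $h^{-1}(a)$ both coordinates lie on horospheres. The key metric fact about a real tree is the following: for two points $p,p'$ at heights in $[a,a+\varepsilon]$, declare them equivalent when the geodesic $[p,p']$ stays above height $a-\varepsilon$. This is an equivalence relation, since in a tree the lowest point of a triangle's sides is controlled ultrametrically. Its classes have diameter at most $4\varepsilon$, and distinct classes are at distance at least $2\varepsilon$ from each other. Applying this in $T_1$ (height $\beta_1$) and in $T_2$ (height $\beta_2$, on the interval $[-(a+\varepsilon)/\alpha,-a/\alpha]$) and taking products, the slab $h^{-1}([a,a+\varepsilon])$ is partitioned into sets of diameter $O(\varepsilon)$ that are pairwise $c\varepsilon$-separated.

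From this partition I build the covers. Take the two families of slabs $h^{-1}([2k\varepsilon,(2k+1)\varepsilon])$ and $h^{-1}([(2k+1)\varepsilon,(2k+2)\varepsilon])$, enlarged slightly to open sets. Within each family the slabs are disjoint and separated, and within one slab the pieces are separated. Thickening each piece by $c\varepsilon/3$ therefore gives, for each family, a disjoint and uniformly locally finite collection of open sets. The union of the two collections covers $X$, has mesh $O(\varepsilon)$, and has order at most $2$. Since $\cone$ is a metric space that is not separable in general, I would conclude with the characterisation of dimension for arbitrary metric spaces in \cite{engelking1978dimension} (Katětov--Morita / Nagata--Ostrand type). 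That result says $\dim\leq n$ as soon as there exist locally finite open covers of arbitrarily small mesh and order at most $n+1$; refinements of an arbitrary open cover are obtained by a Lebesgue-type argument locally, or one can directly check the equivalent "$n+1$ discrete families" criterion, which is exactly what our construction provides.

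The main obstacle I expect is this last step. I need to make sure the tree partition is genuinely separated, i.e.\ verify the ultrametric inequality for the "meeting height" on horospherical slabs, including the fact that the trees are not locally compact. I also need to invoke a dimension criterion valid for non-separable metric spaces, rather than the compact-case statement "small-mesh covers of order $n+1$". If identifying $\cone$ with the horocyclic product is considered too heavy, the fallback is to carry out the same partition directly in $\SOL$ at scale $n$ using Proposition \ref{prop:dist_SOL}. There, points at heights in $[a,a+\varepsilon]n$ are grouped by whether $|x-x'|\leq e^{(a-\varepsilon)n}$ and $|y-y'|\leq e^{-\alpha(a+\varepsilon)n}$, and the separation and diameter bounds are checked in the ultralimit.
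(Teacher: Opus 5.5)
Your lower bound is correct, and so is your tree lemma: the height of the confluence point of two rays toward the common end satisfies the ultrametric inequality, which gives your bounds $4\varepsilon$ and $2\varepsilon$ without any use of local compactness. Your route also differs from the paper's. The paper proves nothing here; it cites Burillo and sketches the mechanism: the $xy$-plane carries a $\log$-metric whose cones are ultrametric, so only the $t$-direction contributes. Your slab-and-piece covers of the horocyclic product are a concrete version of that heuristic.

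The genuine gap is the last step. The criterion you invoke, that locally finite open covers of arbitrarily small mesh and order at most $n+1$ force $\dim\le n$, is false for non-compact metric spaces. That condition defines Kat\v{e}tov's metric dimension $\mu\dim$, and in general one only has $\mu\dim X\le\dim X\le 2\mu\dim X$, with strict inequality possible: Sitnikov's subset of $\R^3$ has $\mu\dim=1$ and $\dim=2$. So, as written, you only obtain $\dim\cone\le 2$. Your fallbacks do not close this either. Ostrand's ``$n+1$ discrete families'' criterion must be verified for \emph{every} open cover. An arbitrary open cover of $\cone$ need not have a Lebesgue number, so covers built at a single global scale $\varepsilon$ need not refine it. Letting $\varepsilon$ vary from region to region instead would destroy the separation you rely on.

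What rescues the argument is a uniformity your covers already have. Let $L$ be a Lipschitz constant for $h$ and $\delta$ the amount by which you enlarge the slabs. A ball of radius $<\min(\delta/L,\,c\varepsilon/2)$ about $z$ lies in the enlarged slab whose unenlarged interval $[m\varepsilon,(m+1)\varepsilon]$ contains $h(z)$. Inside that slab it meets only one piece. (Pieces of an open slab are relatively clopen, hence already open in $X$, so no thickening is needed.) Thus your cover $\mathcal U_\varepsilon$ has a positive Lebesgue number $\lambda_\varepsilon$, in addition to being locally finite, of order $\le 2$ and of mesh $O(\varepsilon)$.

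Now choose $\varepsilon_{i+1}$ so small that $\operatorname{mesh}\mathcal U_{\varepsilon_{i+1}}<\lambda_{\varepsilon_i}$. Then the closures of the members of $\mathcal U_{\varepsilon_{i+1}}$ refine $\mathcal U_{\varepsilon_i}$. The Dowker--Hurewicz theorem states that a metric space with a sequence of locally finite open covers of order $\le n+1$, mesh tending to $0$, each closure-refining the previous one, has $\dim\le n$. Applied here it gives $\dim\cone\le 1$, and together with your lower bound this proves the statement.
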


This comes from the fact that the $xy$ plane in $\SOL$ is exponentially distorted (it is the exponential radical). This means that the restriction of the metric to this plane is a so-called $\log$-metric, of the form $\log(1+d)$. But one can see that asymptotic cones of $\log$-metrics are ultrametrics, which have covering dimension zero. 
Roughly speaking, only the $t$ direction has positive dimension in the asymptotic cone, hence the result. See \cite[Prop. 11, Section 6]{burillo1999dimension} for more on this.

\begin{Rq}
	Building upon Burillo's ideas, Cornulier shows in \cite{cornulier_dimension_2008} that for every simply connected solvable Lie group $G$, the space $\cone[G]$ has covering dimension bounded above\footnote{
		\cite{cornulier_dimension_2008} actually proves that these two dimensions are equal, but 
		the reverse inequality is not related to Burillo's work.}
	 by $\dim(G/\Rexp)$. This shows that while asymptotic cones of Lie groups might fail to be locally compact, they still remain ``not too big'', in the sense that they are finite dimensional.
\end{Rq}

We make use of Theorem \ref{thm:dim_cone_SOL} by using the following characterization. Spaces of covering dimension $1$ 
are precisely the normal topological spaces where
maps from closed subspaces to the circle $\S^1$ can always be globally extended (see \cite[Thm 
3.2.10]{engelking1978dimension}).

Building upon this characterization and using Tietze's extension theorem, Burillo establishes that 
maps from closed subspaces to the Hawaiian earrings can always be globally extended in 
spaces of covering dimension $1$. Moreover, Burillo's proof can be adapted to the case of a finite 
wedge of circles. Hence the following.

\begin{Thm}[{ \cite[Thm 3.2.10]{engelking1978dimension}, \cite[Thm. 19]{burillo1999dimension}}]
	\label{thm:extension_towards_Earr_dim1}
	Let $X$ be a metric space of covering dimension $1$ 
	and let $E$ be either $\S^1$, $(\S^1)^{\vee k}$ or $\Earr$. 
	Then for every closed subset $A$ of $X$ and every continuous map 
	$f : A \to E$, there exists a continuous extension $F:X \to E$.
\end{Thm}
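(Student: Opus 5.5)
We plan to avoid Burillo's case-by-case argument and prove a single statement covering all three targets. The statement is: if $X$ is a metric space with $\dim X\le 1$ and $E$ is a compact, path-connected and locally path-connected metric space, then every continuous $f:A\to E$ on a closed set $A\subset X$ extends to $X$. This is the case $n=1$ of the Kuratowski--Dugundji extension theorem, where the hypotheses $C^{0}$ and $LC^{0}$ reduce to path-connectedness and local path-connectedness. The three spaces $\S^1$, $(\S^1)^{\vee k}$ and $\Earr$ are compact, path-connected and locally path-connected. This is easy for $\Earr$ as well: near the basepoint, a small ball is a union of arcs through the basepoint, namely pieces of the large circles together with whole small circles. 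Note that a naive reduction to the circle case fails. Extending each composite $r_i\circ f$ separately, where $r_i$ retracts onto the $i$-th circle, gives maps that cannot be recombined, because at some points several of them may differ from the basepoint. This is why we argue directly.

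The first step is a uniform local path-connectedness modulus for $E$. By compactness there is a function $\eta:(0,\infty)\to(0,\infty)$, with $\eta(\varepsilon)\to 0$ as $\varepsilon\to 0$, such that any two points at distance $<\eta(\varepsilon)$ are joined by a path of diameter $<\varepsilon$. Path-connectedness provides a path between any two points at all. The second step is a Whitney/Dugundji-type cover of $X\setminus A$. Using paracompactness of metric spaces and $\dim X\le 1$, we take a locally finite open cover $\mathcal U$ of $X\setminus A$ of order at most $2$ such that each $U\in\mathcal U$ satisfies $\operatorname{diam} U\le \tfrac12 d(U,A)$. Order at most $2$ means its nerve $N$ is a graph. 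We will need to check that covering dimension passes to the open subspace $X\setminus A$, which holds for metric spaces, and then take an order-$2$ refinement of the Whitney cover. For each $U$ we pick $a_U\in A$ with $d(a_U,U)\le 2d(U,A)$.

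The third step defines the extension. We map each vertex $U$ of $N$ to $f(a_U)$. Each edge $\{U,V\}$ of $N$ is mapped by a path in $E$ from $f(a_U)$ to $f(a_V)$, chosen of diameter $<\varepsilon$ whenever $d(f(a_U),f(a_V))<\eta(\varepsilon)$, which is possible by the modulus from the first step. We compose with the canonical map $X\setminus A\to N$ given by a partition of unity subordinate to $\mathcal U$. Setting $F=f$ on $A$ then gives a map that is continuous on $X\setminus A$ and on the interior of $A$.

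The main obstacle is continuity of $F$ at points $a\in\partial A$. Fix $\varepsilon$ and use continuity of $f$ at $a$. If $x\to a$ with $x\in X\setminus A$, then every $U$ containing $x$ has $d(U,A)\le d(x,a)$, hence $d(a_U,a)\le 4d(x,a)$. So the vertices $f(a_U)$ relevant to $x$ are $\eta(\varepsilon)$-close to $f(a)$ once $x$ is close enough to $a$. The edge paths used at $x$ then have diameter $<\varepsilon$, so $F(x)$ lies within roughly $2\varepsilon$ of $f(a)$. The choice of edge paths must be made before this estimate, depending only on the distance between endpoints. This works because the modulus $\eta$ is uniform on $E$, which is exactly where compactness of $E$ is used.
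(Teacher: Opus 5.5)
Your argument is correct, but it follows a different route from the paper, which gives no proof of its own. For $\S^1$ the paper quotes Engelking's characterization of $\dim\le 1$ by extension of maps into $\S^1$, which is valid for all normal $X$. For $\Earr$ it quotes Burillo's Theorem 19, which combines that characterization with Tietze's theorem. For $(\S^1)^{\vee k}$ it only asserts that Burillo's argument adapts.

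You instead prove directly the $\dim\le 1$ case of the Kuratowski--Dugundji theorem for compact, path-connected, locally path-connected targets. An order-$2$ Whitney cover of $X\setminus A$ has a graph as nerve, so only edges need to be filled by paths. The uniform local path-connectedness modulus of the compact space $E$ then gives the control at $\partial A$. This handles all three targets at once, indeed any Peano continuum, and actually supplies the wedge case that the paper leaves unproved. The cost is that you use the metric on $X$, which is harmless since asymptotic cones are metric, together with some dimension theory the paper avoids.

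When you write it out, make the following points explicit:
\begin{enumerate}
\item Monotonicity of $\dim$ for the open subspace $X\setminus A$: use the subspace theorem for metrizable spaces, or the fact that open sets are $F_\sigma$ together with the countable sum theorem.
\item Existence of a \emph{locally finite} open refinement of order $\le 2$: first take a locally finite refinement, then an order-$2$ open shrinking by Dowker's theorem. Refining an order-$2$ cover afterwards could raise its order.
\item A single choice of edge path per pair $\{U,V\}$, whose diameter is bounded by a fixed function of $d(f(a_U),f(a_V))$ tending to $0$. You get this by taking $\eta$ nondecreasing.
\item Continuity of the composite on $X\setminus A$, even though the nerve need not be locally finite: each point has a neighbourhood meeting only finitely many members of the cover, so it maps into a finite subcomplex of the nerve.
\end{enumerate}
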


The key consequence here is that this property entails injections in homotopy, as follows.
A classical source of injections in homotopy and homology is retracts. 
Recall that a subspace $A \subset X$ is a retract if the inclusion $i$ of $A$ in $X$ has a left 
inverse, meaning a map $r:X\to A$ such that $r\circ i=\operatorname{Id}_A$. 
A weakening of this notion, which we call \emph{retract up to homotopy}, is for the inclusion to 
have a left inverse up to homotopy, meaning this time that $r \circ i $ is homotopic to 
$\operatorname{Id}_A$; this still yields that $i$ induces injections in homotopy and homology.

The following is a slight restatement of a result of Burillo; we give its 
elementary proof for the sake of completeness.

\begin{Cor}[After {\cite[Cor. 20]{burillo1999dimension}}] \label{cor:embedded_covdim_one}
	Let $X$ be a space of covering dimension $1$. If $ i: A \xhookrightarrow{} X$ is a closed 
	subset 
	homotopically equivalent to the circle $\S^1$, to a finite wedge of circles $(\S^1)^{\vee k}$ 
	or to the Hawaiian earring $\Earr$, then $A$ is a retract up to homotopy.
	In particular, the induced map $i_*$ is injective in homotopy and in homology. 
\end{Cor}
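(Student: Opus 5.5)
The plan is to build the homotopy left inverse directly from the extension property of Theorem \ref{thm:extension_towards_Earr_dim1}, with a homotopy equivalence inserted in the middle. Write $E$ for the model space, so $E$ is $\S^1$, $(\S^1)^{\vee k}$ or $\Earr$. By hypothesis there is a homotopy equivalence $h : A \to E$ with a homotopy inverse $g : E \to A$, so that $g\circ h \simeq \operatorname{Id}_A$. The subset $A$ is closed in $X$ and $X$ is a metric space of covering dimension $1$, so Theorem \ref{thm:extension_towards_Earr_dim1} applies to $f = h : A \to E$. It gives a continuous extension $H : X \to E$ with $H\circ i = h$.

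Next set $r \defeq g \circ H : X \to A$. Then $r\circ i = g\circ H\circ i = g\circ h$, which is homotopic to $\operatorname{Id}_A$, so $A$ is a retract up to homotopy. For the second claim, apply the functors $\pi_1$ (at any basepoint $a\in A$) and $\Ho_1$, or any homotopy or homology functor. Since $(r\circ i)_* = (g\circ h)_*$, this composite is an isomorphism. For $\Ho_*$ this is immediate from homotopy invariance. For $\pi_1$ one has to account for the basepoint: a free homotopy from $g\circ h$ to $\operatorname{Id}_A$ identifies $(g\circ h)_*$ with the identity up to the change-of-basepoint isomorphism along the track of $a$, so $(g\circ h)_*$ is still an isomorphism. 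In either case $r_*\circ i_*$ is bijective, hence $i_*$ is injective.

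The only delicate points are hypothesis bookkeeping. First, "homotopically equivalent" must give an honest continuous map $A\to E$ defined on the closed set $A$; it does. Second, Theorem \ref{thm:extension_towards_Earr_dim1} as stated covers exactly the three targets listed, and it is applicable because metric spaces are normal. Third, the $\pi_1$ basepoint issue above needs a sentence. I expect the main obstacle, to the extent there is one, to be this basepoint subtlety: making sure that a free homotopy $g\circ h\simeq \operatorname{Id}_A$ still yields injectivity on $\pi_1(A,a)$. The standard fact that a map freely homotopic to a homotopy equivalence induces isomorphisms up to conjugation by the track path settles it.
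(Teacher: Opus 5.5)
Your proposal is correct and follows the paper's proof essentially verbatim: extend the homotopy equivalence $A\to E$ to $X$ using Theorem~\ref{thm:extension_towards_Earr_dim1}, compose with a homotopy inverse to get $r$ with $r\circ i\simeq \operatorname{Id}_A$, and deduce injectivity of $i_*$. Your added remark that a free homotopy $g\circ h\simeq\operatorname{Id}_A$ still makes $(g\circ h)_*$ an isomorphism on $\pi_1$ (up to the change-of-basepoint along the track) is a welcome precision that the paper leaves implicit.
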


\begin{proof}
	Let $i: A\hookrightarrow X$ be the inclusion of a closed subset. Let $f:A\to E$ be a 
	homotopy equivalence, where $E$ is either $\S^1$, $(\S^1)^{\vee k}$ or $\Earr$.
	
	By Theorem \ref{thm:extension_towards_Earr_dim1}, one can extend $f$ 
	to a continuous map $F:X\to E$ such that $F\circ i= f$. 
	Let $g:E \to A$ denote an homotopy inverse of $f$. Then $r=g\circ F : X \to A$ is such that 
	$r\circ i = g\circ F \circ i = g\circ f$ is homotopic to $\operatorname{Id}_A$, hence $A$ is a 
	retract up to homotopy.
	Now $r_*\circ i_*=\operatorname{Id}$ in homotopy and homology, so $i_*$ is injective.
\end{proof}

This result a key tool to find inclusions between fundamental groups. Indeed, it is enough
to \emph{topologically embed} $\Earr$ into $\cone$ 
in order to find inclusions $\pi_1(\Earr)< \pi_1(\cone)$ and $\Ho_1(\Earr) < \Ho_1(\cone)$.

\begin{Rq}
	For general $G$, the asymptotic cone $\cone[G]$ is not of covering dimension $1$, so Theorem 
	\ref{thm:extension_towards_Earr_dim1} does not directly 
	apply. What we do here instead, given a $\SOL$ quotient, is to construct loops inside 
	the $1$-dimensional asymptotic cone of $\SOL$ and prove that their lifts to $\cone[G]$ are 
	still loops.
	
	The issue is that injectivity of the base loops is lost when making their lifts be loops. 
	So we have to compute precisely where overlap occurs, first to obtain embeddings in $\cone$, 
	and second to show that the degeneration map remains injective in homology and homotopy.
\end{Rq}

\section{Setup of proof} \label{sec:setup}
In this section, we define the sequence of words that we wish to evaluate and use it to give a 
detailed statement of our main result, namely Theorem \ref{thm:technical_main}.

\subsection{The words $\Theta_k$} \label{subsec:def_words}
Let us start by defining our main tool, namely words to evaluate into groups.

We consider the free group on two generators $F=F_{U,Y}$, meaning that we study reduced words in 
the letters $U,Y$ and their inverses. We call these \emph{group words}, in contrast with 
\emph{positive words} where only positive powers of the letters appear. Given such a group word 
$w$, we often write $\overbar{w}$ in place of $w^{-1}$.

In this context, we denote the commutator of group words as

\[
\lb v,w\rb \defeq vw\overbar{v} \overbar{w}. 
\]

\begin{Def} \label{def:words_Theta}
We define a sequence $(\Theta_k)$ of group words in $F_{U,Y}$ by the induction formula
\begin{align*}
	&\Theta_0 = U; 
	&&\Theta_k = \lb\Theta_{k-1},Y\rb. 
\end{align*}
\end{Def}

The point is that the words $\Theta_k$, when evaluated on group elements, will allow us to define 
loops in $\SOL$ and their lifts in $G$ at the same time. We iterate commutators in order to 
eventually get back to the identity when evaluating the $\Theta_k$'s on elements of the exponential 
radical, which is nilpotent.

\subsection{Detailed statement of the main result} 
Let us now gather our ingredients. Recall that we fix a non-principal ultrafilter $\omega$, and denote by $\SOL$ the solvable Lie group $\R^2 \rtimes_{(1,-\alpha)} \R$, where $\alpha>0$. In the sequel, we fix a real triangulable Lie group $G$ with the SOL obstruction, meaning there is a continuous surjective homomorphism
\[
p: G \to \SOL.
\]
This induces a continuous surjection 
\[
p_{\omega} : \cone[G] \to \cone
\] 
between asymptotic cones.

Consider now the exponential radical $H$ of $G$. It is a nilpotent normal subgroup whose image in $\SOL$ contains its exponential radical $\R^2$ (see Section \ref{subsec:exp_rad} and Lemma \ref{lem:image_exp_rad}).
Hence if we denote by $x,y,t$ the group elements $(1,0,0), (0,1,0)$ and $(0,0,1)$ in $\SOL$, we can lift them to elements $\tilde{x},\tilde{y}$ in $H$ and $\tilde{t}$ in $G$.

Let $(\Theta_k)$ be the sequence of group words over $U,Y$ from Definition \ref{def:words_Theta}.
For $n,l\geq 1$, let $\theta_{k,n}^{(\ell)}$ be the loop in $\SOL$ defined as the evaluation (see 
Definition \ref{def:evaluation_mots_elements})
\[
\theta_{k,n}^{(\ell)}=\lev \theta_{k}(U=t^{n/\ell}xt^{-n/\ell}, Y=y ) \rev[\SOL],
\]
and let $\theta_{k}^{(\ell)}$ be the loop in $\cone$ obtained as the ultralimit of 
$(\theta_{k,n}^{(\ell)})_{n\geq 0}$. Crucially, those are indeed loops since $x$ and $y$ commute 
and $\sg$ normalizes $\sg[x]$.

With this setup in mind, we have the following result, which readily implies Theorem \ref{thm:main}.

\begin{Thm} \label{thm:technical_main} 
	Let $G$ be a real triangulable Lie group with the SOL obstruction, and let $p : G \to \SOL$ be 
	a surjective homomorphism. Let $H$, $x,y,t$ and $\widetilde{x}, \widetilde{y},\widetilde{t}$ be 
	as above, and let $k\geq 0$ be such that $H$ is $k$-step nilpotent.
	\smallskip
		
	The sequence of loops $ (\theta_{k}^{(\ell)})_{l\geq 1}$  
	in $\cone$ defines a continuous map $\Phi : \Earr \to \cone$;
	the paths $(\widetilde{\theta}_{k}^{(\ell)})_{l\geq 1}$ obtained by replacing $x,y,t$ with 
	their lifts are loops as well, and define similarly a continuous map $\widetilde{\Phi} : \Earr 
	\to \cone[G]$ 
	such that $p_{\omega} \circ \widetilde{\Phi} =\Phi$.

	Furthermore, $\Phi$ is $\pi_1$-injective, hence so is $\widetilde{\Phi}$.
\end{Thm}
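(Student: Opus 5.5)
The proof splits into three parts. First we show that the lifted paths close up and that everything is continuous. Then we identify the image of $\Phi$ in $\cone$ up to homotopy. Finally we transport $\pi_1$-injectivity from a combinatorial model to $\cone$ through Corollary \ref{cor:embedded_covdim_one}.

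\emph{Step 1: loops, continuity and compatibility.} Since $\widetilde{x},\widetilde{y}\in H$ and $H$ is normal, $\widetilde{u}=\widetilde{t}^{m}\widetilde{x}\widetilde{t}^{-m}$ lies in $H$. The element $\Theta_k(\widetilde{u},\widetilde{y})$ is a $(k+1)$-fold iterated commutator of elements of $H$, so it is trivial because $H$ is $k$-step nilpotent. Hence every $\widetilde{\theta}^{(\ell)}_{k,n}$ is a combinatorial loop. Its length is at most $|\Theta_k|\,(2n/\ell+1)\simeq n/\ell$, with constants depending only on $k$. By Lemma \ref{lem:paths_are_lipschitz} the ultralimits are Lipschitz loops $\widetilde{\theta}^{(\ell)}_k$ of length $O(1/\ell)$, all based at the identity class. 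Since their lengths tend to $0$, they assemble into a continuous map $\widetilde{\Phi}:\Earr\to\cone[G]$, the $\ell$-th circle being sent to $\widetilde{\theta}^{(\ell)}_k$. The map $\Phi$ is built the same way. Because $p$ is a homomorphism sending the lifts to $x,y,t$, we get $p\circ\widetilde{\theta}^{(\ell)}_{k,n}=\theta^{(\ell)}_{k,n}$ pointwise, hence $p_\omega\circ\widetilde{\Phi}=\Phi$. The last claim is then formal: if $\Phi=p_\omega\circ\widetilde{\Phi}$ is $\pi_1$-injective, so is $\widetilde{\Phi}$.

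\emph{Step 2: geometry in $\cone$.} We first treat a single $\ell$, where the cone point is that $t^{n/\ell}xt^{-n/\ell}$ and $y$ span a quasi-flat-free configuration. Using Proposition \ref{prop:dist_SOL} and the two totally geodesic hyperbolic foliations, we show that the pieces of $\theta_{k,n}$ are quasi-geodesic segments (vertical segments along $\sg$ together with short $x$ and $y$ moves). Two distinct pieces stay far apart unless they are literally the same segment traversed again, with uniform constants. Coincident segments arise because $\Theta_k$ reuses letters, since $x$ and $y$ commute. This gives a finite metric graph $\Cocoa$ (the ``cocoa pod'') and a factorization $\theta^{(\ell)}_k=\iota_\ell\circ q_\ell$. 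Here $\iota_\ell$ is a bi-Lipschitz embedding of $\Cocoa$ into $\cone$, and $q_\ell:\S^1\to\Cocoa$ is the overlap map. Combinatorially, expanding $\Theta_k$ in $F_{U,Y}$ and using the relations, $q_\ell$ sends the generator to a \emph{positive} word in a suitable free basis of $\pi_1(\Cocoa)$, hence to a nontrivial element. Next, across scales, the loops at scales $1/\ell$ share only initial and final vertical segments of $\sg$, again by the distance estimates. The union is therefore a closed ``cocoa pod of square earrings'' $K\subset\cone$, homotopy equivalent to $\Earr$, and $\Phi=\iota\circ q$ with $q:\Earr\to K$.

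\emph{Step 3: injectivity.} Since $\cone$ has covering dimension $1$ by Theorem \ref{thm:dim_cone_SOL} and $K$ is closed and homotopy equivalent to $\Earr$, Corollary \ref{cor:embedded_covdim_one} makes $\iota_*$ injective. It remains to show that $q_*$ is injective. We use Theorem \ref{thm:description_pi_1_Earr}, under which $q_*$ is compatible with the projections onto finite subwedges. On each finite stage it is the map $F_n\to\pi_1$ of a finite graph sending $c^{(j)}$ to positive words $P_j$ in pairwise disjoint alphabets. Such a map is injective: a reduced word in the $c^{(j)}$ maps to a concatenation of positive blocks and their inverses, and no cancellation can occur between blocks from different alphabets or between a positive block and a positive block. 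Passing to the inverse limit gives injectivity on $\pi_1(\Earr)$. The main obstacle is Step 2: certifying precisely which segments overlap in the cone, via lower bounds on distances between pieces of iterated commutator paths in $\SOL$, so that the image really is the graph $K$ and the positivity claim holds.
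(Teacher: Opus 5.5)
Your overall architecture matches the paper's. The lifted loops close up because $\Theta_k$ evaluated on $\widetilde{u},\widetilde{y}\in H$ lies in $\mathrm{C}^{k+1}H=1$. Then $\Phi$ factors as a bi-Lipschitz embedding of an earring of cocoa pods, injective on $\pi_1$ by Corollary~\ref{cor:embedded_covdim_one} and Theorem~\ref{thm:dim_cone_SOL}, composed with an overlap map. At each finite stage of $\varprojlim F_n$, that overlap map sends generators to positive words in pairwise disjoint alphabets. Steps 1 and 3 are fine.

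The gap is Step 2, which you flag yourself: its two conclusions are asserted, and they are where the real work lies. First, positivity does not come out of ``expanding $\Theta_k$ and using the relations''. Indeed $\Theta_k=\Theta_{k-1}Y\Theta_{k-1}^{-1}\overline{Y}$ contains $\Theta_{k-1}^{-1}$, and positivity only appears after a specific rewriting:
\begin{itemize}
\item Set $A_i=Y^i\Theta_1^{(-1)^i}Y^{-i}$. Then $YA_i^{-1}\overline{Y}=A_{i+1}$, so $Y\Theta_{k-1}^{-1}\overline{Y}$ is the reversed, index-shifted word, and $\Theta_k=w_k(A_0,\dots,A_{k-1})$ with $w_k$ positive.
\item Next, $A_i=(B_i\overline{B_{i+1}})^{(-1)^i}$, and you must take the basis of $\pi_1(\Cocoa)$ with alternating orientations $c_i=(b_i\overline{b_0})^{(-1)^i}$. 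In this basis $a_0=c_1$, $a_i=c_ic_{i+1}$ for $i>0$ even, and $a_i=c_{i+1}c_i$ for $i$ odd.
\item The orientation choice matters. In the naive basis $d_i=b_i\overline{b_0}$ one gets $a_0=d_1^{-1}$ and $a_1=d_2d_1^{-1}$, so nothing is positive.
\item All cancellations in these rewritings must occur between powers of $Y$, with length bounded independently of $n$, so that they vanish in $\cone$ by Corollary~\ref{cor:filiform_vanishing}.
\end{itemize}

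Second, the overlap pattern you describe is not derived and is partly wrong. At a fixed scale, the relevant pieces are the $k+1$ arcs $b_j$, $0\le j\le k$. They are not ``far apart'': they all share the endpoints $\1_\omega$ and $e_\omega=[t^nxt^{-n}]$. What must be proved is that they meet only there. This amounts to showing each $b_{i,n}\overline{b_{i',n}}$ is a quasi-isometrically embedded circle with constants independent of $n$, which is the $\theta_{1,n}$ computation with $y$ replaced by $y^{i'-i}$.

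Across scales, the shared pieces are not only on $\sg$. For every $j\le k$, the arcs $b_j^{(\ell)}$, $\ell\ge 1$, share initial segments of the ray $y^j\sg$, and this is what produces the square-earring pattern. The descending halves are not shared, since they end at the points $[t^{n/\ell}xt^{-n/\ell}]$, which are pairwise distinct in $\cone$.

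Showing that nothing else overlaps is the case analysis of Proposition~\ref{prop:arcs_bj_change_of_scale_embedding_Earr}. There the additive constants depend on $\lambda=\ell/\ell'$ and are unbounded as $\lambda\to 1$. So you must take the ultralimit in $n$ for each fixed pair of scales, keeping only the multiplicative constants uniform.

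Until these two ingredients are supplied, the factorization of $\Phi$ through an earring of cocoa pods, with a positive overlap map, is the right picture but not yet a proof.
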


The map $\Phi$ will also induce an inclusion in homology, not of the whole of $\Ho_1(\Earr)$ but 
still of a subgroup isomorphic to $\Z^\N$, yielding Theorem \ref{thm:main_homology}.

The remainder of this paper is entirely devoted to the proof of Theorem \ref{thm:technical_main}, 
up to the end of Section \ref{sec:change_scale} which deals with homology to obtain Theorem
\ref{thm:main_homology}.
We start by studying 
the combinatorial properties of the words $\Theta_k$ in Section \ref{sec:combinatorial}. We then 
study the geometric properties of their evaluations into $\SOL$ in Sections \ref{sec:geom_loops} to 
\ref{sec:change_scale}. 

\section{The combinatorial study} \label{sec:combinatorial}

\subsection{The words $\Theta_k$ in the free group} 

This section is mostly independent of the preliminaries of Section \ref{sec:preliminaries}, and 
studies the combinatorics of the words $\Theta_k$ in the free group $F_{U,Y}$ given by the 
recursive formula $ \Theta_0 = U$ and $\Theta_k = \lb\Theta_{k-1},Y\rb$.
 
\begin{Ex}
	The first words $\Theta_{k}$ are 
	\begin{align*}
		&\Theta_0 = U ; \\
		&\Theta_{1} =  UY\overbar{U}\overbar{Y}; \\
		&\Theta_{2} =  \Theta_1 Y  \overbar{\Theta_1} \overbar{Y} = UY\overbar{U} 
		YU\overbar{Y}\overbar{U} \overbar{Y}; \\
		& \Theta_{3} = \Theta_2 Y  \overbar{\Theta_2} \overbar{Y} 
					 = UY\overbar{U} YU\overbar{Y}\overbar{U}YUY\overbar{U}\overbar{Y}U\overbar{Y}\overbar{U} \overbar{Y}.
	\end{align*}
\end{Ex}

\begin{Lem} \label{lem:combinatorial_length}
	For every $k\geq 1$, the (reduced) group word $\Theta_{k}$ has length $2^{k+1}$, and contains 
	$2^k$ occurrences of $U^{\pm 1}$ and $2^k$ of $Y^{\pm 1}$. 
	
	Moreover, it contains no occurrences of $U^{\pm2}$.
\end{Lem}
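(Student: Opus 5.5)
We argue by induction on $k$, and strengthen the induction hypothesis so that it also controls the first and last letters of $\Theta_k$. The extra claim is that for every $k\geq 1$ the reduced word $\Theta_k$ begins with $U$ and ends with $\overbar{Y}$. Consequently $\overbar{\Theta_k}$ begins with $Y$ and ends with $\overbar{U}$. This already holds for $\Theta_1=UY\overbar{U}\overbar{Y}$, which is reduced of length $4=2^2$, has two occurrences of $U^{\pm1}$ and two of $Y^{\pm1}$, and contains no $U^{\pm2}$.

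For the inductive step, assume the statement for $\Theta_{k-1}$ with $k\geq 2$, and write
\[
\Theta_k=\Theta_{k-1}\,Y\,\overbar{\Theta_{k-1}}\,\overbar{Y}.
\]
We show that this concatenation is already reduced by checking the three junctions.
\begin{enumerate}
\item Between $\Theta_{k-1}$ and $Y$: $\Theta_{k-1}$ ends with $\overbar{Y}$, so here we meet $\overbar{Y}Y$. This is a cancellation, which breaks the naive argument.
\end{enumerate}
The hypothesis must therefore be refined. From the examples, $\Theta_2=UY\overbar{U}YU\overbar{Y}\overbar{U}\overbar{Y}$ is reduced, and it was obtained from $\Theta_1Y\overbar{\Theta_1}\overbar{Y}$ only after cancelling $\overbar{Y}Y$. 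The cleaner route is to track the reduced form directly:
\begin{enumerate}
\item for $k\geq 1$, the reduced word $\Theta_k$ has the form $\Theta_k=A_k\overbar{Y}$, where $A_k$ is reduced, begins with $U$, and ends with $\overbar{U}$;
\item from this, $\overbar{\Theta_k}=Y\overbar{A_k}$, where $\overbar{A_k}$ begins with $U$ and ends with $\overbar{U}$.
\end{enumerate}
For $k=1$ we have $A_1=UY\overbar{U}$. Expanding the recursion gives
\[
\Theta_{k}=A_{k-1}\overbar{Y}\,Y\,Y\overbar{A_{k-1}}\,\overbar{Y}=A_{k-1}\,Y\,\overbar{A_{k-1}}\,\overbar{Y}.
\]
This is reduced at every junction: $A_{k-1}$ ends with $\overbar{U}$ and is followed by $Y$, while $Y$ is followed by $\overbar{A_{k-1}}$, which begins with $U$. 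Hence $A_k=A_{k-1}Y\overbar{A_{k-1}}$. This word begins with $U$ (the first letter of $A_{k-1}$) and ends with $\overbar{U}$ (the last letter of $\overbar{A_{k-1}}$), which closes the induction.

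The counting now follows from $|A_k|=2|A_{k-1}|+1$ with $|A_1|=3$, giving $|A_k|=2^{k+1}-1$ and therefore $|\Theta_k|=2^{k+1}$. The letters $U^{\pm1}$ come only from the two copies of $A_{k-1}$, so their number doubles at each step, from $2$ for $k=1$ to $2^k$. The remaining $2^{k+1}-2^k=2^k$ letters are $Y^{\pm1}$.

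It remains to show that $\Theta_k$ contains no $U^{\pm2}$, which we do by induction. Inside each copy of $A_{k-1}$ this holds by hypothesis. At the junctions, the copies of $A_{k-1}$ and $\overbar{A_{k-1}}$ are separated by a letter $Y$, and the final $\overbar{Y}$ is not a $U$-letter, so no two consecutive $U^{\pm1}$ can appear across a junction.

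The main obstacle is identifying the right shape of the reduced word. Since $\Theta_{k-1}$ ends with $\overbar{Y}$, exactly one cancellation occurs at each step, and the core $A_k$ has to be tracked to see that there is no further cascade of cancellations.
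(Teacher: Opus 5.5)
Your proof is correct and follows essentially the same route as the paper: your core $A_k$ (reduced, starting with $U$, ending with $\overbar{U}$, with $\Theta_k=A_k\overbar{Y}$ and $A_k=A_{k-1}Y\overbar{A_{k-1}}$) is exactly the paper's $UN_k\overbar{U}$ with $N_{k}=N_{k-1}\overbar{U}YU\overbar{N_{k-1}}$, and the length count, the letter counts and the absence of $U^{\pm2}$ are derived the same way. In a final write-up, drop the abandoned first attempt and rename $A_k$, which clashes with the symbols $A_i$ of Definition~\ref{def:words_A}.
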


\begin{proof}
	We prove by induction that, for every $k\ge 1$, the word $\Theta_k$ can be written as
	\begin{align} \label{eq:words_Theta}
		\Theta_{k} = U N_k \overbar{U} \overbar{Y}
	\end{align}
	where $N_k \in F_{U,Y}$ contains no occurences of $U^{\pm2}$ and is such that there is no cancellation in $UN_k\overbar{U}$.
	
	This holds for $k=1$ with $N_1=Y$.  Assuming it holds for $k\geq 1$, we get by the recursive 
	definition of $\Theta_{k+1}$ (see Definition \ref{def:words_Theta}) that
	\[
		\Theta_{k+1} = \Theta_k Y \overbar{\Theta_k} \overbar{Y}
		= (U N_k \overbar{U} \underbrace{\overbar{Y}) Y(Y}_{=Y} U \overbar{N_k} \overbar{U}) \overbar{Y} = U N_{k+1} \overbar{U} \overbar{Y},
	\]
	where the sequence of words $(N_k)$ is defined by the inductive formula
	\begin{align} \label{eq:words_N}
		&N_1=Y, &&N_{k+1}= N_k \overbar{U} Y U \overbar{N_k}.
	\end{align}
	By the induction assumption $UN_k\overbar{U}$ is reduced, so the above expression of $N_{k+1}$ 
	is as well, and so is $UN_{k+1}\overbar{U}$.
	
	Now, we see by Equation \ref{eq:words_N} that for $k\geq2$, the word $N_k$ 
	starts with $Y$ and ends with $\overbar{Y}$. Hence by Equation \ref{eq:words_N} again, for all 
	$k\geq 1$ the word $N_k$ contains no occurences of $U^{\pm2}$, and by Equation 
	\ref{eq:words_Theta} 
	neither does $\Theta_k$.
	
	From the inductive formula for $N_k$ it follows that $|N_{k+1}|=2|N_k|+3$, hence by Equation 
	\ref{eq:words_Theta} we have $|\Theta_{k+1}|=|N_{k+1}|+3=2|N_k|+6=2|\Theta_k|$. Since 
	$|\Theta_1|=4$, 
	we get indeed that $|\Theta_k|=2^{k+1}$.
	
	Furthermore, the only cancellation happening at each step is one instance of $\overbar{Y}Y$. So 
	if we denote by $u_k$ (resp. $y_k$) the number of occurrences of $U^{\pm 1}$ (resp. $Y^{\pm1}$) 
	in $\Theta_{k}$, we get the induction formulas
	$
		u_k=2u_{k-1} ; ~ y_k =2y_{k-1} + 2 -2= 2y_{k-1}.
	$
	Since $u_1=y_2=2$, we have indeed that $u_k=y_k=2^{k+1}$.
\end{proof}

We need the following refinement of Lemma \ref{lem:combinatorial_length} in order to calculate the length of the evaluation of our words.

\begin{Lem} \label{lem:comb_length_bis}
	Consider the free group $F_{x,y,t}$ on three generators. Then for every $k\geq 1$ and every 
	$n\geq 0$, the reduced word $\Theta_k(U=t^nxt^{-n},Y=y)$ has length $2^{k+1}(n+1)$. 
\end{Lem}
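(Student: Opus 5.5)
We will use the structure established in the proof of Lemma \ref{lem:combinatorial_length}, namely the decomposition $\Theta_k = U N_k \overbar{U}\overbar{Y}$ as a reduced word, in which no $U^{\pm 2}$ occurs. The plan is to substitute $U = t^n x t^{-n}$ and $\overbar{U} = t^n x^{-1} t^{-n}$ letter by letter, and to show that the resulting word in $F_{x,y,t}$ is already reduced. Its length is then the sum of the lengths of the substituted blocks. By Lemma \ref{lem:combinatorial_length} there are $2^k$ occurrences of $U^{\pm1}$, each contributing $2n+1$ letters, and $2^k$ occurrences of $Y^{\pm1}$, each contributing $1$ letter. The total is $2^k(2n+1) + 2^k = 2^{k+1}(n+1)$, as claimed.

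The substitution is a homomorphism $F_{U,Y}\to F_{x,y,t}$, so the image of a reduced word is the concatenation of the images of its letters. Cancellation can therefore only happen at the junction between consecutive letters. We check the four kinds of adjacent pairs in a reduced word with no $U^{\pm2}$.
\begin{itemize}
\item $UY^{\pm1}$ or $\overbar{U}Y^{\pm1}$: the junction reads $t^{-n}y^{\pm1}$, which is reduced when $n\geq1$. When $n=0$ it reads $x^{\pm1}y^{\pm1}$, which is also reduced.
\item $Y^{\pm1}U^{\pm1}$: the junction reads $y^{\pm1}t^n$, which is reduced; for $n=0$ it is $y^{\pm1}x^{\pm1}$, again reduced.
\item $Y^{\epsilon}Y^{\epsilon}$: since the word is reduced, two consecutive $Y$-letters have the same sign, and their images $y^{\epsilon}y^{\epsilon}$ do not cancel.
\item $U^{\pm1}U^{\pm1}$: a same-sign pair would be a $U^{\pm2}$, which Lemma \ref{lem:combinatorial_length} excludes, and an opposite-sign pair $U\overbar{U}$ or $\overbar{U}U$ would contradict reducedness. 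So this case never occurs.
\end{itemize}
This last point is exactly why the absence of $U^{\pm2}$ matters: the junction $t^{-n}t^{n}$ would cancel $2n$ letters and merge the two $x$'s into a single block.

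Each substituted block $t^n x^{\pm1} t^{-n}$ is itself reduced. Hence the concatenation is freely reduced, and its length is the sum computed above.

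The only delicate point is the bookkeeping of occurrence counts. The proof of Lemma \ref{lem:combinatorial_length} ends with a slightly garbled count ($u_k=y_k=2^{k+1}$ versus $2^k$ in the statement), so we will recheck it directly. By the recursion, $u_1=2$ and $u_{k+1}=2u_k$, so $u_k=2^k$. Since $|\Theta_k| = 2^{k+1}$, it follows that $y_k = 2^k$ as well. With these counts the length formula $2^{k+1}(n+1)$ follows.
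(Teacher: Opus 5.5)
Your proof is correct and follows the same route as the paper: count the letters of the substituted word using the occurrence counts of $U^{\pm1}$ and $Y^{\pm1}$ from Lemma \ref{lem:combinatorial_length}, then observe that no free cancellation occurs, because $\Theta_k$ is reduced and contains no $U^{\pm 2}$, and a $U^{\pm2}$ is the only way the powers of $t$ could cancel. Your junction-by-junction check, including the case $n=0$, and your recomputation $u_k=y_k=2^k$ (which corrects the garbled count at the end of the proof of Lemma \ref{lem:combinatorial_length}) make the paper's brief argument fully explicit.
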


\begin{Rq}
	Here $\Theta_k(U=t^nxt^{-n},Y=y)$ denotes the evaluation, 
	not as path (hence no braces) 
	but as a word in $F_{x,y,t}$, 
	meaning the image of $\Theta_k \in F_{U,Y}$ under the homomorphism $F_{U,Y} \to 
	F_{x,y,t}$ that sends $U$ to $t^nx^{-n}$ and $Y$ to $y$.
\end{Rq}

\begin{proof}
	By Lemma \ref{lem:combinatorial_length}, $\Theta_k$ contains $2^k$ occurences of each of 
	$U^{\pm1}$ and $Y^{\pm1}$. Hence before any cancellation, the word $\Theta_k(U=t^nxt^{-n},Y=y)$ 
	has length equal to $2^{k}(2n+1)+2^{k}=2^{k+1}(n+1)$. 
	
	So the point is that there is no cancellation when evaluating $\Theta_k$ on $(t^nxt^{-n},y)$. 
	Since $\Theta_k$ is already reduced over $(U,Y)$, cancellations can only come from occurences 
	of $U^{\pm2}$ that lead to powers of $t$ cancelling. By the second part of Lemma 
	\ref{lem:combinatorial_length}, this does not happen in $\Theta_k$. 
\end{proof}

\subsection{The symbols $A_i$} \label{subsec:building_blocks}
In order to better understand these words as $k$ grows, we need to introduce more elaborate basic 
words. 

The first word $\Theta_1$ and its conjugates by $Y$ will act as an alphabet over which the 
$\Theta_k$ are positive words, so we introduce the following.

\begin{Def} \label{def:words_A}
	For $i \geq 0$, we denote by $A_i$ the element of $F_{U,Y}$ defined by 
	\[
	A_i \defeq Y^i \Theta_1^{(-1)^{i}} Y^{-i} = 
	\begin{cases}
		Y^{i} U Y \overbar{U} {\overbar{Y}}^{i+1}, & i \text{ even}; \\
		Y^{i+1} U\overbar{Y}\overbar{U}{\overbar{Y}}^{i}, &i \text{ odd}.
	\end{cases}
	\]
	We view $\A=\{ A_i : i \in \N\}$ as a new alphabet, with the $A_i$ as abstract symbols.
\end{Def}

Explicitly, we have
\begin{align*}
	&\Theta_1 = A_0;    \\
	&\Theta_2 = \Theta_1 Y \overbar{\Theta_1} \overbar{Y} = A_0 A_1   ; \\
	&\Theta_3 = \Theta_2 {(Y\Theta_2\overbar{Y})}^{-1} = A_0A_1(A_1^{-1}A_2^{-1})^{-1}= A_0 A_1 A_2 
	A_1; \\
	&\Theta_4 = \Theta_3 {(Y\Theta_3\overbar{Y})}^{-1} = A_0 A_1 A_2 A_1 A_2 A_3 A_2 A_1.
\end{align*}

This leads us to the expectation that each $\Theta_{k}$ can be rewritten as a \emph{positive} word 
over $\A$. 
Let us make this more precise. \smallskip

Let $\Sc=\{s_i :  i \in \N\}$ be an abstract alphabet. 
If $w$ is a positive word over $\Sc$, we denote by $w'$ the word obtained by writing $w$ in reverse order and shifting all indices by $1$ (meaning we replace each occurence of $s_i$ by $s_{i+1}$). For example, if $w=s_0s_7$ then $w'=s_8s_1$. 
	
\begin{Def} \label{def:words_w}
	We let $(w_k)_{k\ge 1}$ be the sequence of positive words over $\Sc$ defined by the induction formula
	\[
		w_1=s_0, \; w_{k+1}=w_k w_k'.
	\]
\end{Def}

\begin{Ex} \label{ex:words_w}
	The first few $w_k$'s are given by
	\[
		w_1=s_0, ~ w_2=s_0s_1, ~ w_3=s_0s_1s_2s_1, ~ w_4=s_0s_1s_2s_1 s_2s_3s_2s_1.
	\]
\end{Ex}

Notice from the definition that the set of symbols of $\Sc$ appearing in $w_k$ is equal to $\{s_0, 
\dots s_{k-1}\}$.
Hence we write $w_k(s_0,\dots s_{k-1})$. 

\begin{Prop} \label{prop:word_Theta_k}
	For every $k\geq 1$, the word $\Theta_k$ is equal to the evaluation of the positive word $w_k$ 
	on the alphabet $\A$, meaning
	$\Theta_k=w_k(A_0,\dots A_{k-1})$. 
\end{Prop}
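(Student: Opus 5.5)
We argue by induction on $k$, working with a stronger statement that also tracks how conjugation by $Y$ and inversion act on evaluated words. The key observation concerns the operation $w\mapsto w'$ on positive words over $\Sc$. Let $\varphi$ denote evaluation $s_i\mapsto A_i$ into $F_{U,Y}$, extended multiplicatively to positive words. We aim for the identity
\[
\varphi(w') = Y\,\varphi(w)^{-1}\,\overbar{Y}
\]
for every positive word $w$ over $\Sc$.

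We first prove the identity on single letters. From Definition \ref{def:words_A},
\[
Y A_i^{-1}\overbar{Y} = Y\bigl(Y^i\Theta_1^{(-1)^i}Y^{-i}\bigr)^{-1}\overbar{Y} = Y^{i+1}\Theta_1^{(-1)^{i+1}}Y^{-(i+1)} = A_{i+1}.
\]
So for $w=s_i$ we have $\varphi(s_i')=\varphi(s_{i+1})=Y\varphi(s_i)^{-1}\overbar{Y}$. For a general positive word $w=s_{i_1}\cdots s_{i_m}$, the reversed and shifted word is $w'=s_{i_m+1}\cdots s_{i_1+1}$. Hence
\[
\varphi(w')=\prod_{j=m}^{1} Y A_{i_j}^{-1}\overbar{Y} = Y\Bigl(\prod_{j=m}^{1} A_{i_j}^{-1}\Bigr)\overbar{Y} = Y\varphi(w)^{-1}\overbar{Y},
\]
because the inner factors $\overbar{Y}Y$ cancel and the reversed product of inverses is the inverse of the product. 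Thus the reversal in the definition of $w'$ exactly implements inversion, and the index shift exactly implements conjugation by $Y$.

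The induction itself is then immediate. For the base case, $\Theta_1=A_0=\varphi(s_0)=\varphi(w_1)$. Assuming $\Theta_k=\varphi(w_k)$, the recursive definition gives
\[
\Theta_{k+1}=\Theta_kY\overbar{\Theta_k}\overbar{Y}=\varphi(w_k)\cdot Y\varphi(w_k)^{-1}\overbar{Y}=\varphi(w_k)\varphi(w_k')=\varphi(w_kw_k').
\]
This equals $\varphi(w_{k+1})$ by Definition \ref{def:words_w}. The only symbols occurring in $w_k$ are $s_0,\dots,s_{k-1}$, which justifies writing $w_k(A_0,\dots,A_{k-1})$.

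The main point to get right is the sign bookkeeping in $A_i$: the exponent $(-1)^i$ must flip at each shift, and this is what makes the letter-level identity hold. Everything else is formal. The equality is an equality of elements of $F_{U,Y}$, not of unreduced concatenations, so no claim about reducedness is needed here.
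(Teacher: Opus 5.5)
Your proposal is correct and follows essentially the same route as the paper: induction on $k$, driven by the relation $YA_i^{-1}\overbar{Y}=A_{i+1}$, which identifies $Y\Theta_k^{-1}\overbar{Y}$ with the evaluation of $w_k'$ on $\A$. The only difference is that you verify the identity $\varphi(w')=Y\varphi(w)^{-1}\overbar{Y}$ explicitly letter by letter, whereas the paper states it in one line.
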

Again, here it is the classical evaluation of words as words, not as paths: we only deal with
combinatorial properties in this section.

\begin{proof} 
	Since $\Theta_1=A_0=w_1(A_0)$, the equality holds for $k=1$.
	We then proceed by induction. Let $k\geq 1$ be such that $\Theta_k=w_k(A_0,\dots A_{k-1})$. By 
	definition,
	\[
		\Theta_{k+1} = \Theta_{k} {Y\Theta_{k}^{-1}\overbar{Y}}.
	\]
	Recall that we consider the symbols $A_i=Y^i \Theta_1^{(-1)^{i}} Y^{-i}$, so that we have the 
	key relation
	\[
		YA_i^{-1}\overbar{Y} = A_{i+1}. 
	\]
	Hence obtaining $Y\Theta_{k}^{-1}\overbar{Y}$ from $\Theta_k$ consists in reversing the order 
	and shifting all indices by one, so that $Y\Theta_{k}^{-1}\overbar{Y}$ is equal to the word 
	$w_k'$ evaluated on $\A$.
	Therefore, we have that 
	\[
		\Theta_{k+1} =w_k(A_0,\dots A_{k-1}) w_k'(A_0,\dots A_{k-1}) = w_{k+1}(A_0,\dots A_{k}).
		\qedhere
	\]
\end{proof}

Some cancellation occurs inside the evaluation $w_k(A_i)$, but the following result ensures that it 
is of no significance for the asymptotic geometry of loops studied in Section \ref{sec:geom_loops}.

\begin{Lem}[Cancellation length in $w_k(A_i)$] \label{lem:cancellation_length_wk}
	For every $k\geq 1$, the total cancellation length in the evaluation of the word $w_k$ on the family $(A_0,\dots A_{k-1})$ in $F_{U,Y}$ is bounded by $k2^{k}$. Moreover, cancellations appear only between powers of $Y$.
\end{Lem}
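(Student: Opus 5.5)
We track what happens at each junction $A_iA_j$ of consecutive symbols in $w_k$. Then we sum the local cancellations over the $|w_k|-1$ junctions.

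\emph{Step 1: only junctions $A_iA_{i\pm1}$ occur.} From Definition \ref{def:words_w}, $w_{k+1}=w_kw_k'$, and the map $w\mapsto w'$ reverses the word and shifts indices by $+1$. By induction on $k$ we show two things. First, consecutive letters of $w_k$ have indices differing by exactly $1$. Second, $w_k$ starts with $s_0$ and ends with $s_1$ for $k\geq 2$ (and with $s_0$ for $k=1$). Indeed, $w_k'$ starts with the shift of the last letter of $w_k$. So the new junction in $w_{k+1}$ is $s_1s_2$ for $k\geq2$, and $s_0s_1$ for $k=1$. In both cases the indices differ by $1$.

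\emph{Step 2: local cancellation at a junction.} We use the explicit forms of Definition \ref{def:words_A}. For $i$ even, $A_i=Y^iUY\overbar{U}\,\overbar{Y}^{i+1}$. For $i$ odd, $A_i=Y^{i+1}U\overbar{Y}\overbar{U}\,\overbar{Y}^{i}$. Each $A_i$ begins with a nonnegative power of $Y$ followed by $U$, and ends with $\overbar{U}$ followed by a nonnegative power of $\overbar{Y}$. So at the junction $A_iA_j$ the reduced product is obtained by merging $\overbar{Y}^{a}Y^{b}$ into $Y^{b-a}$. The adjacent $U$-letters are separated from each other by this $Y$-block unless $a=b$. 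We compute the exponents case by case.

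For $i$ even and $j=i\pm1$ odd, we have $a=i+1$ and $b=j+1$. For $i$ odd and $j=i\pm1$ even, we have $a=i$ and $b=j$. In every case $a\neq b$, since $j\ne i$. Hence a nontrivial power of $Y$ survives between $\overbar{U}$ and $U$. The cancellation length at the junction is $2\min(a,b)\leq 2k$, because all indices are at most $k-1$.

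\emph{Step 3: no cascading.} We must check that cancellations at different junctions do not interact. Since a nonzero power of $Y$ survives at each junction, the $U^{\pm1}$-letters of each $A_i$ are never cancelled. So the cancellations are localised at the junctions and involve only $Y$'s, which gives the second assertion. Summing the local bounds over the $|w_k|-1=2^{k-1}-1$ junctions gives a total at most $2k\cdot 2^{k-1}=k2^k$.

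The main obstacle is Step 3 together with the parity bookkeeping in Step 2. The exponent $\min(a,b)$ needs care (it is at most $k$), and one must confirm that the surviving $Y$-block is nonempty, so that reduction cannot propagate into the $U$-letters. If needed, a cruder global bound also follows: the reduced length is $2^{k+1}$ by Lemma \ref{lem:combinatorial_length}, while the raw length is at most $\sum |A_{i}|\leq 2^{k-1}(2k+2)$. The difference is at most $k2^k$.
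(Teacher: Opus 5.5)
Your proposal is correct and follows essentially the same route as the paper. Consecutive indices in $w_k$ differ by one, and each junction $A_iA_{i\pm1}$ only merges $\overbar{Y}^{a}Y^{b}$ into $Y^{\pm1}$ at a cost of at most $2k$, over $2^{k-1}-1$ junctions. Your explicit induction for the consecutive-index property and your no-cascading check in Step 3 spell out points the paper leaves implicit, and your junction cost $2\min(a,b)$ is slightly more accurate than the paper's value ``$2i+2$'' in the case $A_iA_{i-1}$ with $i$ even.
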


Here, by \emph{total cancellation length} we mean the sum of cancellations lengths (in the sense of 
Definition \ref{def:cancellation_length}) of all consecutive pairs of symbols from $\A$ in the word 
$w_k(A_0,\dots, A_{k-1})$, when viewing these symbols as elements of  $F_{U,Y}$.

\begin{proof}
	From Definition \ref{def:words_w}, see that $w_k$ has length $2^{k-1}$ over $\Sc$ and is made 
	out of letters whose indices are consecutive, meaning that its factors of length $2$ are all of 
	the form $s_is_{i\pm1}$. 
	The words $A_i$ are reduced over $U,Y$ (see Definition \ref{def:words_A}),
	so the only cancellations occurring in the evaluation $w_k(A_i)$ are between concatenations of 
	some $A_i$ with $A_{i\pm1}$, with $i\leq k-1$. 
	
	If for instance $i$ is even, $A_i$ ends with $U^{-1}Y^{-(i+1)}$ and $A_{i\pm1}$ starts with $Y^{i+1\pm1}U$, so there is a cancellation $Y^{-(i+1)}Y^{i+1 \pm 1}=Y^{\pm 1}$ between powers of $Y$. In particular, the cancellation length of $A_i$ with $A_{i\pm1}$ is equal to $2i+2$.
	The case where $i$ is odd is similar, with a cancellation $Y^{-i}Y^{i\pm1}=Y^{\pm1}$ of length 
	$2i$ between powers of $Y$. Since $i\leq k-1$, this length is always bounded by $2k$.
	
	Now $w_k$ has length $2^{k-1}$ so it contains $2^{k-1}-1$ consecutive pairs of symbols.
	Hence the total cancellation length is bounded by $2k(2^{k-1}-1) \leq k2^k$. Moreover, 
	cancellations indeed only happen between powers of $Y$. 
\end{proof}

\subsection{The building blocks $B_j$} \label{subsec:arcs_B_j}

As we wish to study distances between loops induced by the $A_i$'s, we introduce one final set of 
symbols to help through computations. This allows to split each $A_i$ into two pieces, 
highlighting the parts where different $A_i$ overlap.

We define the words $B_j$ for $j\geq 0$ by
\[
	B_j = Y^j \Theta_0 {\overbar{Y}}^{j}.
\]

Note that contrary to the definition of $\A$, we do not put negative powers of $\Theta$ inside, and 
use $\Theta_0$ instead of $\Theta_1$. From the definition, we have the following.

\begin{Prop} \label{prop:decomposition_A_in_B}
	For every $i\geq 0$, we have
	\[ \\
		A_i = ({B_i\overbar{B_{i+1}}})^{(-1)^i} =
		\begin{cases}
			& B_i \overbar{B_{i+1}}, \text{ if } i \text{ is even}; \\
			& B_{i+1}\overbar{B_{i}},\text{ if } i \text{ is odd}.
		\end{cases}
	\]
	Moreover, the cancellation length of this product in $F_{U,Y}$ is equal to $2i$, and only 
	powers of $Y$ cancel out.
\end{Prop}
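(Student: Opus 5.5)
This is a direct computation in $F_{U,Y}$ from the definitions. We have $B_j = Y^j U \overbar{Y}^{j}$ and $\overbar{B_{j}} = Y^{j}\overbar{U}\,\overbar{Y}^{j}$. The plan is to expand the product $B_i\overbar{B_{i+1}}$, reduce it, and compare with the explicit form of $A_i$ given in Definition \ref{def:words_A}, treating the parities of $i$ separately.

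\emph{Even $i$.} The concatenation is
\[
B_i\overbar{B_{i+1}} = Y^i U \overbar{Y}^{i}\, Y^{i+1}\overbar{U}\,\overbar{Y}^{i+1}.
\]
The only place where free reduction can occur is the junction $\overbar{Y}^{i}Y^{i+1}$, since $U$ separates it from the other powers of $Y$. This junction reduces to $Y$, removing $2i$ letters. The result is $Y^iUY\overbar{U}\,\overbar{Y}^{i+1}$, which is exactly $A_i$ for $i$ even. So the cancellation length is $2i$, and only powers of $Y$ cancel.

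\emph{Odd $i$.} Here $A_i=(B_i\overbar{B_{i+1}})^{-1}=B_{i+1}\overbar{B_i}$, so we compute
\[
B_{i+1}\overbar{B_i} = Y^{i+1}U\overbar{Y}^{i+1}\, Y^{i}\overbar{U}\,\overbar{Y}^{i}.
\]
The junction $\overbar{Y}^{i+1}Y^{i}$ reduces to $\overbar{Y}$, again removing $2i$ letters. This gives $Y^{i+1}U\overbar{Y}\,\overbar{U}\,\overbar{Y}^{i}$, which matches the odd case of Definition \ref{def:words_A}.

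To justify that the claimed cancellation length is exact, we check that each resulting word is reduced: no $U$ is adjacent to $\overbar{U}$, and no power of $Y$ is adjacent to its inverse. This is immediate from the displayed forms. The cancellation length is then $|B_i|+|B_{i+1}|-|A_i| = (2i+1)+(2i+3)-(2i+4)=2i$, and the same count holds in the odd case.

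There is no real obstacle here. The only point needing care is to expand the inverse $\overbar{B_{i+1}}$ correctly, and to confirm the consistency $(B_i\overbar{B_{i+1}})^{-1}=B_{i+1}\overbar{B_i}$ with the sign convention $\Theta_1^{(-1)^i}$ in the definition of $A_i$.
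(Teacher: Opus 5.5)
Your proof is correct and is essentially the paper's argument: a direct computation in $F_{U,Y}$ in which the only free reduction is at the junction of $Y$-powers, ${\overbar{Y}}^{i}Y^{i+1}$ (resp.\ ${\overbar{Y}}^{i+1}Y^{i}$), which removes $2i$ letters. The only cosmetic difference is organizational: the paper checks $A_0=B_0\overbar{B_1}$ and $A_1=B_2\overbar{B_1}$ and then conjugates by powers of $Y$ (using $Y^{j}B_m{\overbar{Y}}^{j}=B_{m+j}$), whereas you expand each parity directly against the explicit form of $A_i$.
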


In particular, we see that $A_i$ and $A_{i-1}$ share  $\overbar{B_{i}}$ as a suffix whenever $i$ is odd, and share $B_{i}$ as a prefix whenever $i$ is even. 

\begin{proof}
	First, we have that $A_0=\Theta_1= \Theta_0 Y \overbar{\Theta_0}\overbar{Y} = B_0 
	\overbar{B_1}$. Hence $\overbar{\Theta_1}=B_1 \overbar{B_0}$, and conjugating by $Y$ we get 
	$A_1= Y \overbar{\Theta_1} \overbar{Y} =B_2 \overbar{B_1}$. 
	Since all $A_i$'s are conjugates by powers of $Y$ of either $A_1$ or $A_0$, we get the desired formulas by conjugating these equalities by $Y^i$. 
	
	For the second part see that, since $\Theta_0=U$, the only cancellation happening in the 
	concatenation of $B_i$ with $\overbar{B_{i+1}}$ is one instance of $Y^{-i}Y^{i+1}=Y$, which is 
	indeed of length $2i$. 
	\end{proof}

\section{The geometric loops} \label{sec:geom_loops}

In this section, we evaluate the words $\Theta_k$ given in Definition \ref{def:words_Theta} on 
suitable elements of the groups we are dealing with in order to produce combinatorial paths, and 
eventually loops. 

Recall that we fix a solvable Lie group $G$ with the SOL obstruction, meaning we have a surjective 
homomorphism
	$p: G \to \SOL$
for some $\alpha>0$. This induces a continuous surjection between asymptotic cones, 
denoted by $p_{\omega} : \cone[G] \to \cone$.
 Recall also that we denote by $x,y,t$ the standard basis elements in $\SOL$, namely $(1,0,0)$, 
 $(0,1,0)$ and $(0,0,1)$ respectively. Denoting by $H$ the exponential radical of $G$, we fix lifts 
 $\tilde{x},\tilde{y}$ in $H$ (given by Lemma \ref{lem:image_exp_rad}) and $\tilde{t}$ in $G$ of 
 $x,y,t$ through $p$.

From each word $\Theta_k$, we construct loops $\theta_{k,n}$ in $\SOL$  of length roughly $n$. Each 
sequence $(\theta_{k,n})_{n\geq 0}$ defines a loop $\theta_k$ in the asymptotic cone $\cone$. Now 
the point is that the lifts $\widetilde{\theta_{k,n}}$ to $G$ need not be loops for small $k$, so 
we have to fix $k$ large enough to get loops in $\cone[G]$.

\subsection{Definition of the loops $\theta_{k,n}$}

Recall from Definition \ref{def:evaluation_mots_générateurs} that given a word $r$ in the letters $x,y,t$ and their inverses, we denote by $\lev r\rev[\SOL]$ its evaluation as a path based at $1$ in (the Cayley graph of) $\SOL$; 
and similarly $\lev r\rev$ for a word in $\tilde{x},\tilde{y},\tilde{t}$ evaluated in $G$. Recall also from Definition \ref{def:evaluation_mots_elements} that given a word $R$ in letters $U,Y$ and their inverses, we may evaluate it on $U=t^nxt^{-n}$ and $Y=y$ to get a word over $x,y,t$ that we then evaluate as a path $\lev R(U=t^nxt^{-n},Y=y)\rev[\SOL]$.

\begin{Not} \label{not:evaluation_in_t,x,y}
	In what follows, our notational convention is to use capitals letters for group words over $\{U,Y\}$, say $R$, and lowercase with a subscript $n$ for their evaluation on $U=t^nxt^{-n},Y=y$ as paths in $\SOL$, say $r_n$. 
	This is what we use for $\Theta_k$ and $\theta_{k,n}$ here, as well as $A_i$, $B_j$ and 
	$a_{i,n}$, $b_{j,n}$.
\end{Not}

\begin{Def} \label{def:paths_theta}
	Let $k\geq 0$ and $n\geq 1$ be integers. We define the path $\theta_{k,n}$ to be the evaluation 
	of the word $\theta_k$ on $U=t^nxt^{-n}$ and $Y=y$ in $\SOL$. 
	 Equivalently, we have 
	\[
		\theta_{0,n} = \lev t^nxt^{-n}\rev; \; \theta_k = \lev \lb \lb \lb t^nxt^{-n},y\rb, \dots 
		\rb,y\rb \rev,
	\]
	where $k$ commutators appear in $\theta_{k,n}$.
	
	Similarly, we define the path $\widetilde{\theta}_{k,n}$ to be the evaluation of the word 
	$\theta_k$ on $U=\tilde{t}^n\tilde{x}\tilde{t}^{-n}$ and $Y=\tilde{y}$ in $G$.
\end{Def}

Since $\tilde{x},\tilde{y}, \tilde{t}$ are lifts of $x,y,t$ through the surjection $p$, the path 
$\tilde{\theta}_{k,n}$ is a lift of $\theta_{k,n}$ through $p$.

We have the following restatement of Lemma \ref{lem:comb_length_bis}.

\begin{Lem} \label{lem:length_path_theta_k,n}
	For every $k,n\geq 1$, the length of the combinatorial path $\theta_{k,n}$ is 
	\[
		{\ell}_{k,n}= 2^{k+1}(n+1). 
	\]
	In particular, it is roughly equal to $n$, with constants independant of $n$.
\end{Lem}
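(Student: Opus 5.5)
The path $\theta_{k,n}$ is, by Definition \ref{def:evaluation_mots_elements}, the evaluation in $\SOL$ of the reduced word $\Theta_k(U=t^nxt^{-n},Y=y)$ of the free group $F_{x,y,t}$. A combinatorial path obtained by evaluating a word $s_1\cdots s_\ell$ on generators has exactly $\ell$ steps (Definition \ref{def:evaluation_mots_générateurs}). So the length of $\theta_{k,n}$ is the word length of this reduced word, and the plan is to read this length off from Lemma \ref{lem:comb_length_bis}.

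First, note one convention. The evaluation must be taken of the reduced word in $F_{x,y,t}$, which the paper identifies with the product in the free group, rather than of the formal concatenation before reduction. Here this makes no difference: the proof of Lemma \ref{lem:comb_length_bis} shows that no cancellation occurs when substituting $t^nxt^{-n}$ for $U$ and $y$ for $Y$. This rests on two facts from Lemma \ref{lem:combinatorial_length}: $\Theta_k$ is reduced, and it contains no $U^{\pm 2}$. Hence the concatenated and reduced words coincide.

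Next, count letters. There are $2^k$ occurrences of $U^{\pm1}$, each contributing $2n+1$ letters, and $2^k$ occurrences of $Y^{\pm1}$, each contributing one letter. The total is
\[
2^k(2n+1)+2^k=2^{k+1}(n+1)=\ell_{k,n}.
\]

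Finally, for fixed $k$ we have $n\le \ell_{k,n}\le 2^{k+2}n$ for $n\ge1$. This gives $\ell_{k,n}\simeq n$ with constants depending only on $k$.

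There is no real obstacle here. The only point needing care is the convention that the path evaluates the reduced word, and this is settled by the no-cancellation statement in Lemma \ref{lem:comb_length_bis}.
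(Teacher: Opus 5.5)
Your proposal is correct and matches the paper, which presents this lemma as a direct restatement of Lemma \ref{lem:comb_length_bis}. That lemma rests on the same letter count $2^k(2n+1)+2^k=2^{k+1}(n+1)$ and the same no-cancellation argument ($\Theta_k$ is reduced and contains no $U^{\pm 2}$). The rough equality $\ell_{k,n}\simeq n$ then holds with constants depending only on $k$, as you say.
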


\begin{Rq}
	The two indices $k$ and $n$ play very different roles. On the one hand, the integer $k$ denotes 
	the order of commutators and will be fixed for a given group $G$: it is enough to take $k$ such 
	that the exponential radical is $k$-step nilpotent. On the other hand, $n$ parametrizes the 
	length of the paths and will tend to infinity in order to give rise to loops in the asymptotic 
	cone.
\end{Rq}

\begin{Rq} \label{rq:lacet_theta_1}
	We have
	\begin{align*}
		\theta_{1,n}=\lev t^nxt^{-n} y t^{n}x^{-1}t^{-n} y^{-1}\rev[\SOL]=\lev\lb 
		t^nxt^{-n},y\rb\rev[\SOL],
	\end{align*}
	with $t^nxt^{-n}=(e^n,0,0)$ and $y=(0,1,0)$ commuting in $\SOL$, so that the path 
	$\theta_{1,n}$ \emph{ is a loop in $\SOL$}. Explicitly, this combinatorial loop of length 
	${\ell}_{1,n}=4n+4$ is given by the sequence of points 
	\begin{equation}
		\begin{aligned} \label{eq:theta_{1,n}}
			\theta_{1,n} = (& 1,t, \dots t^n, \\
			& t^nx, t^nxt^{-1}, \dots t^nxt^{-n}, \\
			&t^nxt^{-n}y, t^nxt^{-n}yt, \dots, t^nxt^{-n}yt^n=yt^nx,  \\
			& t^nxt^{-n}yt^nx^{-1}=yt^n, yt^{n-1}, \dots , y ,1),
		\end{aligned}
	\end{equation}
	where we denote abusively elements of $\SOL$ as words in $x,y,t$ -- for example, $t^nxt^{-n}y=yt^nxt^{-n}=(e^n,1,0)$ in $\SOL$.
	
	Since $\theta_{1,n}=\lev\Theta_{1}(t^nxt^{-n},y)\rev[\SOL] = \lev[t^nxt^{-n},y]\rev[\SOL]$ is a 
	loop, the 
	evaluation of the inverse word $\overbar{\Theta_1}$ is simply {the reverse loop 
	$\overline{\theta_{1,n}}$}. Notice that this does not hold for paths that are not closed loops: 
	in that case, the reverse path is not based at $1$, so it is \emph{a translate} of the 
	evaluation of the inverse word.
\end{Rq}

Now, inside $G$ the elements $\Tilde{x}$ and $\Tilde{y}$ might not commute, so $\Tilde\theta_{1,n}$ 
might fail to be a closed loop. For instance, if $G$ is the group 
$\operatorname{Osc}_{\alpha}=\mathrm{Heis}_3 \rtimes_{(1,-\alpha,1-\alpha)} \R$ from Example 
\ref{ex:groups_SOL_obstruction}, 
the endpoint of $\widetilde{\theta}_{1,n}$ has an exponential coordinate in the direction of the 
commutator of $\tilde{x}$ and $\tilde{y}$ in $\mathrm{Heis_3}$.
 
However, we have the following result, which is the motivation for the construction of the higher 
order $\Theta_k$'s.

\begin{Lem} \label{lem:theta_k_is_a_loop_in_G}
	Let ${z}_{k,n}$ be the endpoint of $\widetilde{\theta}_{k,n}$ in G. For every $k\geq 0$, the 
	element $z_{k,n}$ lies inside $\mathrm{C}^{k+1}(H)$, the $(k+1)$th term in the lower central 
	series of $H=\Rexp$.
	
	In particular, for every $n$ and for $k$ such that $H$ is $k$-step nilpotent, the path 
	$\tilde\theta_{k,n}$ is a closed loop in $G$.
\end{Lem}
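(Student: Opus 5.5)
Since the combinatorial path $\widetilde{\theta}_{k,n}$ is the evaluation of the word $\Theta_k$ on $U=\tilde{t}^n\tilde{x}\tilde{t}^{-n}$ and $Y=\tilde{y}$, its endpoint is simply the image in $G$ of that word. In other words,
\[
z_{k,n} = \Theta_k(\tilde{u}_n,\tilde{y}), \qquad \tilde{u}_n \defeq \tilde{t}^n\tilde{x}\tilde{t}^{-n}.
\]
So the statement is purely algebraic. By the recursive definition, $z_{0,n}=\tilde{u}_n$ and $z_{k,n}=\lb z_{k-1,n},\tilde{y}\rb$ for $k\geq 1$. The plan is an induction on $k$, using the convention $\mathrm{C}^1(H)=H$ and $\mathrm{C}^{j+1}(H)=\lb \mathrm{C}^j(H),H\rb$.

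For the base case $k=0$, I need $\tilde{u}_n\in H$. We have $\tilde{x}\in H$ by the choice of lifts, which is possible by Lemma \ref{lem:image_exp_rad}. Moreover $H=\Rexp$ is normal in $G$, so the conjugate $\tilde{t}^n\tilde{x}\tilde{t}^{-n}$ also lies in $H$. Note that $\tilde{t}$ itself need not lie in $H$, so this normality is the one point to be careful about. For the inductive step, assume $z_{k-1,n}\in \mathrm{C}^{k}(H)$. Since $\tilde{y}\in H$, the commutator $z_{k,n}=\lb z_{k-1,n},\tilde{y}\rb$ lies in $\lb \mathrm{C}^k(H),H\rb=\mathrm{C}^{k+1}(H)$. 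The convention $\lb u,v\rb=uvu^{-1}v^{-1}$ does not matter here, since $\lb \mathrm{C}^k(H),H\rb$ is generated by commutators in either convention and contains their inverses.

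For the second assertion, take $H$ to be $k$-step nilpotent, so that $\mathrm{C}^{k+1}(H)=\{1\}$. Then $z_{k,n}=1$, and the path $\widetilde{\theta}_{k,n}$, which starts at $1$, ends at $1$; it is therefore a closed loop. This holds for every $n$, because the argument never used any property of $n$. The same reasoning shows that the statement remains true for any $k'\geq k$.

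There is no real obstacle. The only points needing care are matching the indexing convention of the lower central series, so that $\mathrm{C}^{k+1}(H)$ is trivial for a $k$-step nilpotent group, and remembering that $\tilde{u}_n$ belongs to $H$ by normality of $H$ and not because $\tilde{t}$ lies in $H$.
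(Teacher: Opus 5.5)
Your proof is correct and follows the paper's argument. Normality of $H$ in $G$ gives $z_{0,n}\in H=\mathrm{C}^1(H)$; the recursion $z_{k,n}=[z_{k-1,n},\tilde{y}]$ with $\tilde{y}\in H$ then puts $z_{k,n}$ in $\mathrm{C}^{k+1}(H)$, and this subgroup is trivial once $H$ is $k$-step nilpotent.
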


\begin{proof}
	Since $\Tilde{x}$ lies in $H$ which is normal in $G$, we have that $z_{0,n} \in H = 
	\mathrm{C}^1H$. Now the induction formula for $\Theta_k$ yields $ z_{k+1,n}=\lb z_{k,n},y \rb$, 
	so that for all $k$, the element $z_{k,n}$ lies in $\mathrm{C}^{k+1}H$.
	
	Since $H$ is nilpotent, $\mathrm{C}^{k+1}H$ is trivial for every $k$ greater or equal than its 
	nilpotency class $k_0$, so that $z_{k,n}=1$ and $\tilde{\theta}_{k,n}$ is a closed loop in $G$ 
	for $k\geq k_0$.
\end{proof}

Recall from Remark \ref{rq:paths_as_maps_on_segments} that we write combinatorial loops of length $\ell$ abusively as maps on $\ell \S^1$, as it does not change the loops induced between asymptotic cones.

\begin{Cor} \label{cor:words_induce_loops_in_cones}
	Let $k\geq 0$ be such that the exponential radical of $G$ is $k$-step nilpotent.
	Then for every $n \geq 1$, the paths $\theta_{k,n}$ and $\tilde{\theta}_{k,n}$ induce 
	$1$-Lipschitz maps
	 $\theta_{k,n} : \ell_{k,n}\S^1 \to \SOL$ 
	and $\tilde{\theta}_{k,n} : \ell_{k,n}\S^1 \to G$
	respectively.
	
	Therefore the ultralimits of these sequences are continuous loops $\theta_k :\S^1 \to \cone$ 
	and $\tilde{\theta}_k : \S^1 \to \cone[G]$ respectively, which satisfy the relation $p_{\omega} 
	\circ \tilde{\theta}_k =\theta_k$.
\end{Cor}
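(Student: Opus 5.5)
We first check that both $\theta_{k,n}$ and $\tilde{\theta}_{k,n}$ are closed combinatorial loops. For $\theta_{k,n}$, the endpoint is the projection of $\Theta_k(t^nxt^{-n},y)$ in $\SOL$. The elements $t^nxt^{-n}=(e^n,0,0)$ and $y$ commute (Remark \ref{rq:lacet_theta_1}), so $\Theta_1$ already evaluates to $1$. By the induction $\Theta_k=\lb\Theta_{k-1},Y\rb$, every $\Theta_k$ with $k\geq 1$ then evaluates to $1$ as well. For $\tilde{\theta}_{k,n}$, Lemma \ref{lem:theta_k_is_a_loop_in_G} applies directly, since $k$ is at least the nilpotency class of $H$; hence the endpoint $z_{k,n}$ is trivial. (The case $k=0$ only occurs for trivial $H$, which is excluded since $p(H)$ contains $\R^2$, so we may assume $k\geq 1$.) Both paths have length $\ell_{k,n}=2^{k+1}(n+1)$ by Lemma \ref{lem:length_path_theta_k,n}, and $\tilde{\theta}_{k,n}$ has the same length because it is the evaluation of the same word over the lifted letters.

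Next, we view these combinatorial loops as maps on $\ell_{k,n}\S^1=\R/\ell_{k,n}\Z$, as in Remark \ref{rq:paths_as_maps_on_segments}. Consecutive integer points go to points at distance at most $1$, since each step multiplies by a generator. We fix generating sets containing $x,y,t$, and in $G$ containing $\tilde{x},\tilde{y},\tilde{t}$. Closedness makes the step from $\ell_{k,n}-1$ back to $0$ a genuine edge as well. Lemma \ref{lem:paths_are_lipschitz}, applied on the circle (the distance on $\Z/\ell\Z$ being the minimum of the two arc lengths, so closedness is exactly what we use), gives $1$-Lipschitz maps. The piecewise extension between integer points does not affect the limits.

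For the ultralimits, rescaling by $1/n$ gives $1/n$-Lipschitz maps $\frac1n\ell_{k,n}\S^1\to \frac1n\SOL$, uniformly in $n$. Moreover $\frac1n\ell_{k,n}\to 2^{k+1}$, so the rescaled circles converge, via the ultralimit, to the circle of length $2^{k+1}$, which we identify with $\S^1$ up to a bi-Lipschitz rescaling. All loops are based at $1$, so they lie in the precone. The remark following Proposition \ref{prop:maps_induced_between_cones} then yields Lipschitz, hence continuous, loops $\theta_k$ and $\tilde{\theta}_k$.

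Finally, $p$ is a homomorphism sending $\tilde{x},\tilde{y},\tilde{t}$ to $x,y,t$, so $p\circ\tilde{\theta}_{k,n}=\theta_{k,n}$ pointwise. Also, $p$ is Lipschitz for word metrics once $p(\tilde S)\subset S'$ for some compact $S'$, which we arrange by enlarging $S'$. Hence $p_\omega([\tilde{\theta}_{k,n}(s_n)])=[\theta_{k,n}(s_n)]$, which gives $p_\omega\circ\tilde{\theta}_k=\theta_k$. The only delicate point is keeping the identification of the limit circles consistent for both sequences. This is automatic, since both loops use the same parameter space $\ell_{k,n}\S^1$.
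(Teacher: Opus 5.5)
Your proof is correct and follows essentially the same route as the paper: closedness of both loops (you check the $\SOL$ case directly from the commutation of $t^nxt^{-n}$ with $y$ and correctly rule out $k=0$, whereas the paper just cites Lemma~\ref{lem:theta_k_is_a_loop_in_G}), then $1$-Lipschitz maps on $\ell_{k,n}\S^1$ via Lemma~\ref{lem:paths_are_lipschitz}, then passage of the uniformly Lipschitz rescaled maps to the ultralimit, and finally $p\circ\tilde{\theta}_{k,n}=\theta_{k,n}$ giving $p_\omega\circ\tilde{\theta}_k=\theta_k$. One small slip: after rescaling both the circle and $\SOL$ by $1/n$ the maps are $1$-Lipschitz rather than $1/n$-Lipschitz, but uniform Lipschitzness, which is all you use, still holds.
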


\begin{proof}
	By Lemma \ref{lem:theta_k_is_a_loop_in_G}, we have indeed that $\theta_{k,n}$ and 
	$\tilde{\theta}_{k,n}$ are loops in $\SOL$ and $G$ respectively.
	Hence we may view the loops $\theta_{k,n}$ and $\tilde{\theta}_{k,n}$ as maps from (the integer 
	points of) the circle of length $\ell_{k,n}$ to $\SOL$ and $G$ respectively.
	Now by Lemma \ref{lem:paths_are_lipschitz} these maps are $1$-Lipschitz.
	
	Consider the sequence of maps $(\theta_{k,n} : \frac{l_{k,n}}{n}\S^1 \to 
	\frac{1}{n}\SOL)_{n\geq 1}$ between rescaled metric spaces. This sequence is uniformly 
	Lipschitz, so it induces a continuous map $\theta_k$ between ultralimits.
	By Lemma \ref{lem:length_path_theta_k,n}, we have the estimate $l_{k,n} \simeq n$ with 
	constants independent of $n$. So the ultralimit of the sequence of metric spaces 
	$\frac{l_{k,n}}{n}\S^1$ is (bi-Lipschitz equivalent to) the unit circle $\S^1$.  Hence the 
	induced map is $\theta_k : \S^1 \to \cone$.
	
	The same holds with $\tilde{\theta}_{k,n}$, yielding a continuous map $\tilde{\theta}_k : \S^1 
	\to \cone[G]$. Moreover, for every $n\geq 1$ we have the equality $p\circ \tilde{\theta}_{k,n} 
	= \theta_{k,n}$. 
	So by taking ultralimits we get that $p_{\omega} \circ \tilde{\theta}_k =\theta_k$.
\end{proof}

\begin{Rq}
	The loops $\theta_{1,n}$ are the combinatorial version of the loops $\alpha_{e^{n}}$ defined in 
	\cite[Section 9]{burillo1999dimension} that give rise to nontrivial elements in 
	$\pi_1(\Cone(\SOLV))$.
	Moreover, the higher order loops $\theta_{k,n}$ are the ones considered in \cite[Section 
	12]{cornulier2017geometric}, where it is shown that they have exponential area (with respect to 
	$n$), which implies exponential growth of the Dehn function of $G$.
	
	These two results tend to indicate $\theta_k=\lim_{\omega}\theta_{k,n}$ as a candidate to be a 
	nontrivial loop in $\pi_1(\cone[G])$; this indeed follows from Section 
	\ref{sec:higher_order_loops} below.
\end{Rq}

\subsection{Image of the first sequence of loops}  \label{subsec:image_SOL}

In what follows, we denote by $d$ the metric 
induced on $\SOL$ by the left-invariant Riemannian metric given by
\[
	ds^2=dt^2 + e^{-2t}dx^2 + e^{2\alpha t}dy^2,
\]
and by $\length$ the associated length, meaning $|z|=d((0,0,0),z)$ for all $z \in \SOL$. 
To emphasize the difference with this length, we write $\length_{\R}$ for the usual absolute value on $\R$.

\begin{Conv}
	What follows involves many rough comparisons, in the sense of Definition \ref{def:rough_comparison}. From here on out, those will \emph{only involve constants independent of $n$} (but which might depend on the choice of metric). This is also the case for the quasi-isometry constants of the maps we deal with. 
\end{Conv}

\begin{Rq}[Computing the length of words] \label{rq:computing_length}
	A key ingredient in what follows is the estimate of length from Proposition 
	\ref{prop:dist_SOL}, which indeed yields constants independent of $n$. 
	We apply it to words in $t,x,y$ and their inverses viewed as elements of $\SOL$. In practice, we consider two words $w_1$ and $w_2$ and wish to compute the distance $d(w_1,w_2)$. By left-invariance of $d$, it is equal to the length $|w|$ where $w=w_1^{-1}w_2$. We then repeatedly make use of the following algebraic identities holding in $\SOL$:
	\begin{align*}
		&(a,b,c) =x^ay^bt^c, \text{ where we write } x^a \defeq (a,0,0) ; \\
		&t^nxt^{-n} = (e^n,0,0)=x^{e^n}; \\
		&t^{n}yt^{-n} = (0,e^{-\alpha n},0)=y^{e^{-\alpha n}}.
	\end{align*}
	Using the conjugation relations and commutation of $x$ and $y$, we can reduce the word $w$ to a 
	product of the form $x^{a}y^{b}t^{c}$. Then using the estimate of length from Proposition 
	\ref{prop:dist_SOL} we compute 
	\[
	|w|\simeq|c|_{\R}+\log(1+|a|_{\R}+|b|_{\R}) 
	\simeq |c|_{\R}+\log(1+|a|_{\R})+\log(1+|b|_{\R}).
	\]
\end{Rq}

We know that $\theta_{1,n}$ is a closed loop of length roughly $n$ in $\SOL$, but we can say more 
about its metric properties.

\begin{Prop} \label{prop:theta_1}
	For every $n$, the loop $\theta_{1,n}$ is a quasi-isometric embedding of the circle of length 
	${\ell}_{1,n}=4n+4$ in $\SOL$, with quasi-isometry constants independent of $n$. 
\end{Prop}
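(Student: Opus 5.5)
The loop $\theta_{1,n}$ is $1$-Lipschitz by Lemma \ref{lem:paths_are_lipschitz}, so the upper bound $d(\theta_{1,n}(a),\theta_{1,n}(b))\preceq d_{\ell_{1,n}\S^1}(a,b)$ is automatic. The work is the reverse inequality: for any two integer points $a,b$ of the circle of length $4n+4$ we need $d(\theta_{1,n}(a),\theta_{1,n}(b))\succeq d_{\ell_{1,n}\S^1}(a,b)$, with constants independent of $n$. I would split the loop, following Equation \eqref{eq:theta_{1,n}}, into its four ``vertical'' sides. These are $\sigma_1=\{t^s\}$, $\sigma_2=\{t^nxt^{-n}t^{n-s}\}=\{x^{e^n}t^{n-s}\}$, $\sigma_3=\{x^{e^n}yt^{s}\}$ and $\sigma_4=\{yt^{n-s}\}$, with $0\le s\le n$, plus the two unit jumps by $x$ and $x^{-1}$ at height $n$. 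Up to bounded error, $\theta_{1,n}$ is then the boundary of the ``square'' drawn in Figure \ref{fig:metric_view_SOL}: the sides $\sigma_1,\sigma_2$ lie in the hyperbolic plane $\sg[x]\rtimes\sg[t]$ and meet at height $n$, and the sides $\sigma_3,\sigma_4$ lie in a translate of it. The pairs $(\sigma_2,\sigma_3)$ and $(\sigma_4,\sigma_1)$ lie in translates of the $yt$-plane and meet at height $0$.

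The plan is a case analysis on which sides contain the two points, using Remark \ref{rq:computing_length}. For two points $x^{a}y^{b}t^{c}$ and $x^{a'}y^{b'}t^{c'}$, the distance is roughly $|c-c'|_{\R}+\log(1+e^{-c}|a-a'|_{\R})+\log(1+e^{\alpha c}|b-b'|_{\R})$, since conjugating by $t^{-c}$ rescales the coordinates. The cases are as follows.

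\textbf{Same side.} Each side is a left translate of a segment of $\sg[t]$, hence geodesic, and the distance is $|s-s'|$.

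\textbf{$\sigma_1$ and $\sigma_2$.} Take $t^{s}$ and $x^{e^n}t^{s'}$. The $x$-contribution is $\log(1+e^{n-s})\ge n-s$. Adding $|s-s'|$ gives a quantity $\succeq (n-s)+(n-s')$, which is the length of the path between them through the top of the loop. This is exactly the hyperbolic-plane estimate.

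\textbf{$\sigma_4$ and $\sigma_1$.} The $y$-coordinate differs by $1$ and the height is at most $n$. The contribution is $\log(1+e^{\alpha s})\succeq \alpha s$, which together with $|s-s'|$ gives $\succeq s+s'$, the path length through the bottom. The pair $(\sigma_2,\sigma_3)$ is handled in the same way.

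\textbf{$\sigma_3$ and $\sigma_4$.} This mirrors the $(\sigma_1,\sigma_2)$ case.

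\textbf{Opposite sides, $\sigma_1$ and $\sigma_3$.} Both coordinates differ, by $e^n$ and by $1$. The distance is $\succeq |s-s'|+(n-s)+\alpha s$. Since the circle distance is at most $\min(s+s',\,2n-s-s')+O(1)$, this bound suffices up to constants depending on $\alpha$. The pair $(\sigma_2,\sigma_4)$ is symmetric.

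The main obstacle is bookkeeping rather than any single estimate. One has to check that in every case the lower bound matches the \emph{shorter} arc of the circle, and that this holds uniformly in $n$, including points near the corners and the unit edges labelled by $x^{\pm1}$. In the opposite-sides cases one must use both the $x$-term and the $y$-term simultaneously, taking the minimum over the two arcs.

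I would first reduce every case to a single inequality of the form $\log(1+e^{m})\ge m$, applied with $m=n-c$ or $m=\alpha c$. I would then verify in each of the handful of cases that the sum of these terms with $|c-c'|$ dominates the arc length, with multiplicative constant $\min(1,\alpha)^{-1}$ and additive constant absorbing the corner edges. Combining this with the $1$-Lipschitz bound gives the quasi-isometric embedding.
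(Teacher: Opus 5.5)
Your proposal is correct and follows essentially the paper's proof. The paper also splits $\theta_{1,n}$ into four translates of $\{t^n\}$ plus bounded pieces, treats adjacent sides via the $\Pi$-shaped quasi-geodesic estimate in the hyperbolic planes (Lemma~\ref{lem:quasigeod_H^2}, whose proof is your direct computation), and treats opposite sides by computing $p^{-1}p'$ with Proposition~\ref{prop:dist_SOL}. One harmless slip: for $t^s\in\sigma_1$ and $x^{e^n}yt^{s'}\in\sigma_3$ the circle distance is $2n+2-|s-s'|$, not $\min(s+s',2n-s-s')+O(1)$, but your lower bound $(n-s)+\alpha s\geq\min(1,\alpha)\,n$ still dominates it, since no circle distance exceeds half the circumference $2n+2$.
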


Let us now fix $n$, and denote in this section $\theta=\theta_{1,n}$ and $\pi=\theta_{0,n}$. 
Crucially, note that the loop $\theta_{1,n}$ consists of several ``$\Pi$-shaped'' paths that are 
translates of $\pi=\lev t^nxt^{-n}\rev[\SOL]$ and $\pi'=\lev t^{-n}yt^{n}\rev[\SOL]$. We can 
therefore make use of the following classical result about quasi-geodesic segments in $\H^2$, which 
is the motivation for defining the loops we use here. Recall from section \ref{subsec:SOL_alpha} 
that the planes $\sg[x]\rtimes \sg[t]$ and $\sg[y]\rtimes \sg[t]$ are totally geodesic copies of 
the hyperbolic plane in $\SOL$. 

\begin{Lem} \label{lem:quasigeod_H^2}
	The path $\pi=\lev t^nxt^{-n}\rev[\SOL]$ (resp. $\pi'=\lev t^{-n}yt^{n}\rev[\SOL]$) is a quasigeodesic segment of length $2n+1$ inside the totally geodesic plane $\sg[x]\rtimes \sg[t] \cong \H^2$ (resp. $\sg[y]\rtimes \sg[t] \cong \H^2$ ) of $\SOL$, with quasi-isometry constants independent of $n$.
\end{Lem}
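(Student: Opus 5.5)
By symmetry it suffices to treat $\pi=\lev t^nxt^{-n}\rev[\SOL]$. The other path $\pi'$ is handled identically, replacing $t$ by $t^{-1}$ and rescaling the $t$-coordinate by $\alpha$ in the flipped hyperbolic plane $\sg[y]\rtimes\sg[t]$.

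\emph{The path lies in the plane and is $1$-Lipschitz.} All the letters $t^{\pm1},x$ lie in the subgroup $\sg[x]\rtimes\sg[t]$, so the whole path stays in that plane. Its length is $2n+1$ by counting letters. Since the plane is totally geodesic, distances measured in $\SOL$ and in $\H^2$ agree up to a constant, and it is enough to estimate them with Proposition \ref{prop:dist_SOL}, taking $y=0$. By Lemma \ref{lem:paths_are_lipschitz}, the path is $1$-Lipschitz, so only the lower bound $d(\pi(i),\pi(j))\succeq |i-j|$ remains, with constants independent of $n$.

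\emph{Lower bound, by cases.} Write the vertices explicitly. For $0\le i\le n$ we have $\pi(i)=t^i=(0,0,i)$. For $0\le j\le n$ we have $\pi(n+1+j)=t^nxt^{-j}=(e^n,0,n-j)$. Following Remark \ref{rq:computing_length}, compute $|\pi(a)^{-1}\pi(b)|$ in three cases.
\begin{enumerate}
\item \emph{Both points on the ascending segment.} We get $|t^{j-i}|\simeq|j-i|$.
\item \emph{Both points on the descending segment.} Again the difference is a pure power of $t$, giving $|j-i|$.
\item \emph{One point $t^i$ on the ascending segment and one point $t^nxt^{-j}$ on the descending segment.} The difference is $t^{-i}t^nxt^{-j}=(e^{n-i},0,n-i-j)$. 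Its length is $\simeq |n-i-j|+\log(1+e^{n-i})\simeq |n-i-j|+(n-i)$. The parameter distance is $(n+1+j)-i\le 2(n-i)+|n-i-j|+1$, since $j\le (n-i)+|n-i-j|$. So the length is $\succeq$ the parameter distance.
\end{enumerate}

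All constants come from Proposition \ref{prop:dist_SOL} and the elementary estimate $\log(1+e^{m})\simeq m$, hence they are independent of $n$. This gives the quasi-geodesic property.

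The only real care needed is in the mixed case, to check that the exponential $x$-coordinate exactly compensates for the back-and-forth in $t$. That inequality is elementary, so no step is a serious obstacle. As a sanity check, in the upper half-plane model the path is two vertical geodesic segments joined by a horizontal step of hyperbolic length $1$ at height $e^n$, which is the standard quasi-geodesic in $\H^2$.
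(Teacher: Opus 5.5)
Your proposal is correct and follows essentially the paper's proof: reduce $\pi'$ to $\pi$ via the isomorphism of the $yt$-plane with the $xt$-plane (flipping and rescaling $t$ by $\alpha$), note that the two vertical pieces are geodesic, and in the mixed case compute $|t^{k}xt^{-l}|=|(e^{k},0,k-l)|\simeq k+|k-l|\simeq k+l$ using Proposition~\ref{prop:dist_SOL}. Your inequality $j\le (n-i)+|n-i-j|$ is the paper's estimate $u+|u-v|\simeq u+v$ written out explicitly.
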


\begin{Rq}
	Intuitively, this result comes from the contracting action of $t$ on $x$. As the $t$ coordinate goes up, the $e^{-2t}$ factor in the metric (see the beginning of Section \ref{subsec:image_SOL})
	contracts the $dx$ term, so that the length of a unit in the $x$ direction decreases 
	exponentially; recall the metric view of $\SOL$ from Figure \ref{fig:metric_view_SOL} in 
	Section \ref{subsec:SOL_alpha}.
	To see this geometrically, view the path $\pi=\theta_{0,n}$ as joining the 
	points $1$ and $e^nx$ by going up along a geodesic until time n, moving sideways one unit, and 
	then going down along a parallel geodesic. This forms a $\Pi$-shaped path in the upper 
	half-space model of $\H^2$ which approximates a geodesic, 
	see Figure \ref{fig:theta_1_in_coordinates}.
\end{Rq}

\begin{proof}[Proof of Lemma \ref{lem:quasigeod_H^2}]
	Notice first that the $xt$ and $yt$ planes in $\SOL$ are quasi-isometric via the map $(y,t) \mapsto (x,-\alpha t)$ (and even isometric with the standard choice of metric). This sends $\pi'$ to the path $\lev t^{n/{\alpha}}  x t^{-n/\alpha} \rev[\SOL]$, so that applying the result on $\pi$ with $n/\alpha$ instead of $n$ yields the result for $\pi'$, up to multiplying the quasi-isometry constants by $\alpha$. Hence it is enough to prove the result for $\pi$.
	
	The path $\pi=\theta_{0,n}$ consists of two geodesic segments of the form $\lev t^n\rev[\SOL]$, 
	and one segment of bounded length which can be ignored in our rough computations. Hence we only 
	need to compare distances between points of the first and the second  geodesic segment. The 
	goal is to establish the following estimate:
	\[
		d(\pi(s),\pi(s')) \simeq |s-s'|_{\R}, \text{ for all } s \in [0,n], s' \in [n+1,2n+1].
	\]
	Let $k,l \in \{0,\dots n\}$ and consider the instants $s=n-k$, $s'=n+l+1$ so that $|s-s'|_{\R} =l+k+1 \simeq l+k$. 
	By left-invariance of the metric and the estimate of length of Proposition \ref{prop:dist_SOL}, 
	we have: 
	\begin{align*}
		d(\pi(s),\pi(s')))=d(t^{n-k}, t^nxt^{-l}) = |t^kxt^{-l}| &=|t^{k}xt^{-k}t^{k-l}|
		 \\ &= |(e^k,0,k-l)| \simeq k + |k-l|_{\R}.
	\end{align*}
	Since $u+v \simeq u+|u-v|$ for nonnegative $u$ and $v$, 
	this last quantity is roughly $l+k$. 
	So $d(\pi(s),\pi(s'))$ roughly equals $|s-s'|_{\R}$
	with constants independent of $n$. This shows that $\pi$ is a quasigeodesic segment in $\H^2$ and in $\SOL$.
\end{proof}
Having this grasp on the metric properties of $\pi=\theta_{0,n}$ inside $\SOL$, we 
may now tackle $\theta_{1,n}$. In the following proof, by ``bounded'' we always 
mean bounded by a constant depending only on the choice of metric but not on 
$n$.

\begin{figure}
	\centering
	{\includegraphics[width=0.35\linewidth]{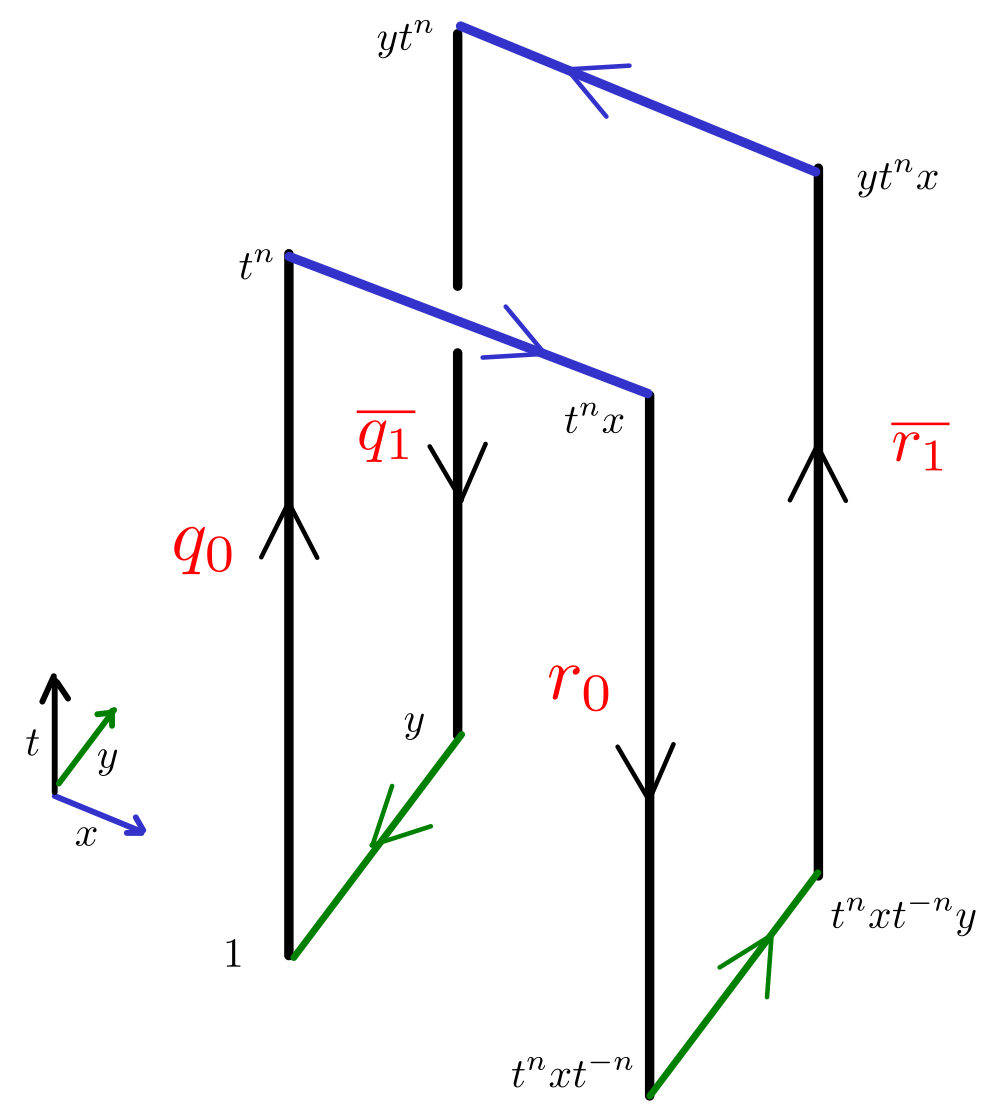}}
	\caption{The loop $\theta=\theta_{1,n}$ in coordinates $(x,y,t)$ in $\SOL$. Up to 
		parts of bounded length, it is equal to the concatenation 
		$q_0r_0\overline{q_1}\overline{r_1}$.} 
	\label{fig:theta_1_in_coordinates}
\end{figure}

\begin{proof}[Proof of Proposition \ref{prop:theta_1}]
	Recall that we fix $n\geq 1$, and that we denote in this proof
	\[
		\theta = \theta_{1,n}=\lev t^n x t^{-n} y t^n x^{-1} t^{-n}\rev[\SOL].
	\]
	This combinatorial loop in $\SOL$ consists of four parts of bounded length which we can ignore in our rough computations, and of four geodesic segments, namely translates of $\lev t^n \rev[]$. Let us denote by
	\[
		q_0 = \lev t^n \rev[], ~ r_0 =t^nx\lev t^{-n}\rev[], ~ q_1 = y\lev t^n \rev[], \text{ and } 
		r_1 = yt^nx\lev t^{-n} \rev[];
	\]
	so that the four geodesic segments of $\theta$ are (in this order) $q_0, r_0, \overline{q_1}$, 
	and $\overline{r_1}$.

	We thus need to consider two points $p,p'$ on the loop $\theta$ and compare their combinatorial 
	distance with their distance in $\SOL$. The aim is to obtain rough equality between these two 
	quantities.
	
	We claim that the case when $p$ and $p'$ lie on consecutive segments is a direct consequence of 
	Lemma \ref{lem:quasigeod_H^2}. Indeed, see in Figure \ref{fig:theta_1_in_coordinates} that both 
	concatenations $q_0r_0$ and $q_1r_1$ lie in a plane of constant $y$ coordinate, meaning a 
	totally geodesic copy of $\H^2$. Moreover, they are conjugates (by an element of bounded 
	length) of $\pi=\theta_{0,n}$, so by left-invariance of the metric they are quasi-geodesic 
	segments in their respective hyperbolic planes and hence in $\SOL$. 
	
	Now consider the concatenation $r_0\overline{r_1}$. It lies in a plane of constant $x$ 
	coordinate which is also a totally geodesic $\H^2$. Furthermore, and this last point is what 
	motivates the use of the specific path $\theta_{1,n}$ from the start, it is a left translate of 
	the path $\pi'=\lev t^{-n}yt^{n}\rev[]$. Hence by Lemma \ref{lem:quasigeod_H^2} again this 
	concatenation is also quasi-geodesic. The case of $q_0\overline{q_1}$ is similar.
	
	\begin{figure}
		\centering
		{\includegraphics[width=0.27\linewidth]{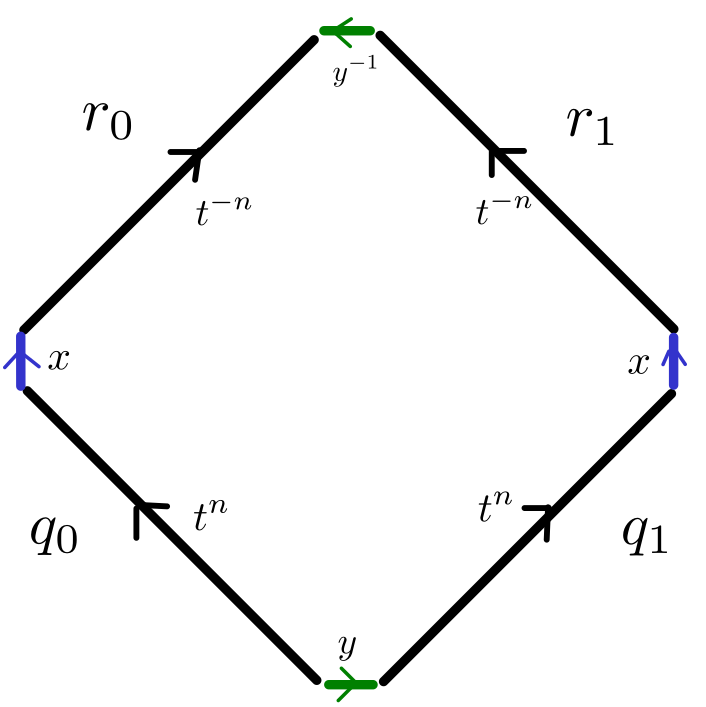}} 
		\caption{The four geodesic arcs composing $\theta=\theta_{1,n}$. Lemma 
		\ref{lem:quasigeod_H^2} implies that any concatenation of two consecutive arcs yields a 
		quasigeodesic, with uniform constants.}
	\end{figure}
	
	The only case left is when $p$ and $p'$ lie in opposite segments, say 
	\[
	p=t^k \in c_0, ~ 
	p'=yt^nxt^{-(n-l)} \in d_1
	\]
	with $0\leq k,l\leq n$. Then their combinatorial distance is roughly $n$. On the other hand, in $\SOL$ we have 
	\[
	p^{-1}p' =  t^{-k}yt^nxt^{-(n-l)}= (t^{-k}yt^k)(t^{n-k}xt^{-(n-k)}) t^{k+l} = 
	(e^{n-k},e^{\alpha k},k+l).
	\]
	By Proposition \ref{prop:dist_SOL}, the length of this element satisfies 
	\[
	|p^{-1}p'|_{\SOL} \simeq k+|n-k|+|k+l| \simeq n.
	\]
	The case $p \in c_1, p' \in d_0$ is similar.
	
	Having established the rough equality  between the combinatorial distance and the distance in 
	$\SOL$  in all cases completes the proof.
\end{proof}

\begin{Cor}
	The loop $\theta_1=\lim_{\omega}(\theta_{1,n})$ is a bi-Lipschitz embedding of the unit circle 
	into $\cone$.
\end{Cor}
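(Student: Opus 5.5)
The corollary should follow from Proposition \ref{prop:theta_1} and the remark after Proposition \ref{prop:maps_induced_between_cones}, which says that a sequence of maps that are quasi-isometric embeddings with constants uniform in $n$ induces a bi-Lipschitz embedding between rescaled ultralimits. I would first set up the source spaces. By Proposition \ref{prop:theta_1}, for each $n$ the map $\theta_{1,n} : \ell_{1,n}\S^1 \to \SOL$ is a quasi-isometric embedding with constants $C$ independent of $n$; here $\ell_{1,n}=4n+4$ and $\ell_{1,n}\S^1$ carries its intrinsic (arc-length) metric. Rescaling both sides by $1/n$, the inequalities
\[
\tfrac1C d(s,s') - C \le d(\theta_{1,n}(s),\theta_{1,n}(s')) \le C d(s,s') + C
\]
become
\[
\tfrac1C \tfrac{d(s,s')}{n} - \tfrac Cn \le \tfrac1n d(\theta_{1,n}(s),\theta_{1,n}(s')) \le C\tfrac{d(s,s')}{n} + \tfrac Cn.
\]
In the ultralimit the additive constants $C/n$ vanish.

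Next I would identify the ultralimit of the source. The sequence of rescaled circles $\frac{1}{n}(4n+4)\S^1$ has ultralimit the circle of length $4$, isometrically: the circles of length $\frac{4n+4}{n}\to 4$ converge in the Gromov–Hausdorff sense, and the ultralimit of compact spaces agrees with this limit. Combinatorial loops are defined only on integer points, but by Remark \ref{rq:paths_as_maps_on_segments} this does not change the ultralimit. Concretely, I would parametrize a point of the limit circle by $[s_n]$ with $s_n\in\{0,\dots,\ell_{1,n}-1\}$ and $s_n/n \to_\omega \sigma \in \R/4\Z$. Up to bi-Lipschitz rescaling, this circle is the unit circle.

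I would then pass the two inequalities above to the $\omega$-limit, for points $[s_n],[s'_n]$. This gives
\[
\tfrac1C d_{\S^1}(\sigma,\sigma') \le d_\omega(\theta_1(\sigma),\theta_1(\sigma')) \le C d_{\S^1}(\sigma,\sigma').
\]
Hence $\theta_1$ is well defined: if $s_n/n$ and $s'_n/n$ have the same limit, their images are identified in $\cone$. It is also bi-Lipschitz onto its image, in particular injective. Since the circle is compact, $\theta_1$ is a topological embedding with closed image.

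There is no real obstacle here; the only point requiring care is checking that the limiting metric on the domain is exactly the circle metric and not a degenerate quotient. This amounts to the intrinsic distance on $\ell_{1,n}\S^1$ rescaling correctly, $d(s_n,s'_n)/n \to_\omega d_{\S^1}(\sigma,\sigma')$, which holds because $\ell_{1,n}/n$ converges to a positive limit.
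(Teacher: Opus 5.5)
Your proof is correct and follows the paper's own argument: the paper also combines the uniform quasi-isometric embedding estimate of Proposition~\ref{prop:theta_1} with Proposition~\ref{prop:maps_induced_between_cones} and its remark on uniformly quasi-isometric sequences of maps, so that the additive constants disappear in the ultralimit. The only difference is cosmetic: the paper takes well-definedness and the Lipschitz bound from Corollary~\ref{cor:words_induce_loops_in_cones}, whereas you read them off directly from your upper inequality.
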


\begin{proof}
	Since the exponential radical of $\SOL$ is abelian, Corollary 
	\ref{cor:words_induce_loops_in_cones} implies that $\theta_1$ is a well-defined $1$-Lipschitz 
	map from $\S^1$ to $\cone$. Moreover, Proposition \ref{prop:theta_1} states that the sequence 
	$(\theta_{1,n})_{n\geq 1}$ is uniformly quasi-isometric, so that its ultralimit is indeed 
	bi-Lipschitz (see Proposition \ref{prop:maps_induced_between_cones}).
\end{proof}

We have shown that the image of $\theta_1$ in $\cone$ is topologically a circle. In fact, it 
appears as a quadrilateral. From this combinatorial point of view, we get back the geometric loop 
$A_1$ constructed by Burillo in \cite[Section 9]{burillo1999dimension}.

\begin{figure}
	\centering
	\includegraphics[width=0.5\linewidth]{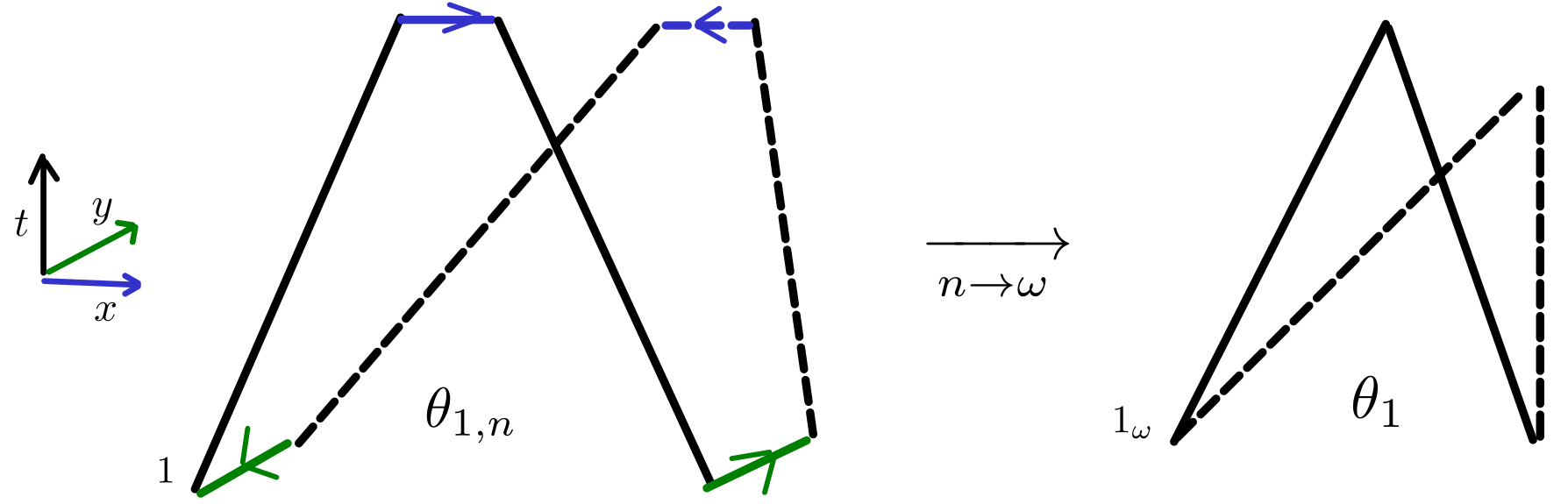}
	\caption{Metric appearance of $\theta_{1,n}$ and $\theta_1$. The bounded parts disappear in the 
	limit, leaving only $4$ segments of unit length in the cone.}
	\label{fig:lacettheta1solmetrique}
\end{figure}

\begin{Rq}
	This result of course does not hold if we replace the weight $-\alpha$ of $\SOL$ with 
	any positive value. The resulting group, as shown by Heintze in 
	\cite{heintze1974homogeneous}, admits a left-invariant metric of 
	negative curvature. In that case, the asymptotic cone is a real tree which 
	does not contain any nontrivial loop. 
	And indeed, we see that the loop $\theta_{1,n}=\lev t^nxt^{-n}yt^nx^{-1}t^{-n}y^{-1}\rev[]$ can 
	still be defined, but it is not quasi-isometric. The limit $\theta_1$ between cones simply 
	consists of two concatenated segments that are run back and forth, and is not an embedding.
\end{Rq}

Recall now that Theorem \ref{thm:dim_cone_SOL} states that $\cone$ has covering dimension $1$.
In particular, the embedded circle $\theta_1$ must be a nontrivial element of $\pi_1(\cone)$.
But, as the exponential radical of $G$ might fail to be abelian, the lifts $\Tilde\theta_{1,n}$ 
might not be loops in $G$, so this is not yet enough to find a nontrivial element in 
$\pi_1(\cone[G])$. 
This motivates the study of the loops $\theta_{k,n}$ with higher order commutators. 
The problem is that they are not embeddings anymore, hence the need for an extra factorization step.

\section{Higher order loops} \label{sec:higher_order_loops}

Recall from Definition \ref{def:paths_theta} that we define $\theta_{k,n}$ as the loop induced in 
$\SOL$ by the evaluation of the word $\Theta_{k}$ 
on $U=t^nxt^{-n}$ and $Y=y$; see Definition \ref{def:evaluation_mots_générateurs} for our 
convention of evaluations of words as paths.
We study here the geometric properties of the sequence $\theta_{k,n}$ when $k$ is fixed and $n$ 
goes to infinity.

Following our notational convention, (see Notation \ref{not:evaluation_in_t,x,y}), we denote by 
$a_{i,n}$ and $b_{j,n}$ for $i,j\geq 0$ and $n\ge 1$ the paths defined by the evaluation of the 
words $A_i$ and $B_j$ (defined in Sections \ref{subsec:building_blocks} and \ref{subsec:arcs_B_j}) 
on 
$U=t^nxt^{-n}$  and $Y=y$.

The main insight of Section \ref{sec:combinatorial} is that the words $\Theta_k$ are built out of 
the $A_i$'s, which in turn are each composed of two $B_j$'s; and this evaluates down into similar 
decompositions for the loops.
This is the key to simplify our computations, allowing us to consider the simple building blocks 
$b_{j,n}$ rather than the whole $\theta_{k,n}$.

\subsection{Embedding the circle into $\cone$}

Note that $a_{i,n}$ is a loop in $\SOL$ since $A_i$ is a conjugate of $\Theta_1^{\pm 1}$, while 
$b_{j,n}$ is simply a path.
Explicitly, we have 
\[
b_{j,n} =  \lev y^j t^nxt^{-n} y^{-j} \rev[\SOL].
\]
Since $x$ and $y$ commute, $b_{j,n}$ has fixed endpoints when $i$ varies, namely $\mathbf 1_{\SOL}$ 
and $t^nxt^{-n}=(e^n,0,0)$. 
In particular, the concatenation $b_{i,n}\overline{b_{j,n}}$ is a 
well-defined loop in $\SOL$, for which we have the following metric result that builds upon 
Proposition \ref{prop:theta_1}.

\begin{Prop} \label{prop:arcs_bj_embedding_circle}
	Let $i\neq i' \geq 0$ be integers. For every integer $n\geq 1$, the concatenation 
	$b_{i,n}\overline{b_{i',n}}$ is a quasi-isometric embedding of the circle of length roughly $n$ 
	into $\SOL$, with constants independent of $n$.
	
	Therefore, the ultralimit $b_i\overline{b_j}=\lim_{\omega}(b_{i,n}\overline{b_{i',n}},n\ge 0)$ 
	is a bi-Lipschitz embedding of the unit circle in $\cone$.
\end{Prop}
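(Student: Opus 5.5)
We mimic the proof of Proposition \ref{prop:theta_1}, the case $\{i,i'\}=\{0,1\}$. Write explicitly
\[
b_{j,n}=\lev y^j t^n x t^{-n} y^{-j}\rev[\SOL],
\]
so that, up to the subpaths $\lev y^{\pm j}\rev[]$ and the single $x$-step, all of bounded length (depending on $i,i'$ but not on $n$), the loop $b_{i,n}\overline{b_{i',n}}$ is the concatenation of four geodesic segments:
\[
q_i=y^i\lev t^n\rev[],\qquad r_i=y^it^nx\lev t^{-n}\rev[],\qquad \overline{r_{i'}},\qquad \overline{q_{i'}}.
\]
Since $y^i t^n x t^{-n} y^{-i}=(e^n,0,0)$ for all $i$, all these segments join the points $(0,i,\cdot)$ or $(e^n,i,\cdot)$, and the combinatorial length is $4n+O(1)$. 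The goal is to show that for any two points $p,p'$ on the loop, $d(p,p')$ is roughly their combinatorial distance along the circle, with constants depending only on $i,i'$ and $\alpha$.

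\emph{Consecutive segments.} The pair $q_i r_i$ is a left translate by $y^i$ of $\pi=\theta_{0,n}$, hence quasi-geodesic by Lemma \ref{lem:quasigeod_H^2}; the same holds for $q_{i'}r_{i'}$. The pair $r_i\overline{r_{i'}}$ lies in the plane $x=e^n$ of $\SOL$ and is a left translate of $\lev t^{-n}y^{i'-i}t^n\rev[]$ (up to bounded parts). By the $y$-analogue of Lemma \ref{lem:quasigeod_H^2}, with the sideways step $y^{i'-i}$ of bounded nonzero length $|i'-i|$ instead of $1$, this is still quasi-geodesic. The proof is the same computation: $|t^{-k}y^{c}t^{l}|\simeq k+|k-l|$ for a fixed constant $c\neq0$. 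The pair $\overline{q_{i'}}\,q_i$ is handled in the same way.

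\emph{Opposite segments.} Take, say, $p=y^it^k\in q_i$ and $p'=y^{i'}t^nxt^{-(n-l)}\in r_{i'}$. Then
\[
p^{-1}p'=(e^{n-k},\,(i'-i)e^{\alpha k},\,k+l).
\]
By Proposition \ref{prop:dist_SOL}, its length is roughly $k+(n-k)+\log(1+|i'-i|e^{\alpha k})\simeq n$, which is the order of the combinatorial distance. The symmetric case is identical.

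The main point to check is that $i\neq i'$ is used exactly here and in the consecutive-segment case. It guarantees that the $y$-coordinate $(i'-i)e^{\alpha k}$ is large, or equivalently that the sideways step is nonzero. If $i=i'$, the loop would backtrack, so this is where the hypothesis enters, and all constants must be uniform in $n$. Since the loop is a concatenation of finitely many pieces, the rough equality on each pair of pieces gives a quasi-isometric embedding of the circle of length roughly $4n$, with constants independent of $n$. The ultralimit statement then follows as in the corollary of Proposition \ref{prop:theta_1}: the loop is $1$-Lipschitz by Lemma \ref{lem:paths_are_lipschitz}, and uniform quasi-isometric embeddings induce bi-Lipschitz embeddings by Proposition \ref{prop:maps_induced_between_cones}.
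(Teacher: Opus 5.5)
Your proposal is correct and follows the paper's route. The paper conjugates by $y^i$, notes that $b_{i,n}\overline{b_{i',n}}$ becomes $\theta_{1,n}$ with $y$ replaced by $y^{i'-i}$, and reruns the computations of Proposition \ref{prop:theta_1}; you carry out exactly those computations explicitly, using $|i'-i|\ge 1$. One harmless slip: the $t$-coordinate of $p^{-1}p'$ is $l-k$, not $k+l$ (the paper's proof of Proposition \ref{prop:theta_1} has the same slip), but since $|l-k|\le n$ the estimate $|p^{-1}p'|\simeq n$ still holds.
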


\begin{Rq}
	Notice that, by Proposition \ref{prop:decomposition_A_in_B}, we have that 
	$\Theta_1=A_0=B_0\overline{B_1}$, hence evaluating on $U=t^nxt^{-n}$  and $Y=y$ yields the 
	equality of loops $\theta_{1,n}=b_{0,n}\overline{b_{1,n}}$. So Proposition \ref{prop:theta_1} 
	is precisely the case $i=0$ and $i'=1$.
\end{Rq}
The quasi-isometry constants occurring here are independent of $n$, but they do depend on $i$ and 
$i'$ as can be seen from the proof below. However, this is not consequential, since in Section 
\ref{subsec:image_theta_k} we fix $k\geq 1$ depending only on $G$ and consider only the $b_j$'s 
where $j\leq k$.

\begin{proof} 
	 For $n\geq 1$, we consider the following paths in $\SOL$:
	\begin{align*}
		b_{i,n} = \lev y^it^nxt^{-n}y^{-i} \rev[\SOL]; \text{ and }
		b_{i',n} = \lev y^{i'}t^nxt^{-n}y^{-i'} \rev[\SOL].
	\end{align*}
	They share their endpoints $\1_{\SOL}$ and $t^nxt^{-n}$ and each have length roughly $n$, so 
	that the concatenation $\delta =b_{i,n}\overline{b_{i',n}}$ is well-defined and yields a map 
	from the circle of lenth roughly $n$ to $\SOL$.
	
	Now, up to conjugation by $y^i$, the loop $\delta$ is equal to the loop $\theta_{1,n}$ where 
	$y$ is replaced by $y^{i'-i}$. Hence the same computations as in the proof of Proposition 
	\ref{prop:theta_1} apply, and we get the result.
\end{proof}

From the combinatorial decomposition of the words $A_i$ into $B_j$'s of Proposition 
\ref{prop:decomposition_A_in_B} and the fact that the cancellation length is bounded independently 
of $n$, this Proposition implies that for every $i \geq 0$, the ultralimit 
$a_i=\lim_{\omega}(a_{i,n},n\ge 0)$ is a bi-Lipschitz embedding of the unit circle $\S^1$ in 
$\cone$. But this alone is not enough to characterize the image of the loops $\theta_k$ for 
$k\geq2$.

\subsection{Image of the higher order loop $\theta_k$} \label{subsec:image_theta_k}
We now study the loop $\theta_k$ in $\cone$. By evaluating the rewriting of the word 
$\Theta_k$ given by Proposition \ref{prop:word_Theta_k}, we see that it is equal to the positive 
word $w_k$ (from Definition \ref{def:words_w}) over loops $a_i$.
Hence the need to consider several of the loops $a_i$ at the same time. Each one of them is an 
embedding, but there is overlap between them; we make use of the decomposition of the $A_i$'s into 
$B_j$'s from Proposition \ref{prop:decomposition_A_in_B} to highlight where this lack of 
injectivity occurs. 

First, we introduce the metric space that appears as the image of our loops.

\begin{Def}
	We denote by $\Cocoa$ and call the \emph{cocoa pod with $k+1$ arcs} the metric space defined as 
	the suspension of a finite set with $k+1$ elements.
	
	Namely, it is the quotient of the disjoint union of $k+1$ copies of the unit segment denoted by 
	$b_0,\dots ,b_k$, where all the starting points are identified on one side, and all the
	endpoints are identified on the other. It is endowed with the graph metric.
\end{Def}
The following lemma is a direct consequence of the construction of the cocoa pod.

\begin{Lem} \label{lem:pi_1_cocoa_pod}
	The space $\Cocoa$ is homotopically equivalent to a wedge of $k$ circles relatively to the 
	basepoint $0$, \emph{via} a map sending $b_0$ to the basepoint and the segment $b_i$ to the 
	$i$th circle for $1\leq i\leq k$.
	In particular, $\pi_1(\Cocoa)$ is the free group over generators $(b_1\overline{b_0},\dots , 
	b_k\overline{b_0})$.
\end{Lem}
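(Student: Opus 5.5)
The plan is to realise $\Cocoa$ as a finite graph and contract a spanning tree. Give $\Cocoa$ its natural CW structure. There are two $0$-cells, the common starting point $0$ of the arcs and the common endpoint, which I call $1$. There are $k+1$ open $1$-cells, the interiors of $b_0,\dots,b_k$. The arc $b_0$ is a closed subcomplex homeomorphic to $[0,1]$, hence contractible, and it contains both vertices.

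\textbf{Step 1 (collapse $b_0$).} The pair $(\Cocoa,b_0)$ is a CW pair with $b_0$ contractible. By the standard fact that collapsing a contractible subcomplex is a homotopy equivalence, the quotient map $q:\Cocoa\to\Cocoa/b_0$ is a homotopy equivalence. One can also see this directly: choose a homotopy $h_s:[0,1]\to[0,1]$ from the identity to the constant map at $0$, apply it on $b_0$, and extend it over the other arcs by the homotopy extension property. The resulting homotopy is stationary at the basepoint $0$, so the equivalence is relative to the basepoint.

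\textbf{Step 2 (identify the quotient).} In $\Cocoa/b_0$ both endpoints of each $b_i$, $1\le i\le k$, become identified with the single point $[b_0]$. Each $b_i$ therefore becomes a circle, and these $k$ circles meet only at that point. Since all spaces involved are compact Hausdorff, the continuous bijection from the wedge of these circles is a homeomorphism, so $\Cocoa/b_0\cong(\S^1)^{\vee k}$. Under this identification $q$ sends $b_0$ to the basepoint and $b_i$ onto the $i$-th circle, traversing it once.

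\textbf{Step 3 (the fundamental group).} The loop $b_i\overline{b_0}$ is based at $0$. It goes out along $b_i$ to the point $1$ and returns along $b_0$. Its image under $q$ is the $i$-th circle traversed once, followed by a constant path, so it is homotopic to the standard generator $c_i$ of $\pi_1((\S^1)^{\vee k})\cong F_k$. Since $q_*$ is an isomorphism, the classes of $b_1\overline{b_0},\dots,b_k\overline{b_0}$ freely generate $\pi_1(\Cocoa)$.

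The only step needing care is Step 1, specifically keeping the homotopy equivalence based. Once the CW structure is fixed this is routine, since the homotopy inverse constructed via homotopy extension fixes $0$.
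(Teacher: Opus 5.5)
Your proof is correct and follows the same route as the paper. The paper treats the lemma as a direct consequence of the construction, and in the proof of Proposition~\ref{prop:image_of_rescaled_loops} it states that the homotopy equivalence $\Cocoa\simeq(\S^1)^{\vee k}$ is obtained by quotienting the arc $b_0$ to the basepoint; your collapse of the contractible subcomplex $b_0$, with the based homotopy and the identification of the quotient, supplies the details the paper leaves implicit.
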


See Figure \ref{fig:cocoa_pod} for a picture of $\Cocoa[3]$, as well as the action on the labels of 
an homotopy equivalence towards a wedge of circles.

\begin{figure}
	\centering
	{\includegraphics[width=0.45\linewidth]{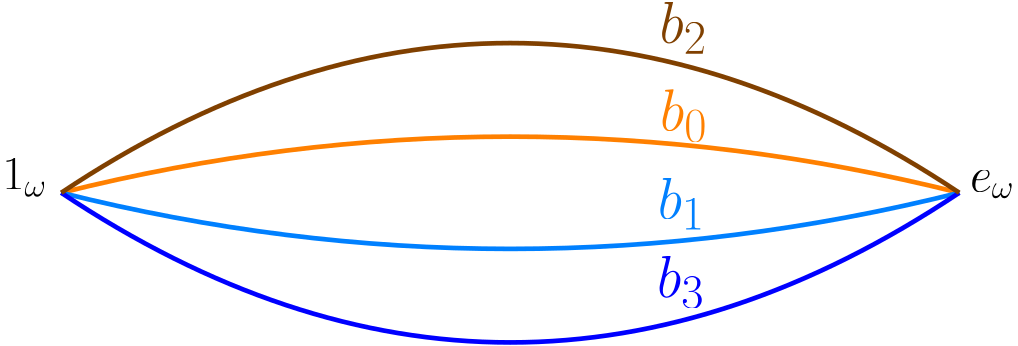}} 
	\quad
	{\includegraphics[width=0.23\linewidth]{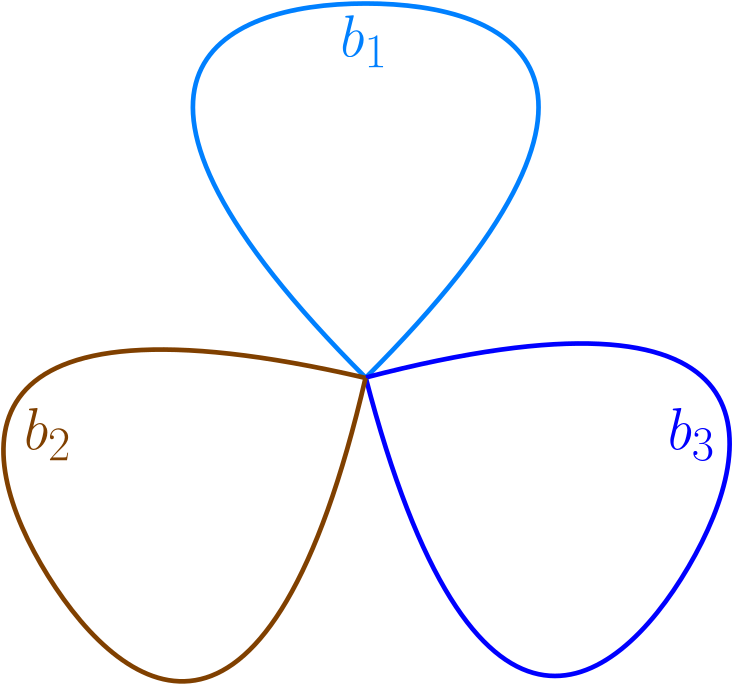}} 
	\captionof{figure}{On the left, a copy of $\Cocoa[3]$. This space is homotopically equivalent to a wedge of three circles, represented on the right with the action on the labels of an homotopy equivalence.}
	\label{fig:cocoa_pod}
\end{figure}

Because of the repetitions in the word $w_k$ and the overlap between the loops $a_i$, the loop 
$\theta_k$ is not an embedding of the circle as soon as $k\geq 2$. So we cannot use yet the 
$\pi_1$-injectivity for embedded finite wedge of circles in spaces of covering dimension $1$ as it 
is stated in Section \ref{subsec:cov_dim}.  To find back an embedding, we factor $\theta_k$ 
through a space homotopically equivalent to a wedge of $k$ circles, namely a cocoa pod.

\begin{Prop} \label{prop:image_of_theta_in_cone_is_a_wedge}
	For $k\geq 1$, the image of the loop $\theta_{k} : \S^1 \to \cone$ is a subspace of $\cone$ 
	homotopically equivalent to a wedge of $k$ circles.  
	
	More precisely, $\theta_k$ factors as a composition $\beta_k \circ W_k$ through the cocoa pod 
	$\Cocoa$, where 
	\begin{itemize}
		\item $\beta_k : \Cocoa \to \cone$ is a bi-Lipschitz embedding;
		\item $W_k : \S^1 \to \Cocoa $ is the continuous map represented by the word 
			\[
				w_k\left(A_i=(b_i\overline{b_{i+1}})^{(-1)^i}\right).
			\]
	\end{itemize}
\end{Prop}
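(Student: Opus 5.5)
The plan is to build $\beta_k$ directly from the arcs $b_j=\lim_\omega b_{j,n}$ for $0\le j\le k$, and then read off $W_k$ from the combinatorial identity $\Theta_k=w_k(A_0,\dots,A_{k-1})$ of Proposition \ref{prop:word_Theta_k} together with $A_i=(B_i\overbar{B_{i+1}})^{(-1)^i}$ from Proposition \ref{prop:decomposition_A_in_B}.

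\emph{Step 1: defining $\beta_k$.} For each $n$, every path $b_{j,n}=\lev y^jt^nxt^{-n}y^{-j}\rev[\SOL]$ goes from $\1$ to $(e^n,0,0)$ and has length roughly $n$. I will reparametrize each one linearly by $[0,1]$ at scale $n$ and define $\beta_{k,n}:\Cocoa\to\frac1n\SOL$ by sending the $j$-th arc of the cocoa pod to $b_{j,n}$. This is well defined, because all the starting points are glued to $\1$ and all the endpoints to $(e^n,0,0)$. We then set $\beta_k=\lim_\omega\beta_{k,n}$.

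Each $b_{j,n}$ is a quasi-geodesic segment, being a conjugate of $\pi$ by $y^j$ (Lemma \ref{lem:quasigeod_H^2}). For two distinct arcs $j\neq j'$, Proposition \ref{prop:arcs_bj_embedding_circle} says that $b_{j,n}\overline{b_{j',n}}$ is a quasi-isometric embedding of a circle, with constants independent of $n$. Only finitely many pairs occur, since $j,j'\le k$.

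Now take two points $p,p'$ of $\Cocoa$. If they lie on the same arc, or on two arcs $j$ and $j'$, their graph distance in $\Cocoa$ equals their distance along the circle $b_j\overline{b_{j'}}$. This holds because any path between arcs passes through one of the two poles, so the shortest route stays inside the union of the two arcs. Hence the rough equalities from Proposition \ref{prop:arcs_bj_embedding_circle}, taken with the maximum of the finitely many constants, give that $\beta_{k,n}$ is a quasi-isometric embedding with constants independent of $n$. By Proposition \ref{prop:maps_induced_between_cones}, $\beta_k$ is then a bi-Lipschitz embedding.

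\emph{Step 2: the factorization.} We define $W_k$ as the loop in $\Cocoa$ that traverses the sequence of arcs given by the positive word $w_k$ over the $A_i$, each $A_i$ read as $(b_i\overline{b_{i+1}})^{(-1)^i}$ and parametrized at uniform speed. At the level of $n$, the concatenation $p_n$ of the translated evaluations $b_{j,n}^{\pm1}$ differs from $\theta_{k,n}=\lev\Theta_k\rev$ only through cancellations.

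Note that, because $x$ and $y$ commute, the evaluation of each $B_j$ starts at $\1$ and ends at $x^{e^n}$. So each successive block in the concatenation is a left translate by an element of $\sg[x]$. Since the word is balanced in each $B_j$ up to returning, these translates are by $x^{0}$ or $x^{e^n}$ alternately, and the concatenation really lives on the $k+1$ arcs between the two poles.

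By Lemma \ref{lem:cancellation_length_wk} and Proposition \ref{prop:decomposition_A_in_B}, the total cancellation length is bounded by a constant depending only on $k$, not on $n$. Filiform vanishing (Corollary \ref{cor:filiform_vanishing}, applied finitely many times) then shows that $p_n$ and $\theta_{k,n}$ have the same ultralimit. The lengths differ by $O(1)$, so the parametrizations agree in the limit, and therefore $\theta_k=\beta_k\circ W_k$.

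\emph{Step 3: the homotopy type.} The image of $\theta_k$ is $\beta_k(W_k(\S^1))$. Every arc $b_j$ with $0\le j\le k$ appears in $w_k(A_0,\dots,A_{k-1})$, because $A_{k-1}$ involves $b_{k-1}$ and $b_k$. Hence $W_k$ is surjective, and the image of $\theta_k$ is homeomorphic to $\Cocoa$, which is homotopy equivalent to $(\S^1)^{\vee k}$ by Lemma \ref{lem:pi_1_cocoa_pod}.

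The main obstacle I expect is Step 1: deriving the global quasi-isometry from the pairwise statements, and especially making sure that the graph metric of $\Cocoa$ really matches the pairwise circle metrics. The second delicate point is the bookkeeping in Step 2, where the translates of the $b_{j,n}$ must land exactly on the prescribed arcs up to the bounded cancelled pieces of powers of $y$.
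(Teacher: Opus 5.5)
Your proposal is correct and follows essentially the paper's route: $\beta_k$ is assembled from the arcs $b_0,\dots,b_k$ using the pairwise Proposition~\ref{prop:arcs_bj_embedding_circle}, and $\theta_k=\beta_k\circ W_k$ follows from Propositions~\ref{prop:word_Theta_k} and~\ref{prop:decomposition_A_in_B} via filiform vanishing, since the cancellation length does not depend on $n$. You also make explicit several points the paper leaves implicit: the graph distance on $\Cocoa$ is realized inside two arcs, finitely many pairs give uniform constants, and $W_k$ is surjective. One phrase is loose: it is not that the word is ``balanced in each $B_j$''; the blocks alternate between $b_{j,n}$ from $\1$ and $\overline{b_{j',n}}$ from $t^nxt^{-n}$ because each $A_i=(B_i\overline{B_{i+1}})^{(-1)^i}$ is a positive $B$ followed by a negative one, so the $B$-letters in $w_k(A_i)$ alternate in sign.
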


\begin{proof}
	As in all the proofs that follow, we start by evaluating the combinatorial equalities from 
	Section \ref{sec:combinatorial} on $U=t^nxt^{-n}$ and $Y=y$ in $\SOL$ and then take the 
	ultralimit over $n$. The computations of cancellation length in Lemma 
	\ref{lem:cancellation_length_wk} and Proposition \ref{prop:decomposition_A_in_B}, and the fact 
	that these cancellations appear only between powers of $Y=y$, ensure that the total 
	cancellation length appearing is bounded independently of $n$. So the filiform vanishing of 
	Corollary \ref{cor:filiform_vanishing} applies and yields equalities between paths in 
	asymptotic cones.
	
	Applying this procedure to Propositions \ref{prop:word_Theta_k} and 
	\ref{prop:decomposition_A_in_B} yields the following equalities between loops in $\cone$:
	\begin{align}
		\theta_k &= w_k(a_0,\dots a_{k-1}); \label{eq:theta_k=w_k}\\ 
		a_i &= (b_i \overline{b_{i+1}})^{(-1)^i}.  \label{eq:a_i=b_i's}
	\end{align}
	In particular, see that the image of $\theta_k$ is equal to the union of the images of $b_0, 
	\dots b_k$.
	
	Recall that, for fixed $n\geq 1$, the paths $b_{j,n}$ share their endpoints $\1$ and $e_n=t^nxt^{-n}$ in $\SOL$ when $j$ varies. Hence denoting by 
	$
	\1_{\omega} = [\1]_{\omega}, ~ e_{\omega} =\lim_{\omega}(e_n)
	$
	the corresponding points in $\cone$, all paths $b_j$ are bi-Lipschitz embeddings of the segment 
	of unit length in $\cone$ with endpoints $\1_{\omega}$ and $e_{\omega}$. 
	
	Moreoever, when considering the $(k+1)$ first $b_j$'s together, Proposition 
	\ref{prop:arcs_bj_embedding_circle} implies that the 
	map $\beta_k : \Cocoa \to \cone$ defined by applying $b_i$ on the $i$-th segment, for $0\leq i \leq k$, is a bi-Lipschitz embedding of the cocoa pod with $k+1$ arcs.

	Our aim is now to factor $\theta_k$ through $\Cocoa$, decomposing it into first a map $W_k$ 
	that contains all the degeneration, and second the bi-Lipschitz embedding $\beta_k$ of 
	$\Cocoa$. This means that we construct the following commutative diagram:
	\[\begin{tikzcd}
		{\S^1} &&& \cone \\
		& {\Cocoa} \\
		\arrow["{\theta_k}", from=1-1, to=1-4]
		\arrow["{{W}_k}"', from=1-1, to=2-2]
		\arrow["{\beta_k}"', from=2-2, to=1-4]
	\end{tikzcd}\]
	Let $b_i$, $0\leq i \leq k$, denote the $k+1$ arcs of $\Cocoa$, so that we (for the sake 
	of simplicity) confuse the arcs with the paths that $\beta_k$ applies on each of them. As in 
	the statement of the theorem, let $W_k : \S^1 \to \Cocoa$ be the map represented by the 
	word
	\[
	w_k\left(A_i=(b_i\overline{b_{i+1}})^{(-1)^i}\right).
	\] 
	By this we mean that $W_k$ cuts the circle into $|w_k|$ equal arcs, labels them by the letters 
	of $w_k$, and sends each occurence of the letter $A_i$ to the loop 
	$(b_i\overline{b_{i+1}}^{(-1)^i})$ in $\Cocoa$, homeomorphically outside the endpoints of the 
	arc. Since all these loops are based at the initial point $\1_{\omega}$ of all segments $b_i$, 
	the map $W_k$ is continuous.
	
	By composing Equations \ref{eq:theta_k=w_k} and \ref{eq:a_i=b_i's}, we get the equality of loops
	\[
		\theta_k = w_k\left(A_i=(b_i\overline{b_{i+1}})^{(-1)^i}\right),
	\]
	which is equivalent to the equality of maps $\theta_k=\beta_k \circ W_k$.
\end{proof}

With this factorization step, we are now able to prove $\pi_1$-injectivity of the loop $\theta_k$, 
using both the dimension arguments of Section \ref{subsec:cov_dim} and the combinatorial properties 
from Section \ref{sec:combinatorial}.

\begin{Cor} \label{cor:theta_k_SOL}
	There exists a positive word $\nu$ over $k$ letters such that the following holds.
	Denoting by $c$ and $c_1, \dots, c_k$ suitable generators of  the fundamental groups of $\S^1$ 
	and of $\Cocoa$ respectively, the 
	map between fundamental groups induced by $\theta_k$ is given 
	by 
	\begin{align*}
		\theta_{k,*} : &\sg[c]\cong \Z \to \sg[c_1,\dots c_k] \cong F_k < \pi_1(\cone) \\
		&c \longmapsto \nu(c_1,\dots c_k)
	\end{align*}
	In particular, the loop $\theta_k$ is nontrivial in $\pi_1(\cone)$.
\end{Cor}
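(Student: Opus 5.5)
The plan is to combine the factorization $\theta_k=\beta_k\circ W_k$ of Proposition \ref{prop:image_of_theta_in_cone_is_a_wedge} with the dimension argument of Corollary \ref{cor:embedded_covdim_one}. By functoriality, $\theta_{k,*}=\beta_{k,*}\circ W_{k,*}$, so we treat the two factors separately. First, $\beta_k$ is a bi-Lipschitz embedding of the compact space $\Cocoa$, so its image is a closed subset of $\cone$ homeomorphic to $\Cocoa$. By Lemma \ref{lem:pi_1_cocoa_pod} this image is homotopically equivalent to $(\S^1)^{\vee k}$. Since $\cone$ has covering dimension $1$ (Theorem \ref{thm:dim_cone_SOL}), Corollary \ref{cor:embedded_covdim_one} shows that $\beta_{k,*}:\pi_1(\Cocoa)\to\pi_1(\cone)$ is injective. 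We can therefore identify $\pi_1(\Cocoa)\cong F_k$ with a subgroup of $\pi_1(\cone)$, as in the statement.

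Next we compute $W_{k,*}$. By Lemma \ref{lem:pi_1_cocoa_pod}, the homotopy equivalence to the wedge of circles collapses $b_0$ to the basepoint and sends $b_i$ to the $i$-th circle. The free generators are then $\gamma_i=b_i\overline{b_0}$, and any loop $b_i\overline{b_j}$ represents $\gamma_i\gamma_j^{-1}$, with $\gamma_0=1$. Since $W_k$ is represented by $w_k(A_i=(b_i\overline{b_{i+1}})^{(-1)^i})$, the loop $W_k$ represents $w_k(a_0,\dots,a_{k-1})$ with $a_i=\gamma_i\gamma_{i+1}^{-1}$ for $i$ even and $a_i=\gamma_{i+1}\gamma_i^{-1}$ for $i$ odd.

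The natural choice is to set $c_1=a_0^{-1}$, which equals $\gamma_1$ since $a_0=\gamma_0\gamma_1^{-1}=\gamma_1^{-1}$. Then $c_{i+1}=a_i$ for $1\le i\le k-1$. This change of generators from $(\gamma_1,\dots,\gamma_k)$ is unitriangular: $a_i$ involves only $\gamma_i$ and $\gamma_{i+1}$, and $\gamma_{i+1}$ appears with exponent $\pm1$. Hence $(a_0,\dots,a_{k-1})$ is a free basis of $F_k$, which can be checked by Nielsen moves or by induction on $i$.

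The remaining problem is the sign of $a_0$. With $c_1=a_0^{-1}$, the word $w_k$ becomes positive in the $c_i$ except at $c_1$. It is cleaner to avoid this and take $c_1\defeq a_0$, $c_{i+1}\defeq a_i$ directly, which is legitimate because these also form a free basis. Then $W_{k,*}(c)=w_k(c_1,\dots,c_k)$, where $c$ is the class of the identity loop, and we set $\nu\defeq w_k$. Here $w_k$ is a positive word in which $s_0,\dots,s_{k-1}$ all appear (Definition \ref{def:words_w}), of length $2^{k-1}\ge1$.

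A nonempty positive word is reduced and nontrivial in a free group. Hence $W_{k,*}(c)\neq 1$, and by injectivity of $\beta_{k,*}$, $\theta_{k,*}(c)\ne1$. The main obstacle is the first step: checking that the closed image of $\beta_k$, a bi-Lipschitz image of the compact space $\Cocoa$, meets the hypotheses of Corollary \ref{cor:embedded_covdim_one} for a wedge of $k$ circles, i.e.\ that Theorem \ref{thm:extension_towards_Earr_dim1} applies. The free-basis verification is routine.
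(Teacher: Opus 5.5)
Your proof is correct and follows the paper's route: you factor $\theta_{k,*}=\beta_{k,*}\circ W_{k,*}$, get injectivity of $\beta_{k,*}$ from Theorem \ref{thm:dim_cone_SOL} and Corollary \ref{cor:embedded_covdim_one}, and conclude from positivity of $\nu$. The only difference is the free basis. The paper takes the circle generators $c_i=(b_i\overline{b_0})^{(-1)^i}$, so that $a_0=c_1$, $a_i=c_ic_{i+1}$ or $c_{i+1}c_i$, and $\nu$ is $w_k$ evaluated on these positive words. Your choice $c_{i+1}=a_i$ gives $\nu=w_k$ directly, but it is less convenient later, since Corollary \ref{cor:Phi_pi_1_injective} uses that each $c_i^{(\ell)}$ is a single circle of the earring wedge.
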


\begin{proof} 
	First, the space $\Cocoa$ is homotopically equivalent to a finite wedge of circles (see Lemma 
	\ref{lem:pi_1_cocoa_pod}), so its fundamental group is a free group $F_k$ with a 
	system of generators given by
	\[
	c_i =  (b_i \overline{b_0})^{(-1)^i}, 1\leq i \leq k.
	\]
	By the $1$-dimensionality of $\cone$ (see Theorem \ref{thm:dim_cone_SOL}) and the fact that 
	wedge of circles are retracts up to homotopy in spaces of dimension $1$ (see Corollary 
	\ref{cor:embedded_covdim_one}), the map $\beta_{k,*}$ is an injection between fundamental 
	groups. Hence we can view $F_k =\langle c_1\dots c_k \rangle$ as a subgroup of $\pi_1(\cone)$.
	Now with this choice of generators, the map $\beta_{k,*}$ acts on 
	the loops $a_i = (b_i \overline{b_{i+1}})^{(-1)^i}$ (see Equation \ref{eq:a_i=b_i's}) by
	\[
		\beta_{k,*} (a_i) = \begin{cases}
								c_1, ~ i=0; \\
								c_i c_{i+1},~ i>0 \text{ even;} \\
								c_{i+1} c_i,~ i \text{ odd}. 
							\end{cases}.					
	\]
	
	Let $\nu$ be the word over $k$ symbols $s_1,\dots s_k$ defined by
	\[
		\nu(s_1,\dots,s_k) = w_k\left(
		A_i = \begin{cases}
			s_1, ~ i=0; \\
			s_i s_{i+1},~ i>0 \text{ even;} \\
			s_{i+1} s_i,~ i \text{ odd}. 
		\end{cases}
		, 0\leq i \leq k-1\right),
	\]
	which is positive since $w_k$ is positive (Definition \ref{def:words_w}) and evaluated on 
	positive words.
	Composing $\beta_{k,*}$ with the map $W_{k,*}$, which sends a suitable generator $c$ of 
	$\pi_1(\S^1)$ to the word $w_k(a_i)$, shows that $\theta_{k,*}$ satisfies
	\[
		\theta_{k,*}(c)= \beta_{k,*}\circ W_{k,*}(c) = \beta_{k,*}(w_k(a_i))=\nu(c_1,\dots,c_k).
	\]
	Positivity of $\nu$ implies that the map $\theta_{k,*}$ is injective, so the loop $\theta_k$  
	is nontrivial in $\pi_1(\cone)$. 
\end{proof}

From this we can get nontrivial loops in our group of interest $G$. Recall that by Corollary 
\ref{cor:words_induce_loops_in_cones}, if $k\geq 0$ is such that $\Rexp$ is $k$-step nilpotent, the 
path $\widetilde{\theta}_k$ in $\cone[G]$ is a loop that lifts $\theta_k$ through $p_{\omega}$. 
Since $\theta_k$ is nontrivial in $\pi_1(\cone)$, we get the following. 

\begin{Cor}
	For $k\geq 0$ such that the exponential radical of $G$ is $k$-step nilpotent, the lift 
	$\widetilde{\theta}_{k}$ is a 
	nontrivial loop in $\pi_1(\cone[G])$.
\end{Cor}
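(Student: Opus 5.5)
The plan is to argue by functoriality of $\pi_1$ along the commutative relation $p_{\omega}\circ\widetilde{\theta}_k=\theta_k$ given by Corollary \ref{cor:words_induce_loops_in_cones}. First, I fix $k$ with $H=\Rexp$ being $k$-step nilpotent. Lemma \ref{lem:theta_k_is_a_loop_in_G} then shows that each $\widetilde{\theta}_{k,n}$ is a closed combinatorial loop in $G$. Corollary \ref{cor:words_induce_loops_in_cones} shows that the ultralimit $\widetilde{\theta}_k:\S^1\to\cone[G]$ is a continuous loop lifting $\theta_k$. I also need the basepoints to be compatible: $\widetilde{\theta}_k$ is based at $[\1_G]$, and $p_\omega([\1_G])=[\1_{\SOL}]$, which is the basepoint of $\theta_k$. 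Hence $p_\omega$ induces a homomorphism $p_{\omega,*}:\pi_1(\cone[G],[\1_G])\to\pi_1(\cone,[\1_{\SOL}])$.

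Next, I take the class $[\widetilde{\theta}_k]$ and compute $p_{\omega,*}([\widetilde{\theta}_k])=[p_\omega\circ\widetilde{\theta}_k]=[\theta_k]$. By Corollary \ref{cor:theta_k_SOL}, $[\theta_k]=\nu(c_1,\dots,c_k)$ with $\nu$ a nonempty positive word in the free group $F_k<\pi_1(\cone)$, so this class is nontrivial. Suppose instead that $[\widetilde{\theta}_k]$ were trivial, i.e.\ that $\widetilde{\theta}_k$ extends to a map of the disk. Composing that extension with $p_\omega$ would show that $\theta_k$ is null-homotopic, which is a contradiction. Therefore $\widetilde{\theta}_k$ is nontrivial. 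In the same way one gets that $\langle[\widetilde{\theta}_k]\rangle\cong\Z$, because its image $\langle\nu\rangle$ is infinite cyclic.

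Two points need a little care. The first is that $\nu$ really is a nonempty word, which holds because $w_k$ has length $2^{k-1}\geq1$. The second is that $k\geq1$: if $H$ is $0$-step nilpotent we can still choose any $k\geq1$, since being $k_0$-step nilpotent implies being $k$-step nilpotent for every $k\geq k_0$. There is no serious obstacle here; all the difficulty is already contained in Corollary \ref{cor:theta_k_SOL}.
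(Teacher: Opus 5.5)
Your argument is correct and is the paper's own: $\widetilde{\theta}_k$ lifts $\theta_k$ through $p_\omega$ (Corollary \ref{cor:words_induce_loops_in_cones}), and $\theta_k$ is nontrivial by Corollary \ref{cor:theta_k_SOL}, so functoriality of $\pi_1$ forces $\widetilde{\theta}_k$ to be nontrivial. One small correction: the statement is about the given $k$, so you cannot dispose of $k=0$ by ``choosing'' a larger $k$; the case is simply vacuous, because $p(H)=\R^2$ by Lemma \ref{lem:image_exp_rad}, hence $H\neq 1$ and every admissible $k$ satisfies $k\geq 1$.
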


Finally, we get that $\cone[G]$ is not simply connected, for \emph{every} nonprincipal ultrafilter 
$\omega$.

\section{Changing the scale} \label{sec:change_scale}
In this section, we fix $k$ such that $\Rexp$ is $k$-step nilpotent, and denote
\[
	\theta=\theta_k=\lim_{\omega}\theta_{k,n}; \quad 
	\widetilde{\theta}=\widetilde{\theta}_k=\lim_{\omega} \widetilde{\theta}_{k,n},
\]
the path in $\cone$ and its lift in $\cone[G]$ given by Definition \ref{def:paths_theta}, which are 
both loops by Corollary \ref{cor:words_induce_loops_in_cones}.

We now iterate the construction of the previous sections while changing the scale. 
Let $\ell\geq 1$ be an integer. We define $\theta^{(\ell)}$ to be the loop in $\cone$ obtained by 
the 
process of evaluating the word $\Theta_{k}$ and taking the ultralimit, 
but replacing every occurrence of $t^{n}$ by $t^{n/\ell}$. Fractional powers are well defined here, 
since $t^{a}$ simply denotes the element $(0,0,a)$ in $\SOL$.

Put simply, we scale all loops by a factor of $1/\ell$.
This yields a loop of length roughly $1/\ell$ inside $\cone$, and similarly for its lift 
$\widetilde{\theta}^{(\ell)}$ in $\cone[G]$.
The loops $\theta^{(\ell)}$ for $l\geq 1$ accumulate at the point $\1_{\omega}$ at which they are 
all based. We aim to show that the union of their images is a subspace of $\cone$ homotopically 
equivalent to $\Earr$.

We follow the same outline as in Section \ref{sec:higher_order_loops} to highlight that the results 
and reasonings are similar, with the change-of-scale turning the circle into $\Earr$.

\subsection{Embedding the earring space into $\cone$}

We first introduce the subspace drawn by the loops $\theta^{(\ell)}$ in the 
asymptotic cone.

\begin{Def}
	We denote by $\SqEarr$, and call the \emph{square earrings}, the 
	subset of $\R^2$ defined as 
	the union of all squares of side length $\frac{1}{\ell}$ ($\ell\geq 1\in \N$), 
	with sides parallel to the axes and bottom-left corner at a given 
	basepoint; it is endowed with the natural graph metric.
	As a set, $\SqEarr = \bigcup_{l\ge 1}\partial ([0,1/\ell]^2).$
\end{Def} 

\begin{figure}
	\centering
	{\includegraphics[width=0.3\linewidth]{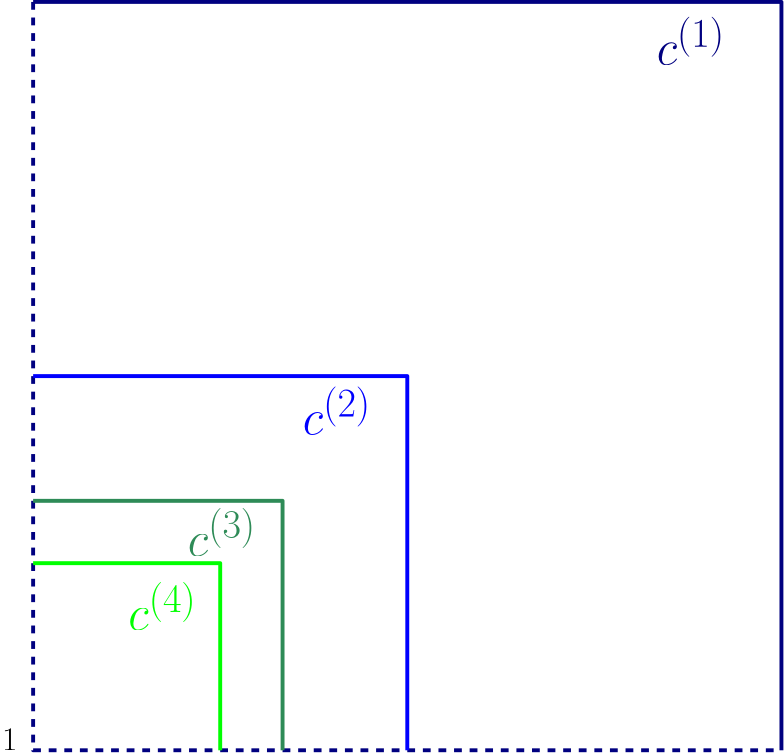}} 
	\qquad
	{\includegraphics[width=0.27\linewidth]{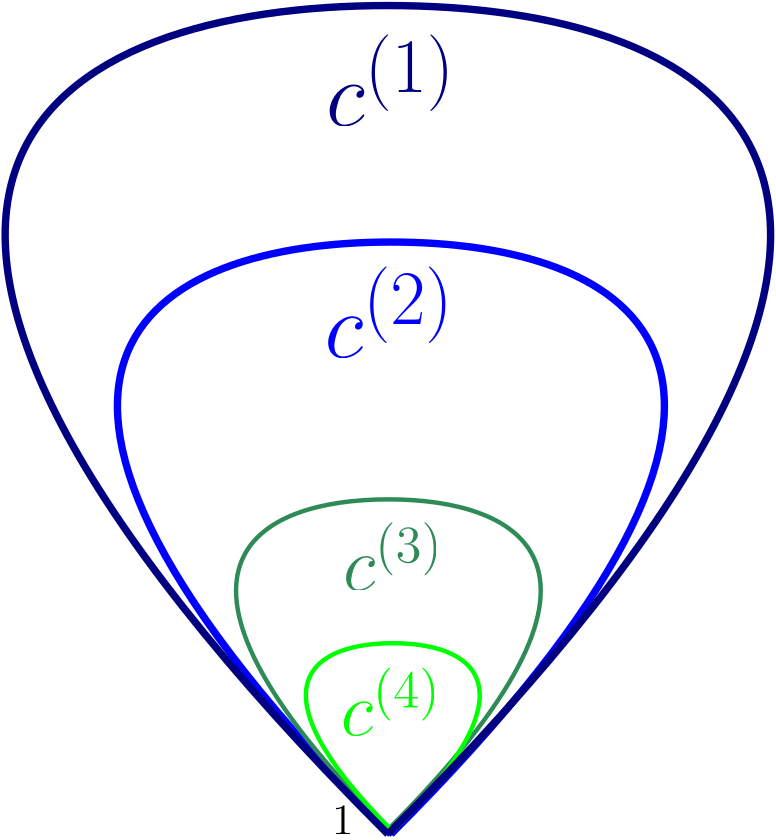}} 
	\caption{On the left, the square earring metric space $\SqEarr$. On the right, action 
	on the labels of an homotopy equivalence towards the usual earring space $\Earr$, contracting 
	the dashed sides.}
	\label{fig:SqEarr}
\end{figure}

See Figure \ref{fig:SqEarr} for a picture of $\SqEarr$.
This space is homotopically equivalent (but not homeomorphic) to $\Earr$, {via} a map 
contracting the bottom and left sides of every square onto the basepoint. 
Note that $\SqEarr$ can also be constructed as a quotient of $\Earr$ 
by gluing together the first and last quarters of consecutive circles.

We then have the following analog of Proposition 
\ref{prop:arcs_bj_embedding_circle} with the added change-of-scale, which is the key result leading 
to Theorem \ref{thm:technical_main}.

\begin{Prop} \label{prop:arcs_bj_change_of_scale_embedding_Earr}
	Let $i\neq i'$ be integers. The union of loops 
	$b_{i}^{(\ell)}\overline{b_{i'}^{(\ell)}}$ for $l\geq 1$ induces a bi-Lipschitz 
	embedding of $\SqEarr$ into $\cone$.
\end{Prop}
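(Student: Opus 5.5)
We define the map $\beta:\SqEarr\to\cone$ square by square and check the bi-Lipschitz estimate at the level of representatives in $\SOL$. For fixed $\ell$ and $n$, set $m=n/\ell$. The path $b^{(\ell)}_{i,n}=\lev y^it^{m}xt^{-m}y^{-i}\rev[\SOL]$ consists, up to bounded pieces, of four geodesic segments: a vertical segment $y^i\lev t^{m}\rev[]$, a descending segment from $y^it^{m}x$ to $x^{e^{m}}y^i$, and two short bounded pieces in $y$. The same holds for $b^{(\ell)}_{i'}$. Under the identification with the square of side $1/\ell$, each loop $b^{(\ell)}_{i,n}\overline{b^{(\ell)}_{i',n}}$ traces four sides of length $\simeq m$. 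By Proposition \ref{prop:arcs_bj_embedding_circle}, applied with $m$ in place of $n$, these loops are quasi-isometric embeddings of circles of length $\simeq n/\ell$. The constants are independent of $n$ and also of $\ell$, because the proof only uses the estimate of Proposition \ref{prop:dist_SOL} at parameter $m$. So each square embeds bi-Lipschitz with uniform constants. We define $\beta_n$ on the union of the squares with $\ell\le L$ for each finite $L$, then pass to the ultralimit. Since $\SqEarr$ is the closure of these finite unions together with the basepoint, and all the maps are uniformly Lipschitz, $\beta$ extends continuously to the whole space. The squares of side $\le 1/L$ have images of diameter $\preceq 1/L$ that accumulate at $\1_\omega$.

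The main work is the lower bound for points lying on different squares $\ell\neq \ell'$. Here the graph distance in $\SqEarr$ is the sum of the distances to the basepoint through the shared corner region. More precisely, squares share their bottom and left sides near the basepoint, which corresponds to $\theta$'s initial vertical segments $\lev t^{m}\rev[]$ and $y^i\lev t^m\rev[]$, which nest. I would therefore identify the images of the sides. The left sides of all squares are initial subsegments of the single geodesic ray $\sg[t]$; in the cone they have lengths $1/\ell$ and are genuinely nested. The bottom sides are subsegments of $y^{i'}\sg[t]$, or of $y^i\sg[t]$ depending on orientation, and are nested as well. The top and right sides differ with $\ell$.

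For a point $p$ on the top or right side of square $\ell$ and a point $p'$ on square $\ell'$, I would write explicit group elements, for instance $p=y^it^{m}xt^{-s}$ with $0\le s\le m$ and $p'=y^{i'}t^{m'}xt^{-s'}$, and compute $p^{-1}p'$ in the form $x^a y^b t^c$ using Remark \ref{rq:computing_length}. This gives $a\approx e^{s}-e^{s+m'-m}$, and the $y$-coordinate is of size $e^{\alpha s}$ times a bounded factor. The aim is to show $|p^{-1}p'|\simeq (m-s)+(m'-s')+\min$, matching the path distance through the corners. The key point is that $m\neq m'$ differ by at least $n/\ell\ell'$, so the $x$-coordinate $e^{s}-e^{s'+m'-m}$, or rather the corresponding $\log$-term, cannot collapse except when $s$ and $s'$ are near the bottom, which is exactly where the squares merge. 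I expect this case analysis to be the main obstacle: opposite-side pairs, top/top pairs and right/right pairs across different scales. Cancellation in $e^{a}-e^{b}$ has to be controlled by $\log(1+|e^a-e^b|)\simeq\max(a,b)$ once $|a-b|\ge 1$. This holds asymptotically, since $|m-m'|\to\infty$ linearly in $n$.

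Finally, a bi-Lipschitz bijection onto its image from a compact space is a homeomorphism onto a closed set. Combined with the uniform estimates above, this gives the bi-Lipschitz embedding of $\SqEarr$.
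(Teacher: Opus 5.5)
Your overall strategy is the paper's. You apply Proposition \ref{prop:arcs_bj_embedding_circle} at scale $m=n/\ell$ to get each square with constants independent of $\ell$. You observe that the only overlaps between squares are the nested left and bottom sides; these are initial pieces of $y^i\sg[t]$ and $y^{i'}\sg[t]$ respectively, not of $\sg[t]$ itself unless $i=0$. You then compare points on different squares by writing $p^{-1}p'$ in coordinates, where the only danger is cancellation in the $x$-coordinate $e^a-e^b$ with $a-b=m'-m$. Your treatment of that danger is right and amounts to the paper's control of the remainders $R$ and $R'$. For fixed $\ell\neq\ell'$ one has $|m-m'|\geq 1$ once $n\geq \ell\ell'$, so $\log(1+|e^a-e^b|)\geq \max(a,b)+\log(1-e^{-1})$, and the finitely many remaining $n$ are invisible to $\omega$.

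However, the one explicit computation you give is wrong exactly where the lower bound comes from. The case analysis, which is the actual content of the paper's proof, is also not carried out. For $p=y^it^mxt^{-s}$ on the top side of square $\ell$ and $p'=y^{i'}t^{m'}xt^{-s'}$ on the right side of square $\ell'$, one finds
\[
p^{-1}p'=\bigl(e^{s}(e^{m'-m}-1),\ (i'-i)e^{\alpha(m-s)},\ s-s'+m'-m\bigr).
\]
For two top sides, replace $i'$ by $i$. So the $y$-coordinate has size $e^{\alpha(m-s)}$, not $e^{\alpha s}$.

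This matters in the mixed case. There the graph distance is $\simeq\max(m,m')$, and the matching lower bound comes from adding the $x$-term $\simeq s+(m'-m)_+$ and the $y$-term $\simeq\alpha(m-s)$. With your $e^{\alpha s}$, taking $s=0$ and $m>m'$ leaves only $\simeq s'+m-m'$. That is not uniformly comparable to $m$ as $\ell'/\ell\to 1$, so the bi-Lipschitz constant would blow up.

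Your description of the target metric is also off. Distances in $\SqEarr$ between different squares do not pass through the basepoint, and the expression $(m-s)+(m'-s')+\min$ does not match any case. For example, two points at positions $s,s'$ on the top sides of squares $\ell,\ell'$ are at graph distance $s+s'+|m-m'|$, via the shared left side. This is what the coordinate estimate gives when $i=i'$.

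Your remark that collapse can only happen near the bottom, where the squares merge, is also misleading. Near $t=0$ the top and right sides approach the distinct corners $z_\ell\neq z_{\ell'}$. For $\ell\neq\ell'$ there is no collapse anywhere once $|m-m'|\ge1$.

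With these corrections your plan goes through and coincides with the paper's proof.
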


\begin{Rq}
	Notice the difference with Proposition \ref{prop:arcs_bj_embedding_circle}: 
	here the maps are not quasi-isometric embeddings 
	with uniform constants. Indeed, the {additive constants} found in the proof below depend 
	on the scale factor $\ell$. But crucially the 
	multiplicative constants do not and
	passing to asymptotic cones makes additive constants disappear,  so we still get bi-Lipschitz 
	embeddings between cones.
\end{Rq}

\begin{proof}
	First, recall from Proposition \ref{prop:dist_SOL} that we have the following estimate of 
	length in 
	$\SOL$, with constants depending only on the choice of metric:
	\[
	|(a,b,c)|_{\SOL} \simeq \log(1+|a|) + \log(1+|b|) + |c|.
	\]
	
	As in Section \ref{sec:higher_order_loops}, the point is to work with the evaluations in $\SOL$, meaning
	\[
		b_{j,n}^{(\ell)} = \lev B_j(U=t^{n/\ell}xt^{-(n/\ell)},Y=y) \rev[].
	\]
	 We apply Proposition \ref{prop:arcs_bj_embedding_circle} after substituting $n/\ell$ for $n$, 
	 which shows that the concatenation $b_{i}^{(\ell)}\overline{b_{i'}^{(\ell)}}$ is a 
	 quasi-isometric 
	 embedding of the circle of length $1/\ell$ in $\cone$ with constants independent of $n$ 
	 {and} $\ell$. 
	Hence the ultralimit loop $b_i^{(\ell)}\overline{b_{i'}^{(\ell)}}$ is a 
	bi-Lipschitz embedding of the circle of length $1/\ell$ in $\cone$ with 
	constants independent of $\ell$. 
	
	For $\ell \geq 1$, these loops are all based at the same point 
	$\1_{\omega}$ so they define a continuous map 
	\[
		\bigcup_{l \geq 1} b_i^{(\ell)}\overline{b_{i'}^{(\ell)}} : \Earr \to \cone
	\]
	that applies the $\ell$-th loop on the $\ell$-th circle of $\Earr$. 
	However, it is not injective: overlaps occur
	between the paths $b_j^{(\ell)}$ when $j \in \{i,i'\}$ and $\ell \geq 1$ vary. 
	When $j$, the exponent of $y$, is fixed, $b_j^{(\ell)}$ and $b_j^{(l')}$ share 
	their initial segment, 
	namely $\lim_{\omega} \lev  y^jt^{n/\ell'} \rev[\SOL]$ if $\ell \leq \ell'$; 
	and when the scale $\ell$ is fixed  all $b_j^{(\ell)}$'s share their endpoint 
	$z_l=\lim_{\omega}(t^{n/\ell}xt^{-(n/\ell)})$.
	
	Let us show that this is the only overlap occurring. More precisely, we claim 
	that the map defined above induces a bi-Lipschitz embedding into $\cone$ of the square 
	earrings space $\SqEarr$, endowed with the graph distance $d_g$.

	To achieve this, we use Proposition \ref{prop:dist_SOL} to compute 
	distances in $\cone$ between paths $b_j^{(\ell)}$ and $b_{j'}^{(\ell')}$ with 
	$j\leq j'$ and $\ell \leq \ell'$. We can assume that the scale factors satisfy $\ell< \ell'$, 
	the case 
	$\ell=\ell'$ without change-of-scale being already handled by Proposition 
	\ref{prop:arcs_bj_embedding_circle}. 
	Up to rescaling by $\ell$ both in $\SOL$ and in $\SqEarr$, we can also replace $\ell$ by 
	$1$ and all occurences of $1/\ell'$ by $\lambda =\ell/\ell'< 1$.
	
	Let us denote in this proof $\delta=b_j^{(1)}$ and $\delta'=b_{j'}^{(1/\lambda)}$, 
	and, for $n \geq 1$,
	\begin{align*}
		\delta_n= b_{j,n}^{(1)} = \lev y^j t^n x t^{-n} \rev[]; \qquad 
		\delta'_n=b_{j',n}^{(1/\lambda)} = \lev y^{j'} t^{\lambda n} x t^{-\lambda n} \rev[]
	\end{align*}
	the paths in $\SOL$ they are the ultralimit of. The path $\delta_n$ 
	consists, up to 
	parts of bounded length, of two segments $q$ and $r$ of length roughly $n$ 
	(``bounded'' and ``roughly'' depending here only on $j,j'$ and the choice 
	of metric, but not on $n$ and $\lambda$.) 
	Similarly, denote by $q'$ and $r'$ the 
	two segments of length roughly $\lambda n$ composing $\delta_n'$. 
	
	As in the proof of Proposition \ref{prop:arcs_bj_embedding_circle}, we consider points 
	$p$ on $\delta$ and $p'$ on $\delta_n'$ and compare their graph distance, when viewed on 
	$\SqEarr$, to their distance in $\SOL$. 
	
	\smallskip 
	\textbf{First case: $j=j'$ (same conjugate, changing the scale).} 
	
\begin{minipage}{.47\textwidth}
	Up to left translation by $y^j$, we can assume $y=0$. 
	The paths $\delta_n$ and $\delta'_n$ both share their initial segment 
	$q'=\lev t^{\lambda n} \rev[\SOL]$ which has length roughly $\lambda n$. They draw an 
	\texttt{F}-shaped subspace of $\SqEarr$, see Figure \ref{fig:arcs_in_sqearr}.
	If $p' \in q'$ then both $p$ and $p'$ lie on $\delta_n$ and the 
	estimate follows from Proposition \ref{prop:arcs_bj_embedding_circle}. 
	So assume $p'$ lies on the second segment $r'$, 
	say $p'=t^{\lambda n}x t^{ -k'}$ with $0 \leq k' \leq \lambda n$.
	We distinguish cases depending on the position of $p$.
\end{minipage}
\hfill
\begin{minipage}{.52\textwidth}
\centering
\includegraphics[width=.75\linewidth]{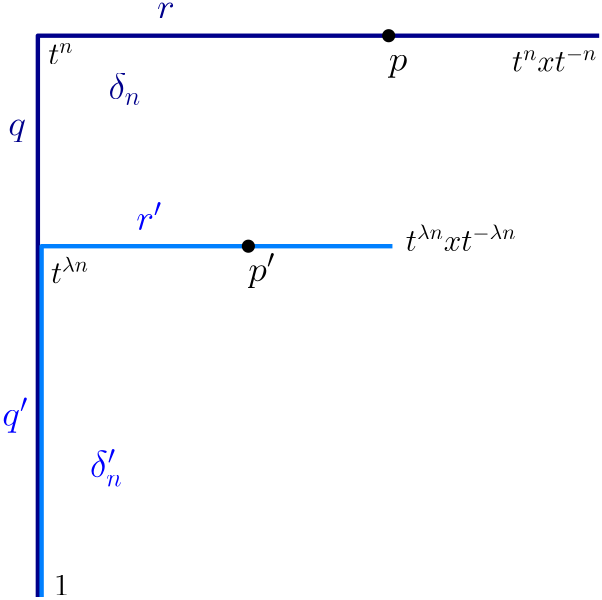}
\captionof{figure}{The paths $\delta_n$ and $\delta_n'$ inside $\SqEarr$ when $j=j'$. \\}
\label{fig:arcs_in_sqearr}

\end{minipage}

	\emph{(i) When $p$ lies on $q$.} If it lies on $q' \subset 
	q$, the computation follows from Proposition 
	\ref{prop:arcs_bj_embedding_circle} again, after substituting $\lambda n$ for $n$.
	So take $p=t^{\lambda n + k }$ lying on $q\setminus q'$, where $0\leq k \leq n(1-\lambda)$. 
	The graph distance between $p$ and $p'$ on $\SqEarr$ is roughly $k + 
	k'$. On the other hand, their distance in $\SOL$ is 
	\[
		d_{\SOL}(p,p') = d(t^{\lambda n +k}, t^{\lambda n}x t^{- k'}) = | 
		t^{-k} x t^{- k'} | \simeq k+ k',
	\]
	where the $t$-coordinate is responsible for roughly all the distance to the origin.

	\emph{(ii) When $p$ lies on $r$.} 
	Let now $p=t^nxt^{-k}$ lie on $r$, with $0\leq k \leq n$. The graph 
	distance between $p$ and $p'$ is $d_g(p,p') \simeq k + n(1-\lambda) + k'$, while the 
	distance in $\SOL$ is
	\begin{align*}
		d_{\SOL}(p,p') &= d(t^nxt^{-k}, t^{\lambda n} xt^{- k'})  \\
			&= | t^{ k'} x^{-1} t^{n-\lambda n} x t^{-k} |
			= |(x^{-1})^{( k')} x^{(n(1-\lambda )+ k')} t^{n(1-\lambda)+k'-k}|,
	\end{align*}
	where, if $z \in \SOL$, we denote by $z^{(u)}$ the conjugate of $z$ by $t^u$. 
	Going back to coordinates on $\SOL=\R^2\rtimes_{(1,-\alpha)} \R$ yields
	\begin{align*}
		d_{\SOL}(p,p') &= 
		\left|
			\left(e^{n(1-\lambda)+k'}-e^{k'}, 0, n(1-\lambda)+k'-k\right) 
		\right| \\
		&\simeq \log\left(1+e^{n(1-\lambda)+k'}-e^{k'}\right) 
		+ |n(1-\lambda)+k'-k|.
	\end{align*}
	The first term can be rewritten as 
	\[
		\log\left(1+e^{n(1-\lambda)+k'}-e^{k'}\right)
		=  \underbrace{n(1-\lambda)+k'}_{=d_g(p,p') - k} 
		+ \underbrace{\log\left(1 - e^{-n(1-\lambda)} 
			+ e^{-n(1-\lambda)-k'}\right).}_{R(\lambda,n,k')}
	\]
	By the estimate $u+|u-v| \simeq u+v$ 
	with $u=d_g-k$ and $v=k$, we have
	\[
		d(p,p') \simeq d_g - k + |(d_g-k)-k| + R(\lambda,n,k')
			\simeq d_g + R(\lambda,n,k'),
	\]
	where $d_g$ is the graph distance that we aim for
	and $R(\lambda,n,k')$ is a remainder that we wish to control.
	To do this, we introduce some dependence in $\lambda$. 
	See first that
	\[
		\log\left(1-e^{-n(1-\lambda)}+ e^{-n}\right)
		\leq 
		R(\lambda,n,k') = \log\left(1 - e^{-n(1-\lambda)} 
		+ e^{-n(1-\lambda)-k'}\right)
		\leq 0,
	\]
	 using the fact that $0\leq k'\leq \lambda n$. 
	 Now for fixed $\lambda$<1, the lower bound tends to $0$ when $n$ tends to 
	 infinity, so we can write 
	 $
	 	-O_{\lambda}(1) \leq R(\lambda,n,k') \leq 0,
	 $
	 where $O_{\lambda}(1)$ denotes some positive constant depending only on 
	 $\lambda$.
	Going back to distances in $\SOL$, the previous cases imply that there 
	exists a 
	constant $C_1>0$, uniform in all parameters, such that 
	\[
		\frac{1}{C_1}d_g(p,p') - O_{\lambda}(1) \leq d_{\SOL}(p,p') \leq C_1 d_g(p,p') + C_1,
	\]
	for all $p \in  \delta_n$ and $p' \in \delta_n'$. 
	\smallskip
	
	\textbf{Second case: $j<j'$ (changing both the conjugate and the scale).} 
	We treat the case $j=0$ and $j'=1$, the others being similar. Now $\delta_n$ and $\delta_n'$ 
	are on 
	opposite sides of $\SqEarr$ and do not intersect outside the basepoint. Again we split 
	cases according to the position of $p$.
	
	\emph{(i) When $p$ lies on $q$.} Write $p=t^k$ where $0\leq k \leq n$.
	Assume $p'$ lies on $q'$, say $p'=yt^{ k'}$ where $0\leq k'\leq \lambda n$. Then the 
	graph distance is roughly $k+k'$, while in $\SOL$
	\[
		d_{\SOL}(p,p') = | t^{-k} y t^{ k'} | = |y^{(-k)} t^{ k' -k} | 
		\simeq k + | k' - k| \simeq k+k'.
	\]
	If now $p'=yt^{\lambda n}x t^{ -k'}$ lies on $r'$, 
	then the graph distance is 
	$
		d_g \simeq k+ \lambda n + k',
	$
	while
	\[
		d_{\SOL}(p,p') = |t^{-k}yt^{\lambda n}x t^{-k'} | 
		= \left|y^{(-k)} x^{(\lambda n - k)}t^{\lambda n- k'-k} \right|.
	\]
	The element $y^{(-k)}=t^{-k}yt^k$ has length roughly $k$ in $\SOL$, 
	while $x^{(\lambda n - k)}$ has length roughly $\lambda n -k$ when 
	$\lambda n > k$ and is negligible otherwise. 
	Therefore, we have that
	\begin{align*}
		d(p,p') &\simeq 
		\underbrace{k + \max(\lambda n -k, 0 )}_{=\max(\lambda n, k) \simeq k + \lambda n} + 
		|\lambda n - k' - k| \\
		&\simeq k + \lambda n + |\lambda n - (k+ k') | \simeq 2k+ \lambda n + k',
	\end{align*}
	and this is indeed roughly $d_g$. 
	\smallskip
	
	\emph{(ii) When $p$ lies on $r$.} Write $p=t^nxt^{-k}$ with $0\leq k \leq n$.
	If $p'$ lies on $q'$ then in particular it lies on the translate path $y\delta_n$, 
	so no change-of-scale is needed and the 
	estimate follows from Proposition \ref{prop:arcs_bj_embedding_circle}.
	
	Finally, assume $p'=yt^{\lambda n}x t^{- k'}$ lies on $r'$ so that $d_g$ is 
	roughly $n$.
	In $\SOL$ we have
	\begin{align*}
		p^{-1}p' & =  
		(x^{-1})^{(k)} y^{(-n+k)} x^{(k+n(\lambda-1))} t^{(\lambda-1)n +k-k'} \\
		&= \left(-e^k(1-e^{n(\lambda-1)}), e^{\alpha(n-k)},(\lambda-1)n +k-k' \right),
	\end{align*}
	where in the last line we went back to coordinates. 
	Hence we get that 
	\[
		d(p,p') \simeq 
		\log \left(1 - e^{n(1-\lambda)+k} + e^k  \right)
		+n-k + |n(1-\lambda) + k' -k|. 
	\]
	The last term, coming from the $t$-coordinate, is roughly between $0$ and $n$. So factoring out 
	$e^k$ inside of the logarithm yields that
	$
		d(p,p') \simeq n + R'(\lambda,n,k)
	$
	with uniform constants, where $R'$ is a remainder which satisfies
	\[
		\log \left(1 - e^{-n(1-\lambda)}\right)
		\leq 
		R'(\lambda,n,k) = \log \left(1 - e^{-n(1-\lambda)} + e^{-k}\right) 
		\leq \log(2).
	\]
	As in the first case, for fixed $\lambda<1$ the lower bound tends to $0$ when $n$ tends to 
	infinity. 
	So we can have $ -O_{\lambda}(1) \leq R'(\lambda,n,k) \leq O_{\lambda}(1)$ with 
	$O_{\lambda}(1)$ denoting some positive constant depending only on $\lambda$. 
	
	The conclusion of all cases is that there exists a uniform constant $C>0$ such that, for $p \in 
	\delta_n$ and $p' \in \delta_n'$,
	\[
		\frac{1}{C}d_g(p,p') - O_{\lambda}(1) \leq d_{\SOL}(p,p') 
		\leq Cd_g(p,p') + O_{\lambda}(1).
	\]
	To go to the the asymptotic cone, rescale by $1/n$ and let $n$ tend to infinity along $\omega$. 
	This kills the additive constants, so that, for $p 
	\in \delta$ and $p' \in \delta'$, we have the following:
	\[
		\frac{1}{C}d_g(p,p') \leq d_{\omega}(p,p') \leq  C d_g(p,p').
	\]
	Now two points on the union
	$\bigcup_{l \geq 1} b_i^{(\ell)}\overline{b_{i'}^{(\ell)}}$
	lie on some $b_j^{(\ell)} \cup b_{j'}^{(\ell')}$, so this comparison shows that it 
	defines a bi-Lipschitz embedding of $\SqEarr$ as claimed.
\end{proof}

\begin{Rq}[On the dependence of constants]
	In this proof, note that $0<\lambda=\ell/\ell'<1$, but $\lambda$ can tend equally to 
	$0$ or to $1$ depending on $\ell$ and $\ell'$, hence the need to control the remainders $R$ and 
	$R'$ with functions of $\lambda$.
	Indeed, see that if $n$ and $k'$ are fixed while $\lambda$ tends to $1$ then $R(\lambda,n,k')$ 
	tends to $-k'$; so it cannot be bounded independently of all parameters.
	
	This is the manifestation of the worst-case scenario for the comparison between the graph 
	metric and the $\SOL$ metric. Namely, the endpoints of the paths $\delta_n$ and $\delta_n'$ 
	converge to the same point when 
	$\lambda$ tends to $1$ and $n$ is fixed, making their distance in $\SOL$ tend to $0$ while 
	their graph distance becomes roughly $2n$. But as we saw in the proof, this is solved by 
	letting first $n$ tend to infinity before varying the scale factor.
\end{Rq}

\subsection{Image of the rescaled loops $\theta^{(\ell)}$}
Now, we study all the loops $(\theta^{(\ell)})_{l \geq 0}$ together to get the following analog of 
Proposition \ref{prop:image_of_theta_in_cone_is_a_wedge}. 
Recall that $\nu$ is the positive word over $k$ letters from Corollary \ref{cor:theta_k_SOL}.

\begin{Prop} \label{prop:image_of_rescaled_loops}
	Let $\Phi : \Earr \to \cone$ be the map that applies the loop $\theta^{(\ell)}$ on the 
	$\ell$-th circle of $\Earr$. Then $\Phi$ is Lipschitz-continuous and has image homotopically 
	equivalent to $\Earr^{\vee k},$ a wedge of $k$ copies of $\Earr$ based at the singular point.
	
	More precisely, $\Phi$ factors as a composition $\beta \circ W$ 
	through $\Earr^{\vee k}$, where	
	\begin{itemize}
		\item $\beta : \Earr^{\vee k} \to \cone $ is a bi-Lipschitz embedding 
		(precomposed with a homotopy equivalence);
		\item $W: \Earr \to \Earr^{\vee k}$ is the continuous map 
		represented on the $\ell$-th circle $c^{(\ell)}$ by the word
		$
		 \nu(c_1^{(\ell)},\dots, c_k^{(\ell)}),
		$
		where $c_i^{(\ell)}$ denotes a suitable choice of orientation of the 
		$\ell$-th circle in the $i$-th copy  of $\Earr$, for $1\leq i \leq k$ and $1 \leq \ell$.
	\end{itemize} 
\end{Prop}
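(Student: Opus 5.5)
The plan follows the proof of Proposition \ref{prop:image_of_theta_in_cone_is_a_wedge}, carried out at all scales $\ell\geq 1$ simultaneously, with Proposition \ref{prop:arcs_bj_change_of_scale_embedding_Earr} replacing Proposition \ref{prop:arcs_bj_embedding_circle}.

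\textbf{Step 1: decomposition at each scale.} For each $\ell$, I evaluate Propositions \ref{prop:word_Theta_k} and \ref{prop:decomposition_A_in_B} on $U=t^{n/\ell}xt^{-n/\ell}$ and $Y=y$. The cancellations are only between powers of $y$ and have total length bounded independently of $n$ (Lemma \ref{lem:cancellation_length_wk}). Filiform vanishing (Corollary \ref{cor:filiform_vanishing}) then gives equalities of loops in $\cone$:
\[
\theta^{(\ell)}=w_k\bigl(a_0^{(\ell)},\dots,a_{k-1}^{(\ell)}\bigr), \qquad a_i^{(\ell)}=\bigl(b_i^{(\ell)}\overline{b_{i+1}^{(\ell)}}\bigr)^{(-1)^i}.
\]
The loop $\theta^{(\ell)}$ has length at most $C/\ell$ with $C$ independent of $\ell$, since its length is $2^{k+1}(n/\ell+1)$ before rescaling. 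So the map $\Phi$ is continuous at the wedge point, and it is Lipschitz for the length metric on $\Earr$ with circles of length about $1/\ell$.

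\textbf{Step 2: the embedding $\beta$.} I consider the space $X$ obtained as the union over $\ell$ of the cocoa pods $\Cocoa^{(\ell)}$ of size $1/\ell$. Arcs $b_j^{(\ell)}$ with the same $j$ share their initial segments, as in $\SqEarr$, and all arcs at a given scale share the endpoint $z_\ell$. I define $\beta$ on $X$ by sending the arc labelled $b_j^{(\ell)}$ to the ultralimit path $b_j^{(\ell)}$. To prove $\beta$ is bi-Lipschitz for the graph metric, I take two points on arcs $b_j^{(\ell)}$ and $b_{j'}^{(\ell')}$ with $j,j'\leq k$. The pair estimate of Proposition \ref{prop:arcs_bj_change_of_scale_embedding_Earr} (the cases $j=j'$ and $j\neq j'$, $\ell\le\ell'$) gives multiplicative constants depending only on $k$ and the metric, and additive constants $O_\lambda(1)$ that disappear in the cone. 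Since only finitely many $j$ occur, a single constant works for all pairs.

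Next, $X$ is homotopy equivalent to $\Earr^{\vee k}$. Collapsing every $b_0^{(\ell)}$ arc does this, as in Lemma \ref{lem:pi_1_cocoa_pod} applied levelwise. The arcs $b_j^{(\ell)}$ for fixed $j$ form a square-earring-like family, which is homotopy equivalent to $\Earr$ by the retraction of Figure \ref{fig:SqEarr}. I then take $\beta$ to be this embedding precomposed with a homotopy inverse. I would check that the collapse is a genuine homotopy equivalence: the collapsed subspace is a contractible "comb" of shared initial segments, and the sizes shrink to zero, so the homotopy is continuous at the basepoint.

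\textbf{Step 3: the map $W$.} I define $W:\Earr\to\Earr^{\vee k}$ on the $\ell$-th circle as the loop $\nu(c_1^{(\ell)},\dots,c_k^{(\ell)})$, where $c_i^{(\ell)}=(b_i^{(\ell)}\overline{b_0^{(\ell)}})^{(-1)^i}$, exactly as in Corollary \ref{cor:theta_k_SOL}. $W$ is continuous because the image of the $\ell$-th circle lies in circles of index $\ell$, which shrink to the basepoint. The equality $\Phi=\beta\circ W$ up to homotopy then follows levelwise from Step 1 and the computation of $\beta_{k,*}(a_i)$ in Corollary \ref{cor:theta_k_SOL}. This can be done by a reparametrization homotopy on each circle, of size $O(1/\ell)$, so the homotopies glue continuously.

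\textbf{Main obstacle.} I expect the hardest step to be the uniform bi-Lipschitz control across infinitely many scales in Step 2. Proposition \ref{prop:arcs_bj_change_of_scale_embedding_Earr} treats only two indices $i,i'$, so I would need to recheck that its case analysis covers every pair $(j,\ell),(j',\ell')$ with constants independent of $\ell,\ell'$. I would also need to make sure that no additional identifications occur between different $j$ at different scales. My first approach would be to redo the second-case computation with $y^{j'-j}$ in place of $y$, tracking the constants in terms of $|j'-j|\le k$ only.
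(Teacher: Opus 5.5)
Your proposal is correct and follows the paper's own argument. It factors $\Phi$ levelwise as in Proposition \ref{prop:image_of_theta_in_cone_is_a_wedge}. It gets uniform bi-Lipschitz control from Proposition \ref{prop:arcs_bj_change_of_scale_embedding_Earr}, since only the finitely many pairs $0\le i\neq i'\le k$ occur. It then collapses the $b_0^{(\ell)}$ arcs and the shared initial segments to reach $\Earr^{\vee k}$.

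Your ``main obstacle'' needs no recomputation. That proposition is already stated for arbitrary $i\neq i'$. Moreover, the graph metric of the whole union, restricted to the arcs of indices $i,i'$, coincides with that of the corresponding copy of $\SqEarr$: a detour through a third family of arcs can be rerouted at equal length, since all families are isometric relative to $\1_{\omega}$ and the points $z_\ell$. Your remark that $\Phi=\beta\circ W$ holds only up to a homotopy that glues continuously across scales is a fair sharpening of the paper's wording.
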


\begin{Rq}
	The wedge of $k$ copies of $\Earr$ based at the singular point is a countable union of 
	circles accumulating to a point, so it is in fact homeomorphic to $\Earr$. 
	This will be needed to obtain $\pi_1$-injectivity, but note that the map 
	$\Phi$ is not a homeomorphism, since the loop $\theta$ overlaps on itself. 
	This is the same phenomenon as in Proposition \ref{prop:image_of_theta_in_cone_is_a_wedge}, 
	and we do the same kind of factorization to get back to an embedding.
\end{Rq}
\begin{proof}
	The point is to put together Proposition \ref{prop:arcs_bj_change_of_scale_embedding_Earr} that 
	deals with changing the scale, and Proposition \ref{prop:image_of_theta_in_cone_is_a_wedge} 
	that tackles higher order commutators. 
	
	First, by Proposition \ref{prop:image_of_theta_in_cone_is_a_wedge}, the image in $\cone$ of 
	each loop $\theta^{(\ell)}=\theta_k^{(\ell)}$, for fixed $\ell \geq 1$, is 
	bi-Lipschitz-equivalent, with constants independent of $\ell$, to the rescaled cocoa pod 
	$\frac{1}{l}\Cocoa$ with arcs the paths $b_0^{(\ell)}, \dots, b_k^{(\ell)}$. In 
	particular, 
	the image of $\Phi$ is equal to the union of the images of the paths $b_i^{(\ell)}$ for $0\leq 
	i 
	\leq k$ and  $l\geq 0$.
	
	Next, by Proposition \ref{prop:arcs_bj_change_of_scale_embedding_Earr}, for fixed $i\neq i'$ 
	the loops $b_i^{(\ell)}\overline{b_{i'}^{(\ell)}}$ form a bi-Lipschitz copy of $\SqEarr$ inside 
	$\cone$ when $\ell$ varies. The Lipschitz constants depend on $i$ and $i'$, but in $\Phi$ only 
	$0\leq i,i' \leq k$ appear hence we get uniform constants.
	
	\begin{figure}
		\centering
		{\includegraphics[width=0.45\linewidth]{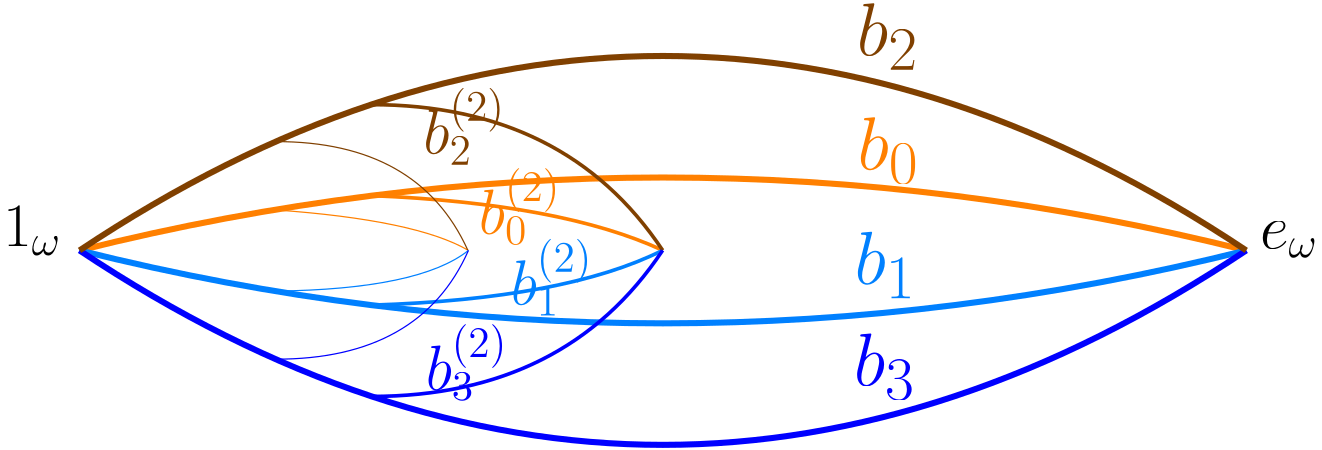}} 
		\quad
		{\includegraphics[width=0.23\linewidth]{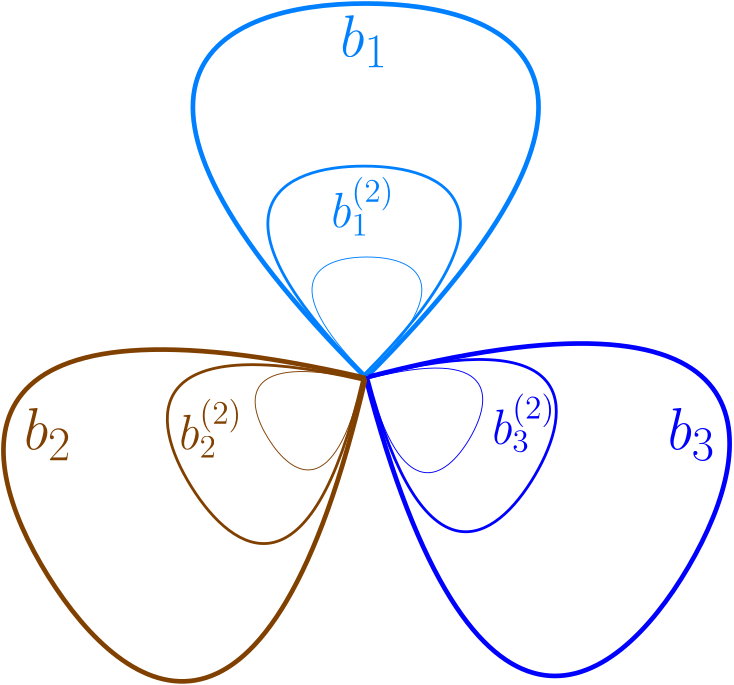}} 
		\captionof{figure}{On the left, union of the paths $b_{i}^{(\ell)}$ in $\cone$ for $0\leq i 
			\leq 3$ and $l\geq 0$, forming a square earring of cocoa pods with $4$ strands. On the 
			right, 
			action on the labels of an homotopy equivalence towards a wedge of $3$ earrings.}
		\label{fig:wedge_of_cocoa_pods}.
	\end{figure}	
	
	Together, these two facts yield that the image of $\Phi$ is bi-Lipschitz equivalent to a 
	``earring space of cocoa pods'', as in Figure \ref{fig:wedge_of_cocoa_pods}. 
	Putting together the homotopy equivalences between $\Cocoa$ and $(\S^1)^{\vee k}$ 
	(obtained by quotienting the segment $b_0$ to the basepoint) on the one hand, and between 
	$\SqEarr$ and $\Earr$ (obtained by quotienting the left and bottom sides of the 
	largest square to the basepoint)  on the other hand, shows that $\Phi(\Earr)$ is homotopically 
	equivalent to a wedge of $k$ copies of $\Earr$. 
	Moreover, we can choose this homotopy equivalence $f$ so 
	that the image of 
	$(b_i^{(\ell)}\overbar{b_0^{(\ell)}})^{(-1)^i}$ 
	inside $\pi_1(\Earr^{\vee k})$
	is a generator $c_i^{(\ell)}$ 
	of the $\ell$-th circle in the $i$-th copy of $\Earr$; 
	see Figure \ref{fig:wedge_of_cocoa_pods}.
	
	The map $\Phi$ itself is not an embedding, so we take the overlap into account and 
	factor it through $\Earr^{\vee k}$. As in the proof of Proposition 
	\ref{prop:image_of_theta_in_cone_is_a_wedge}, 
	we have the following commutative diagram.
	\[
	\begin{tikzcd}
		\Earr &&& \cone \\
		& {\Earr^{\vee k}} \\
		& {\Phi(\Earr)} 
		\arrow["{\Phi = \bigcup_{l\geq 0}\theta^{(\ell)}}", from=1-1, to=1-4]
		\arrow["{W}"', from=1-1, to=2-2]
		\arrow["{\beta}"', hook, from=2-2, to=1-4]
		\arrow["{f}","\sim" {rotate=90, anchor=north}, from=3-2, to=2-2]
	\end{tikzcd}
	\]
	Denote by $\beta$ the map that applies the 
	loop $(b_i^{(\ell)}\overline{b_0^{(\ell)}})^{(-1)^i}$ 
	on the circle $c_i^{(\ell)}$, so that $\beta$ is the composition of the bi-Lipschitz embedding 
	$\Phi(\Earr) \hookrightarrow \cone$ induced by the paths $b_i^{(\ell)}$ with a homotopic 
	inverse of the map $f:\Phi(\Earr) \to 
	\Earr^{\vee k}$.
	Now, using Proposition \ref{prop:image_of_theta_in_cone_is_a_wedge}, 
	$\Phi$ factors as $\Phi =  \beta \circ W$,
	where we see from Corollary \ref{cor:theta_k_SOL} 
	that $W$ is the map defined by sending $c^{(\ell)}$, the 
	$\ell$-th circle of $\Earr$, 
	to the positive word $\nu(c_1^{(\ell)}, \dots, c_k^{(\ell)})$.
\end{proof}

Recall that the loop $\theta^{(\ell)}$ in $\cone$ lifts, through the continuous surjection 
$p_{\omega} : \cone[G] \to \cone$, as a loop $\widetilde{\theta}^{(\ell)}$ in $\cone[G]$. 
Denote by $ \widetilde{\Phi} : \Earr \to \cone[G]$ the continuous map defined by applying
$\widetilde{\theta^{(\ell)}}$ on the $\ell$-th circle of $\Earr$. 
Then $\widetilde{\Phi}$ lifts $\Phi$ through $p_{\omega}$, meaning that 
$p_{\omega} \circ \widetilde{\Phi} =\Phi$.

\begin{Cor} \label{cor:Phi_pi_1_injective}
	The map $\Phi : \Earr \to \cone $ is $\pi_1$-injective. Hence so is its lift 
	$\widetilde{\Phi} : \Earr \to \cone[G]$. 
\end{Cor}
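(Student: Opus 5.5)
Using the factorization $\Phi=\beta\circ W$ from Proposition \ref{prop:image_of_rescaled_loops}, I will show separately that $\beta_*$ and $W_*$ are injective on $\pi_1$. Then $\Phi_*=\beta_*\circ W_*$ is injective. For the lift, $p_{\omega}\circ\widetilde{\Phi}=\Phi$ gives $p_{\omega,*}\circ\widetilde{\Phi}_*=\Phi_*$, and injectivity of $\Phi_*$ forces injectivity of $\widetilde{\Phi}_*$.

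\emph{Injectivity of $\beta_*$.} By construction, $\beta$ is the bi-Lipschitz embedding of $\Phi(\Earr)$ into $\cone$ precomposed with a homotopy inverse of $f$. It therefore suffices to see that the inclusion $\Phi(\Earr)\hookrightarrow\cone$ is $\pi_1$-injective. The image $\Phi(\Earr)$ is compact, being the image of a compact space under a continuous map, hence closed. It is homotopy equivalent to $\Earr^{\vee k}$, which is homeomorphic to $\Earr$ (a countable family of circles accumulating at one point). By Theorem \ref{thm:dim_cone_SOL}, $\cone$ has covering dimension $1$. Corollary \ref{cor:embedded_covdim_one} then applies: the closed subset $\Phi(\Earr)$ is a retract up to homotopy, so the inclusion is injective on $\pi_1$.

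\emph{Injectivity of $W_*$.} This is the main obstacle, and I would use the inverse-limit description of Theorem \ref{thm:description_pi_1_Earr}. I label the circles of $\Earr^{\vee k}$ by the countable alphabet $\{c_i^{(\ell)}\}$ and order them by any enumeration. Retracting onto the first $m$ scales then gives free groups $F^{(m)}$ on the letters $c_i^{(\ell)}$, $\ell\le m$, $1\le i\le k$. The retraction of $\Earr$ onto its first $m$ circles is compatible with $W$, because $W$ sends the $\ell$-th circle into the $\ell$-th scale only. Hence $W$ induces compatible homomorphisms $\psi_m:F_m\to F^{(m)}$ with $c^{(\ell)}\mapsto\nu(c_1^{(\ell)},\dots,c_k^{(\ell)})$. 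Since $\pi_1(\Earr)\to\varprojlim F_m$ is injective, it suffices to show that each $\psi_m$ is injective, which yields injectivity on the inverse limit and hence of $W_*$.

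The images $\nu_\ell=\nu(c^{(\ell)}_\bullet)$ are nonempty positive words over pairwise disjoint alphabets. A reduced word $g=(c^{(\ell_1)})^{e_1}\cdots(c^{(\ell_r)})^{e_r}$, with $\ell_j\neq\ell_{j+1}$ and $e_j\neq0$, therefore maps to $\nu_{\ell_1}^{e_1}\cdots\nu_{\ell_r}^{e_r}$. Each block $\nu_{\ell_j}^{e_j}$ is a nontrivial reduced word, positive or negative, in its own alphabet. Consecutive blocks use disjoint letters, so no cancellation occurs, and the image is nontrivial whenever $g\neq1$. Thus each $\psi_m$ is injective, which gives $\ker W_*=1$. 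I would double-check that the homotopy equivalence $f$ and the chosen generators $c_i^{(\ell)}$ match those used in this identification, so that the inverse-limit compatibility is genuinely preserved.
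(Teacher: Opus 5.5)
Your proposal is correct and follows the paper's proof. It factors $\Phi=\beta\circ W$, gets $\pi_1$-injectivity of $\beta$ from the covering dimension $1$ of $\cone$ via Corollary \ref{cor:embedded_covdim_one}, and gets injectivity of $W_*$ from the inverse-limit description of $\pi_1(\Earr)$, using scale-compatible retractions and positive words $\nu$ over pairwise disjoint alphabets. Beyond the paper, you also justify that $\Phi(\Earr)$ is closed (it is compact) and spell out the transfer to the lift via $p_{\omega,*}\circ\widetilde{\Phi}_*=\Phi_*$; the paper leaves both steps implicit.
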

As before, we obtain $\pi_1$-injectivity for both the geometric $\beta$ and the combinatorial $W$. 
The geometric part is dealt with using covering dimension. The combinatorial part uses first the 
positivity of the image of generators, and 
second the fact that there is no ``mixing of scales'' in the action of $\Phi$, meaning that it 
sends the generator 
of scale $1/\ell$ to a word over only the generators of scale $1/\ell$.

\begin{proof}
We make use of the factorization $\Phi=\beta \circ W$ from Proposition 
\ref{prop:image_of_rescaled_loops}.
Omitting homotopy equivalences, 
the map  $\beta$ is a bi-Lipschitz embedding of $\Earr^{\vee k}$ inside 
the asymptotic cone $\cone$.
Since $\Earr^{\vee k}$ is homeomorphic to $\Earr$ 
and homotopy copies of $\Earr$ are retracts up to homotopy inside spaces of covering dimension $1$ 
(Corollary \ref{cor:embedded_covdim_one}) 
such as $\cone$ (Theorem \ref{thm:dim_cone_SOL}), 
the map $\beta$ is 
$\pi_1$-injective.

So it is enough to check that the map 
$
	{W} : \Earr \to \Earr^{\vee k}
$ 
is $\pi_1$-injective to complete the proof.
For this, we make use of the fact that $\pi_1(\Earr)$ embeds as a subgroup of the projective limit of the free groups 
$F_n=\langle c^{(1)}, \dots, c^{(n)}  \rangle$
; see Theorem \ref{thm:description_pi_1_Earr}.
Fix a homeomorphism from $\Earr^{\vee k}$ to $\Earr$ that sends $c_i^{(\ell)}$, the 
$\ell$-th circle of the $i$-th copy of $\Earr$, to $c^{((\ell-1)k+i)}$. Then we get from the 
definition of $W$ in Proposition \ref{prop:image_of_theta_in_cone_is_a_wedge} that $W_*$ is the 
restriction to $\pi_1(\Earr)$ of the endomorphism
of $\varprojlim_{n \geq 1} F_n$ induced by the family of homomorphisms
\begin{align*}
	&\Omega_n : F_n \longrightarrow F_{nk} \\
	&c^{(\ell)} \mapsto \nu(c^{((\ell-1)k+1)}, \dots, c^{(\ell k)}).
\end{align*}

We claim that $\Omega_n$ is injective for all $n\geq 1$. First, the image of each generator is a 
\emph{positive} word in $F_{nk}$ since $\nu$ is positive, hence it is non-trivial.
Moreover, for $1 \leq \ell \leq n$, $\Omega_n$ sends the generator $c^{(\ell)}$ to  a word where 
only generators 
$c^{((\ell-1)k+1)}, \dots , c^{(\ell k)}$ occur -- this comes from the fact that $W$ sends the 
$\ell$-th circle of $\Earr$ to a word over only $\ell$-th circles of copies of $\Earr$.
As the intervals $[(\ell-1)k+1, \ell k]$ are disjoint when $\ell$ varies, the sets of 
generators appearing in the words $\Omega_n(c^{(l)})$ are 
disjoint when $l$ varies.
This implies that $\Omega_n$ sends reduced words to reduced words, hence it is injective.

Since this holds for all $n\geq 1$, the map $W_*$ is injective.
Coming back to $\Phi$, it factors as the composition of $\beta$ and $W$ which are both 
$\pi_1$-injective, so it is as well. Hence so is its lift $\widetilde{\Phi}$. 
\end{proof}

This concludes the proof of Theorem \ref{thm:technical_main}, hence of Theorem \ref{thm:main}. It 
only remains to deal with homology, with the added difficulty that we know no explicit description 
of the elements of $\Ho_1(\Earr)$ as there was for $\pi_1(\Earr)$. 

\begin{Prop}
	The map $\Phi_*: \Ho_1(\Earr) \to \Ho_1(\cone)$ is injective when restricted to a subgroup 
	isomorphic to $\Z^\N$. Hence so is $\widetilde{\Phi} : \Ho_1(\Earr) \to \Ho_1(\cone[G])$. 
\end{Prop}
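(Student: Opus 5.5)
We use the factorization $\Phi=\beta\circ W$ of Proposition \ref{prop:image_of_rescaled_loops}. We treat the geometric map $\beta$ and the combinatorial map $W$ separately.

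\textbf{The geometric part.} By Corollary \ref{cor:embedded_covdim_one} and Theorem \ref{thm:dim_cone_SOL}, the embedded copy of $\Earr^{\vee k}\cong\Earr$ is a retract up to homotopy of $\cone$. So $\beta_*$ is injective on all of $\Ho_1(\Earr^{\vee k})$, and it suffices to find a subgroup $Z\cong\Z^\N$ of $\Ho_1(\Earr)$ on which $W_*$ is injective. The statement for $\widetilde\Phi$ then follows from $p_\omega\circ\widetilde\Phi=\Phi$: if $\widetilde\Phi_*(z)=0$ then $\Phi_*(z)=p_{\omega,*}\widetilde\Phi_*(z)=0$.

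\textbf{Detecting classes by winding numbers.} Since we lack an explicit description of $\Ho_1(\Earr)$, we use winding-number homomorphisms. For each circle $c^{(m)}$ of $\Earr$, the retraction $r_m:\Earr\to c^{(m)}\cong\S^1$ that collapses the other circles gives $\deg_m=(r_m)_*:\Ho_1(\Earr)\to\Z$. Let $\deg:\Ho_1(\Earr)\to\Z^\N$ be the product of these maps. Define $Z$ as the image of the following elements. For $a=(a_\ell)\in\Z^\N$, let $\gamma_a$ be the infinite concatenation $\prod_\ell (c^{(\ell)})^{a_\ell}$, which is continuous since the circles shrink and each is used finitely often. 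Let $[\gamma_a]$ be its homology class. Concatenation is additive in homology, and this is compatible with the $\deg_m$. Since $\deg([\gamma_a])=a$, the map $a\mapsto[\gamma_a]$ composed with $\deg$ is the identity. So $a\mapsto[\gamma_a]$ is injective. One must check that it is a homomorphism, or else simply take $Z$ to be the subgroup it generates and restrict attention to $\deg$. The latter is cleaner: $\deg$ restricted to the subgroup generated by the $[\gamma_a]$ is onto $\Z^\N$ and split by $a\mapsto[\gamma_a]$ modulo $\ker\deg$. So I would rather define $Z$ to be any subgroup mapped isomorphically onto $\Z^\N$ by $\deg$, for instance using the known description of the $\deg$-image in the preliminaries' Remark on $\Ho_1(\Earr)$.

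\textbf{Injectivity of $W_*$ on $Z$.} Compute $\deg^{\vee k}\circ W_*$, where $\deg^{\vee k}:\Ho_1(\Earr^{\vee k})\to(\Z^{k})^\N$ records the winding numbers around the circles $c_i^{(\ell)}$. Composing the retraction onto $c_i^{(\ell)}$ with $W$ factors through the retraction of $\Earr$ onto $c^{(\ell)}$, because $W$ sends the $\ell$-th circle to a word in the $c_i^{(\ell)}$ only: there is no mixing of scales. On that circle the map has degree $N_i$, the number of occurrences of the letter $s_i$ in $\nu$. Hence
\[
\deg_{i,\ell}(W_*z)=N_i\,\deg_\ell(z).
\]
Since $\nu$ is positive and nonempty, some $N_i\ge 1$ (indeed $N_1\ge1$, as $A_0\mapsto s_1$ occurs in $w_k$). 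So $\deg(z)$ is recovered from $\deg(W_*z)$, and $W_*$ is injective on $Z$ because $\deg|_Z$ is injective.

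\textbf{Main obstacle.} The delicate step is making rigorous that the maps $\deg_m$ and the formula above really commute with the infinite concatenation $W$. I would verify it at the level of maps: $r_{i,\ell}\circ W$ equals the composite of $r_\ell$ with a degree-$N_i$ self-map of $\S^1$, up to homotopy. Both maps are the identity-collapse outside the $\ell$-th circle, so this is a finite check.
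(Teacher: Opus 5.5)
Your treatment of $\beta$, of the lift $\widetilde\Phi$, and of $W_*$ is sound. Computing $\deg^{\vee k}\circ W_*$ at the level of maps, where $r_{i,\ell}\circ W$ is exactly $r_\ell$ followed by a degree-$N_i$ self-map of the circle, is more robust than the paper's description of $W_*$ as a block-diagonal infinite matrix, which tacitly commutes $W_*$ with infinite products.

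The genuine gap is the existence of $Z$. Your argument needs a subgroup $Z\cong\Z^\N$ of $\Ho_1(\Earr)$ on which $\deg$ is injective, and you never produce one. Your worry about additivity is justified: $a\mapsto[\gamma_a]$ is \emph{not} a homomorphism. Take $\mathbf{1}=(1,1,\dots)$. Project the loop $\gamma_{\mathbf 1}^2\gamma_{2\mathbf 1}^{-1}$ to $F_{2m}$, then to $F_2=\langle a,b\rangle$ via $c^{(\ell)}\mapsto a$ for $\ell$ odd and $c^{(\ell)}\mapsto b$ for $\ell$ even. The result is $(ab)^{2m}(a^2b^2)^{-m}$. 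The homogenized Brooks quasimorphism counting $ab$ is at least $m-D$ on this element, where $D$ is its defect. So its commutator length grows linearly in $m$. But a product of $N$ commutators in $\pi_1(\Earr)$ projects to elements of commutator length at most $N$. Hence $2[\gamma_{\mathbf 1}]-[\gamma_{2\mathbf 1}]$ is a nonzero element of $\ker\deg$.

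Consequently your first fallback fails. On the subgroup generated by the $[\gamma_a]$, the map $\deg$ is not injective, so your final step ``$W_*$ is injective on $Z$ because $\deg|_Z$ is'' breaks. Your second fallback simply assumes that $\deg$ splits. That is true, but it is a real theorem, not a consequence of anything proved in the paper. A surjection onto $\Z^\N$ alone gives no copy of $\Z^\N$; free abelian groups surject onto $\Z^\N$ without containing it. The paper's own proof has the same weak point, since its section $s$ is exactly $a\mapsto[\gamma_a]$.

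To close the gap, invoke Eda--Kawamura's structure theorem for $\Ho_1(\Earr)$, the reference in the preliminaries' remark. It shows that $\ker\deg$ is algebraically compact. Since $\Z^\N$ is torsion-free, the extension $0\to\ker\deg\to\Ho_1(\Earr)\to\Z^\N\to 0$ is pure, hence splits. Take $Z$ to be the image of a splitting; the rest of your argument then goes through verbatim.

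There is also an alternative route. The letter $s_k$ occurs exactly once in $\nu$, so collapsing the circles $c_i^{(\ell)}$ with $i<k$ gives a map $R$ with $R\circ W\simeq\mathrm{id}_{\Earr}$. Then $W_*$ is injective on all of $\Ho_1(\Earr)$. One still needs Eda--Kawamura to know that $\Ho_1(\Earr)$ contains a copy of $\Z^\N$.
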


\begin{proof}
	Recall that $\Ho_1(\Earr)$ is the abelianization of $\pi_1(\Earr)$; denote by $e_{\ell}$ the 
	image 
	of the generator  $c^{(\ell)}$ in $\Ho_1(\Earr)$.
	The inclusion $\pi_1(\Earr) < \varprojlim F_n$ induces a surjective (but not injective) 
	morphism $\sigma : \Ho_1(\Earr) \to \Z^\N$ 
	which sends $e_{\ell}$ to the $\ell$-th basis vector in $\Z^\N$. 
	This map admits a section $s: \Z^\n \to \Ho_1(\Earr)$ defined by 
	\[
		s : (x_i)_{i \in {\N^*}} \mapsto 
		(e_1^{x_1} \dots e_n^{x_n})_{n\geq 1} \mod{[\pi_1(\Earr),\pi_1(\Earr)]},
	\]
	where the sequence $(e_1^{x_1} \dots e_n^{x_n})_{n\geq 1}$ denotes an element 
	of $\pi_1(\Earr)$ viewed inside $\varprojlim_{n}F_n$.
	
	We identify $\Z^\N$ with its image through $s$, and show that the map $\Phi_*$ is 
	injective in restriction to this subgroup. 
	Using Proposition \ref{prop:image_of_rescaled_loops} write $\Phi =  \beta \circ W$ where 
	$\beta$ is a bi-Lipschitz embedding of $\Earr^{\vee k}$ (omitting homotopy equivalences) and 
	$W$ sends $c^{(l)}$ to the loop $\nu(c_1^{(\ell)}, \dots, c_k^{(\ell)})$. 
	As before, homotopy copies of $\Earr$ are retracts up to homotopy inside spaces of covering 
	dimension $1$ (Corollary \ref{cor:embedded_covdim_one}), 
	so the map $\beta$ induces an injection in homology. 
	
	We claim that $W_*$ is injective on $\Z^\N$. Using positivity of 
	the word $\nu$ we see that $W_*:\Ho_1(\Earr) \to \Ho_1(\Earr^{\vee k})$ 
	sends, for $\ell \geq 1$, the element $e^{(\ell)}$ to  
	$\sum_{i=1}^k  v_{i,\ell} e^{(\ell -1)k+i}$, 
	where $v_{i,\ell}$ is a nonnegative integer and $(v_{1,\ell},\dots, v_{k,\ell})$ is nonzero. In 
	restriction to $\Z^N$ we may view $W_*$ as an infinite matrix indexed by $\N\times \N$, 
	where all columns are nonzero and have disjoint support. Indeed, the $\ell$-th column may have 
	nonzero coefficients only in lines $[(\ell-1).k+1,\dots, \ell.k]$. 
	So ${W_*}_{|\Z^\N}$ is injective.
	
	Hence $\Phi_* = \beta_* \circ W_*$ is injective on $\Z^\N$, 
	and so is its lift $\widetilde{\Phi}_*$.
	\end{proof}

This result yields the first part of Theorem \ref{thm:main_homology}. For homology over $\Q$, 
see that $\Z^\N$ is torsion-free and has continuum cardinality, so $\Z^\N \otimes \Q$ is a 
vector subspace of $\Ho_1(\Earr,\Q)$ of continuum dimension over $\Q$.
This concludes the proof of Theorem \ref{thm:main_homology}.

\bibliographystyle{amsalpha}
\bibliography{Bib-Pi1(Cone)_SOL.bib}

\end{document}